\documentclass[12pt]{amsart}
\usepackage{latexsym}
\usepackage{amsfonts}
\usepackage{amsmath, amssymb, amsthm,amscd,epsfig}
\usepackage{tikz}
\usepackage{diagrams}
\usepackage{mathrsfs}
\usetikzlibrary{arrows,backgrounds,decorations}

\setlength{\textwidth}{6.5in}
\setlength{\textheight}{8.6in}
\setlength{\oddsidemargin}{0in}
\setlength{\evensidemargin}{0in}

\newcommand{\Z}{\mathbb{Z}}
\newcommand{\R}{\mathbb{R}}
\newcommand{\C}{\mathbb{C}}

\newcommand{\st}{\, \mid \,}

\newcommand{\etale}{$\acute{\textrm{e}}$tale }

\newcommand{\cp}{\mathbb{C}}
\newcommand{\bh}{\mathcal{B}(\mathcal{H})}

\newcommand{\h}{\mathcal{H}}
\newcommand{\ho}{\overline{\mathcal{H}}}

\newcommand{\range}{Range}

\newcommand{\sor}{Source}

\newcommand{\ep}{\varepsilon}

\newcommand{\gs}{G^s(X,\varphi,Q)}
\newcommand{\gu}{G^u(X,\varphi,P)}
\newcommand{\gh}{G^h(X,\varphi)}

\newcommand{\tensor}{\otimes}

\newcommand{\D}{\mathfrak{D}}

\newcommand{\Q}{\mathbb{Q}}

\newcommand{\K}{\mathcal{K}}
\newcommand{\csa}{$C^*$-algebra}
\newcommand{\te}{\otimes}
\newcommand{\x}{\times}
\tikzstyle{vertex}=[circle,draw,fill=gray!20,thick]
\tikzstyle{goto}=[->,shorten >=1pt,>=stealth,semithick]
\DeclareMathOperator{\Ext}{\ensuremath{\mathop{Ext}}}
\DeclareMathOperator{\Hom}{\ensuremath{\mathop{Hom}}}
\DeclareMathOperator{\rank}{\ensuremath{\mathop{rank}}}
\DeclareMathOperator{\image}{\ensuremath{\mathop{image}}}
\DeclareMathOperator{\cok}{\ensuremath{\mathop{cokernel}}}

\newtheorem{defn}{Definition}[section]

\newtheorem{prop}[defn]{Proposition}
\newtheorem{cor}[defn]{Corollary}

\swapnumbers
\theoremstyle{plain}
\newtheorem{theorem}{Theorem}[section]
\newtheorem{lemma}[theorem]{Lemma}

\newtheorem{w-vn}[theorem]{Weyl - von Neumann Theorem}

\theoremstyle{definition}
\newtheorem{definition}[theorem]{Definition}

\theoremstyle{remark}
\newtheorem*{remark}{Remark}

\begin{document}
\title{K-Theoretic Duality for Hyperbolic Dynamical Systems}
\author{Jerome Kaminker}
\address{Department of Mathematical Sciences, IUPUI, Indianapolis,
IN} 
\curraddr{Department of Mathematics, UC Davis, Davis, CA 95616} 
\email{kaminker@math.ucdavis.edu}
\author{Ian F. Putnam$^1$}
\thanks{1. Research supported in part by an NSERC Discovery Grant}
\address{Department of Mathematics and Statistics, University of Victoria, Victoria, B.C., Canada V8W 3R4}
\email{putnam@math.uvic.ca}
\author{Michael F. Whittaker}
\address{Department of Mathematics and Statistics, University of Victoria, Victoria, B.C., Canada V8W 3R4}
\email{mfwhittaker@gmail.com}
\curraddr{Department of Mathematics, Wollongong University, Australia}
\subjclass[2000]{Primary{46L80};Secondary{37B10,37D40,46L80,46L35}}

\maketitle

\begin{abstract}
The K-theoretic analog of Spanier-Whitehead duality for noncommutative $C^*$-algebras is shown to hold for the Ruelle algebras associated to irreducible Smale spaces.  This had previously been proved only for shifts of finite type.  Implications of this result as well as relations to the Baum-Connes conjecture and other topics are also considered.
\end{abstract}

\section{Introduction}

The goal of this paper is to exhibit a duality between two \csa s  associated to a hyperbolic dynamical system. This is a noncommutative version of Spanier-Whitehead duality from topology.  It turns out that it is a special case of a type of duality which occurs in several different settings.  It will be  described carefully and we will indicate some of the different contexts in which it appears. 

Let us first briefly recall Spanier-Whitehead duality, a generalization of Alexander duality that relates the homology of a subspace of a sphere with the cohomology of its complement. Given a finite complex, $X {\subseteq} S^{n+1}$, consider the map 
\begin{equation}
\Delta : X \times (S^{n+1} \setminus X) \to S^n,                                                                                                                                                                                                                                                                  
                                                                                                                                                                                                                                                                \end{equation}
defined by $\Delta(x,y) = \frac{x-y}{\|x-y\|}$, where the algebraic operations take place in $S^{n+1} \setminus \{ {\text{north pole}}\} \cong \R^{n+1}$. Then one has an isomorphism 
\begin{equation}
 \Delta^* ([S^n]) / :\tilde H_{n-i}(X) \to H^{i} (S^{n+1} \setminus X),
\end{equation}
given by slant product.  This was generalized by Spanier and Whitehead to allow $S^{n+1}\setminus X$ to be replaced by a space of the homotopy type of a finite complex, $\D_n(X)$, for which the analog of the above relations hold.  It is in this form that these notions extend naturally to the noncommutative setting.  

In noncommutative topology, the roles of homology and cohomology are played by K-theory and K-homology.  These have been combined into a bivariant theory by Kasparov, for which one has K-theory, $K_*(A) = KK^*(\C,A)$, and K-homology, $K^*(A) = KK^*(A,\C)$. Kasparov's theory comes equipped with a product that is the analog of the slant product used above.  Of course, noncommutative \csa s play the role of algebras of continuous functions on ordinary spaces.  With this in hand, the generalization of Spanier-Whitehead duality to this setting is easily accomplished. 

Let $A$ be a \csa.  We will consider indices in K-theory to be modulo 2.  A \csa~$\D_n(A)$ is a Spanier-Whitehead n-dual of $A$, (or simply a ``dual`` when the context is clear) if there are duality classes $\delta \in K^n (A \te \D_n(A))$ and $\Delta \in K_n (A \te \D_n(A))$ such that the Kasparov products yields inverse isomorphisms,
\begin{equation}
\label{SWduality}
\begin{aligned}
\delta \te_{\D_n(A)} :K^{i}(\D_n(A)) \to K_{n-i}(A)\\
\te_A \Delta :K_{n-i}(A) \to K^{i}(\D_n(A)).
\end{aligned}
\end{equation}

Note that in the noncommutative case, given $\delta$, it is an additional condition to require the existence of $\Delta $, while in the commutative case this holds automatically \cite{Spa}.

 It is natural to compare this to the noncommutative version of K-theoretic Poincar\'e duality, as introduced by Connes. In general, these notions are different.   The main thing is that Poincar\'e duality relates K-theory and K-homology of the same algebra.  However, this is more restrictive than simply finding a dual algebra, $B$, whose K-theory is isomorphic to the K-homology of $A$.  Even in cases where one can choose an algebra to be its own Spanier-Whitehead dual, care must be taken.  A situation which indicates this is when the algebra is $A = C(S^1)$.  Since $S^3 \setminus S^1$ deformation retracts to $S^1$, we may take $C(S^1)$ itself to be a Spanier-Whitehead 2-dual.  The duality class $\Delta$ lies in $K^0(C(S^1) \te C(S^1))$ and the isomorphism from (\ref{SWduality}) provides isomorphisms $K_*(C(S^1)) \cong K^*(C(S^1))$.  However, $S^1$ is an odd-dimensional $Spin^c$ manifold, so it has a K-theory fundamental class which can be viewed as being in $K^1(C(S^1) \te C(S^1))$ and it provides the usual Poincar\'e duality isomorphisms $K_*(C(S^1)) \cong K^{*+1}(C(S^1))$.  
  
The main result of this paper exhibits this duality for the stable and unstable Ruelle algebras associated to a Smale space, (i.e. a compact space $X$ with a hyperbolic homeomorphism, $\varphi$). The stable and unstable sets for $\varphi$ provide $X$ with the structure of foliated space, in the sense of Moore-Schochet, in two different ways, which are transverse to each other.  The easiest example to visualize is the 2-dimensional torus, $T^2$ with the hyperbolic toral automorphism given by the matrix $ \varphi = [
\begin{smallmatrix}
1 & 1\\
1 & 0                                                                                                                                                                                                                                                                                                                                                                                                                                                                                   \end{smallmatrix}]
$.  The stable and unstable foliated space structures are the Kronecker flows for angles $\theta$ and $\theta^\prime$, respectively, where $\theta = \tan^{-1}(-\gamma)$, $\theta^\prime = \tan^{-1}(\gamma^{-1})$, and $\gamma$ is the golden mean $\gamma=(1+\sqrt{5})/2$.  One may now associate to these structures their Connes foliation algebras, which, in this example, are isomorphic to $A_{\theta} \te \K$ and $A_{\theta^\prime} \te \K$, respectively, where $A_{\theta}$ is the irrational rotation algebra.  These algebras are interesting invariants of the dynamics, and in the present case they happen to be isomorphic, although that is far from true in general.  They are simple algebras with a canonical (semi-finite) trace.   The homeomorphism, $\varphi$, induces an automorphism of each of these algebras, and one can take the associated crossed product algebras, $(A_{\theta} \te \K)\rtimes_{\varphi_*}\Z$ and $(A_{\theta^\prime} \te \K)\rtimes_{\varphi_*}\Z$.  One obtains in this way simple, purely, infinite \csa s.  They are special cases of algebras introduced by the second author in \cite{Put2} and are called the stable and unstable Ruelle algebras associated to a Smale space.  As a consequence of Elliott's classification program, they have the remarkable property of being determined up to isomorphism by their K-theory groups, \cite{Phi}.  It is these algebras which will be shown to be duals. %As will be shown, the existence of the duality element is related to the transversality of the stable and unstable foliations and the fact that it is an isomorphism is a consequence of the hyperbolicity of the dynamics. 

We briefly review the general dynamical setting for the duality. The necessary precise definitions will be presented in Section \ref{sec:Smale}. Let $(X,\varphi)$ be a Smale space. In later sections, we will impose some mild dynamical conditions, but they will be suppressed in the introduction.
% We usually assume that $(X,d,\varphi)$ is irreducible in a certain sense. 
Consider the groupoids given by the equivalence relations of being stably or unstably equivalent. In general, these are analogs of the holonomy groupoid of a foliation. They are locally compact groupoids which admit Haar systems, so that one may define their $C^\ast$ -algebras, $S(X,\varphi)$ and $U(X,\varphi)$. These are finite direct sums of simple $C^\ast$-algebras which are also separable, nuclear, and stable. They are often known
to be of the type to which Elliott's classification program (in the finite case) applies. They each have a densely defined trace and the map $\varphi$ induces automorphisms, $\alpha_{s}$ , $\alpha_{u}$ , on the algebras, which scale the trace by the logarithm
of the entropy of $\varphi$. As above, we now take the crossed products by these automorphisms to obtain the stable and unstable Ruelle algebras,
\[ R^s(X,\varphi) = S(X, \varphi )   \rtimes_{\alpha_{s}} \Z \quad , \quad R^u(X,\varphi) = U(X, \varphi)   \rtimes_{\alpha_{u}} \Z. \]
  These $C^\ast$-algebras are separable, simple, stable, nuclear, purely infinite, and satisfy the Universal Coefficient Theorem. Thus, according to the purely infinite case of Elliott's program, as developed by Kirchberg and Phillips, they are completely classified by their $K$-theory groups. It is interesting that these algebras, which arose from dynamics and duality theory, turn out to have remarkable properties as $C^\ast$-algebras.

The duality theorem shows that the stable and unstable Ruelle algebras are Spanier-Whitehead duals of each other.  This implies that the $K$-theory of $R^s(X,\varphi) $ is isomorphic, with a dimension shift, to the $K$-homology of $R^u(X, \varphi) $, and vice-versa. %Recall that $K$-theory is a covariant functor on the category of $C^\ast$-algebras and there is a dual contravariant theory called $K$-homology. These can be defined simultaneously using Kasparov's $KK$-theory so that, for a $C^\ast$-algebra $A$, one sets
%KK^\ast(A, \C)  & \cong K^\ast(A)  & = K-\text{homology}, \\
%KK^\ast(\C, A)  & \cong K_\ast(A)  & = K-\text{theory}.
%\end{eqnarray*}

\begin{theorem} \label{thm:main}
Let $(X,\varphi)$ be an irreducible Smale Space. There exists duality classes $\delta \in KK^1(\C,R^s(X,\varphi) \te R^u(X,\varphi)$ and $\Delta \in KK^1(R^s(X,\varphi)\te R^u(X,\varphi), \C)$ such that
\begin{eqnarray*}
\delta \otimes_{R^u(X,\varphi)} \Delta & \cong & 1_{R^s(X,\varphi)} \quad \textrm{ and } \\
\delta \otimes_{R^s(X,\varphi)} \Delta & \cong & -1_{R^u(X,\varphi)}.
\end{eqnarray*}
\end{theorem}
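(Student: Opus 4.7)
The plan is to construct the two duality classes $\delta$ and $\Delta$ directly from the geometry of a Smale space, namely the local product (Smale bracket) structure which exhibits the stable and unstable equivalence relations as mutually transverse, and then to verify the two Kasparov product identities. I would organize the argument into three stages: the construction of $\Delta$ (which is essentially a local/transversal object), the construction of $\delta$ (which is more global, and where I would import the known shift of finite type case), and the product computation.

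To build $\Delta \in KK^1(R^s(X,\varphi) \otimes R^u(X,\varphi),\C)$, I would use the fact that the bracket identifies a neighborhood of a point $x \in X$ with $V^s(x) \times V^u(x)$, so that $S(X,\varphi)$ and $U(X,\varphi)$ sit as commuting subalgebras of operators on an $L^2$ space attached to the homoclinic groupoid $G^h(X,\varphi)$. After twisting by an appropriate phase that intertwines the two $\Z$-actions induced by $\varphi$, the representation extends to $R^s \otimes R^u$ and yields an odd Fredholm module. The class $\Delta$ is then the associated element of $KK^1$. Compactness of commutators follows from the hyperbolic contraction/expansion built into the Smale space axioms, applied to functions supported on small product charts.

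To build $\delta \in KK^1(\C, R^s(X,\varphi) \otimes R^u(X,\varphi))$, I would lean on the duality already proved for shifts of finite type. Choose a Markov partition of $(X,\varphi)$; this gives a finite-to-one, almost one-to-one factor map $\pi:(\Sigma,\sigma) \to (X,\varphi)$ from an SFT. Such a $\pi$ induces correspondences between stable and unstable groupoids, and hence $KK$-morphisms between the respective Ruelle algebras. Transporting the SFT duality class $\delta_\Sigma$ through these morphisms yields a candidate $\delta$; independence of the Markov partition has to be checked separately, most easily by showing that $\delta$ is characterized by its Kasparov products with standard generators.

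The main obstacle, as always with this kind of theorem, is verifying the Kasparov product identities $\delta \otimes_{R^u} \Delta = 1_{R^s}$ and $\delta \otimes_{R^s} \Delta = -1_{R^u}$. My strategy would be to decouple the $\Z$-crossed product layer from the groupoid layer by means of the Pimsner--Voiculescu six-term exact sequence and the invariance of $\delta, \Delta$ under the dual actions, reducing the problem to a corresponding identity for the groupoid algebras $S(X,\varphi)$ and $U(X,\varphi)$; there the local product structure allows the computation to be carried out in a product chart, and naturality under the Markov factor map then reduces it to the already established SFT duality of Kaminker--Putnam. The sign in the second identity should come from graded commutativity of the Kasparov product of two odd classes, which accounts for the single $\Z$-factor in each Ruelle algebra appearing in opposite orders in the two compositions.
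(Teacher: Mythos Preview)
Your construction of $\Delta$ is in roughly the right spirit, though vague: in the paper the two Ruelle algebras are represented on $\mathcal{H}\otimes\ell^{2}(\Z)$ with $\mathcal{H}=\ell^{2}(X^{h}(P,Q))$ by very specific formulas, and the three lemmas showing that $S$ and $U$ commute modulo compacts, that $\alpha_{s}^{-n}(a)b\to 0$, and that $\alpha_{s}^{n}(a)b-b\alpha_{s}^{n}(a)\to 0$ are what make the extension exist. Your ``twisting by a phase'' does not capture this; the crossed-product layer is handled by explicit covariant representations, not by a phase.

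The substantive divergence, and the real gap, is in your treatment of $\delta$ and of the product. The paper does \emph{not} import $\delta$ from the SFT case via a Markov partition. It builds $\delta$ directly: an $\ep'_{X}$-partition $(\mathcal{F},G)$ yields an explicit projection $p_{G}\in S\otimes U$, a homotopy $p_{G}\sim p_{\varphi(G)}$ gives a partial isometry $v$, and $\varrho=(u\otimes u)p_{G}v^{*}$ defines the $*$-homomorphism $\mathscr{S}\to R^{S}\otimes R^{U}$. The product $\delta\otimes_{R^{U}}\Delta$ is then computed head-on: one writes the resulting extension concretely, composes with an explicit automorphism $\varTheta$ of $R^{S}\otimes\mathscr{S}$ homotopic to the identity, and then uses a partial isometry $W_{G}$ (built from the same partition) together with the asymptotic commutation lemmas to untwist the extension and identify it with $\tau^{R^{S}}(\mathcal{T}_{0})$, the Toeplitz class.

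Your proposed route has two genuine problems. First, a Markov factor map $\pi:(\Sigma,\sigma)\to(X,\varphi)$ is only almost one-to-one; it does not induce a $KK$-equivalence between the Ruelle algebras, so ``transporting $\delta_{\Sigma}$'' gives a class whose duality with the intrinsically defined $\Delta$ you would still have to verify by a computation of exactly the kind you are trying to avoid. The required naturality of both $\delta$ and $\Delta$ under such non-invertible factor maps is not available in the literature and is at least as hard as the direct argument. Second, the Pimsner--Voiculescu sequence gives an exact sequence in $K$-theory, not a mechanism for decomposing a Kasparov product in $KK^{1}(R^{S}\otimes\mathscr{S},R^{S})$; there is no ``decoupling'' that reduces the identity $\delta\otimes_{R^{U}}\Delta=1_{R^{S}}$ to a statement about $S$ and $U$ alone, and indeed no prior duality for $S$ and $U$ is known or used. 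The sign in the second identity is not a formal graded-commutativity artifact either; it emerges from the same explicit untwisting with the roles of $S$ and $U$ reversed.
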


% \begin{theorem} \label{thm:main}
% Let $(X,\varphi)$ be an irreducible Smale Space. There exists duality classes $\delta$ in $KK^1(\C,S \rtimes_{\alpha_{s}} \Z \otimes U \rtimes_{\alpha_{u}} \Z)$ and $\Delta$ in $KK^1(S \rtimes_{\alpha_{s}} \Z \otimes U \rtimes_{\alpha_{u}} \Z, \C)$ such that
% \begin{eqnarray*}
% \delta \otimes_{U \rtimes_{\alpha_{u}} \Z} \Delta & \cong & 1_{S \rtimes_{\alpha_{s}} \Z} \quad \textrm{ and } \\
% \delta \otimes_{S \rtimes_{\alpha_{s}} \Z} \Delta & \cong & -1_{U \rtimes_{\alpha_{u}} \Z}.
% \end{eqnarray*}
% \end{theorem}

Taking the Kasparov product with $\delta$ and $\Delta$ yields inverse isomorphisms
\begin{diagram}
K_\ast(R^s(X,\varphi) & \rTo^{\Delta \te_{R^s(X,\varphi)}} & K^{\ast+1}(R^u(X,\varphi))  \\
K^\ast(R^u(X,\varphi)) & \rTo{\delta \te_{R^u(X,\varphi)}} & K_{\ast+1}(R^s(X,\varphi)).
\end{diagram}

% \textsl{{{\it{\textbf{This paragraph is now at least true and hints at a point worth making, but I'm not sure how to carry this out yet.}}}
% There is an aspect of this duality which is related to the Fourier transform and its generalizations.  For example, a (topological) K-theory version of Fourier-Mukai duality can be viewed as the statement that the Poincar\'e line bundle on $T^2$, $\mathcal P \in K^0(T^2)$, is a duality class which fits in the diagram.  We identify $B\Z$ with $S^1$ and $C^*_r(\Z)$ with $C(S^1)$ and .
% \begin{diagram}
%  K_*(B\Z) = K^*(C(S^1))           &  \rTo{\mu}           & K_*(C^*_r(\Z)) = K_*(C(S^1)) \\
%  \uTo{\mathcal P \te_{C(S^1)}}                         &  \ruTo{\text{Mukai}}                 &                     \\
%  K_*(C(S^1))                      &  &                          \\
% \end{diagram}}}

We will now describe some of the background for our results. In \cite{Put1}, the second author introduced the algebras studied in the present paper.  They were based on constructions of algebras due to Ruelle in \cite{Rue2}, although the crossed product by the automorphism induced by $\varphi$ was not used there.  Further properties of these algebras were presented in \cite{PS}.  In \cite{KaPu1}, the first two authors worked out a special case of the duality theory when the Smale space was a subshift of finite type. In this case, the Ruelle algebras were known to be isomorphic to stabilized Cuntz-Krieger algebras. Specifically, suppose that  the Smale space was the shift of finite type
associated with a non-negative integral, irreducible matrix, $A$. Denote the dynamical system by $(X_{A} , \varphi_{A} )$. It follows that $R^s(X_{A} , \varphi_{A}) \cong O_{A^{t}} \otimes \mathcal{K}$ and  $R^u(X_{A} , \varphi_{A})\cong O_{A} \otimes \mathcal{K}$.
Following work of D. Evans \cite{Evans} and D. Voiculescu \cite{Voi}, one  considers the full Fock space of a finite dimensional Hilbert space and the associated creation and annihilation operators.  One compresses them to a subspace determined by the matrix $A$ and generates a \csa, $\mathcal{E}$. This    algebra contains the compact operators and one obtains an extension,
\begin{diagram}
0 & \rTo &  \mathcal{K} & \rTo & \mathcal{E} & \rTo & O_{A} \otimes O_{A^{t}} & \rTo & 0.
\end{diagram}
The extension determines an element $\Delta$ in   $K^{1} (O_{A} \otimes O_{A^{t}} )$ and it was shown in \cite{KaPu1} that it induces the required duality isomorphism.  J. Zacharias and I. Popescu \cite{PZ} have extended this type of duality theory to higher rank graph algebras.

There are other sources for examples of duality.  One type is based on an amenable action of a hyperbolic group, $\Gamma$,  on a compact space, $X$.  In this setting, the crossed product algebra, $C(X) \rtimes \Gamma$ can often be shown to be its own dual, as in Poincar\'e duality.  Since the duality presented in this paper is based on transversality coming from hyperbolic dynamics, it is natural to ask why this occurs.  The underlying idea is that the action of $\Gamma$ can be recoded so it has the same orbit structure as a single hyperbolic transformation.  Using this principle and the results in \cite{BowSer}, J. Spielberg showed that for certain Fuchsian groups acting on their boundary, the crossed product algebra was isomorphic to a Cuntz-Krieger algebra $O_A$.  This was extended by Laca and Spielberg, \cite{LS}, (see also C. Anantharaman-Delaroche \cite{Ana}) to show that the crossed product, $C(S^1)\rtimes SL(2,\Z)$ is isomorphic to a Ruelle algebra. One may ask how general a phenomenon this is.  According to Connes-Feldman-Weiss, \cite{CFW},  an amenable action is orbit equivalent, in the measure theoretic context,  to the action of a single transformation.  In the cases at hand, one gets much more than a orbit equivalence in the measure category, and the map it induces provides isomorphisms of the crossed product algebras with Ruelle algebras of associated hyperbolic dynamical systems.  It would be interesting to have a theory intermediate between measure theoretic orbit equivalence, which is naturally associated to von Neumann algebras, and topological flow equivalence, which is related to \csa s, where an equivalence would induce isomorphisms of Ruelle type algebras.

This line was pursued further by H. Emerson, who, without assuming that such crossed products are related to hyperbolic dynamical systems, was able to show   that if one takes $\Gamma$ to be a Gromov hyperbolic group, then there is an extension, analogous to the one above, whose $K$-homology class yields an isomorphism  $K^\ast(C(\partial \Gamma) \times \Gamma ) \cong K_{\ast+1}(C(\partial \Gamma) \times \Gamma )$ \cite{Eme2}. We will describe in Section \ref{sec:dual} how this clarifies a relation between the dynamical duality of the present paper and the Baum-Connes map for hyperbolic groups.

Building on ideas like these, the duality theory for general Smale spaces has played a role in the work of V. Nekrashevych \cite{Nek} on providing more precision to Sullivan's dictionary relating the dynamics of rational maps to that of Kleinian groups. Starting with a rational map, $f: \C \to \C$ , suitably restricted,  Nekrashevych constructs a self-similar group $\Gamma_f$, the iterated monodromy group of $f$ . This group has a limit set, ${\Lambda(\Gamma_f)}$, which admits a self map, $\lambda_f$, so that the pair $({\Lambda(\Gamma_f)}, \lambda_f)$ is topologically conjugate to $(f,\text{Julia}(f))$, the latter being $f$ restricted to its Julia set. He shows that the inverse limit $\underleftarrow{\lim} \{\Lambda(\Gamma_f), \lambda_f\}$ is a Smale space. Thus, one may study the stable and unstable Ruelle algebras and their duality in this context. Nekrashevych establishes that the unstable Ruelle algebra is Morita equivalent to an algebra associated to the iterated monodromy group, while the stable Ruelle algebra is Morita equivalent to the Deaconu-Renault algebra \cite{DM,Ren2}  associated to the rational map, $f$. One  may thus view the dynamical duality between the Ruelle algebras as relating the expanding dynamics of $f$ with the contracting dynamics associated to the action of the self-similar group on its limit set. To get closer to Sullivan's program, it would remain to relate the latter with the action of a Kleinian group on its limit set.  If the respective algebras are simple and purely infinite, as expected, then finding a Kleinian group whose algebras have the same K-theory as the self-similar group would provide support for Sullivan's dictionary.

The structure of the paper is as follows. Section 2 is an introduction to Smale spaces. We provide here more details on the technical aspects needed for later proofs. In Sections 3, we review the construction of the $C^\ast$-algebras associated to Smale spaces. Section 4 covers the $K$-theoretic duality we will use and describes various contexts where it arises naturally. In Section 5, the K-theory duality element is constructed. This is essentially a consequence of the transversality of the stable and unstable equivalence relations, and can be constructed in more generality. Indeed, the Mishchenko line bundle, used in the Baum-Connes assembly map, can be obtained this way. Section 6 is devoted to construction of the K-homology duality class, which provides the inverse on K-theory. Here, the hyperbolic nature of the dynamics plays a crucial role. Thus, it appears that the K-theory duality class exists in reasonable generality, but the existence of an inverse requires additional structure. This is precisely analogous to the difficulty in finding an inverse to the Baum-Connes assembly map using the Dirac-dual Dirac method. In Section 7, the proof of the main theorem will be completed and, finally, in Section 8 we will discuss open questions and possible extensions of the theory.

% Since the Ruelle algebras satisfy the Universal Coefficient Theorem, this puts restrictions on their $K$-theory groups. For example, if the $K$-groups are finitely generated, then $\text{rank}(K_{0}(S \rtimes_{\alpha_{s}} \Z)) = \text{rank}(K_{1} (S \rtimes_{\alpha_{s}} \Z))$, and similarly for $U \rtimes_{\alpha_{u}} \Z$.
\section{Smale spaces}
\label{sec:Smale}

In this section, we provide a brief introduction to Smale spaces. The reader is also referred to \cite{Put1,Rue1}, but we will try to keep our treatment self-contained.

We assume that $(X, d)$ is a compact metric space and that $\varphi$ is a homeomorphism. The main gist of the definition is that, locally at a point $x$, $X$ is homeomorphic to the product of two subsets, denoted $X^{s}(x, \ep)$ and $X^{u}(x, \ep)$ and on these, the maps $\varphi$ and $\varphi^{-1}$, respectively, are contracting. For the uninitiated, it may be best to begin reading from Figure \ref{bracket_fig} to Definition \ref{defn:Smale_space} and then work backwards to the more rigorous aspects below.

 We assume the existence of constants, $ \ep_{X} > 0,  \lambda > 1$, and a map
\[ (x,y) \in X, d(x,y) \leq \ep_{X} \mapsto [x, y] \in X \]
satisfying a number of conditions. First, $[,]$ is jointly continuous on its domain of definition. Also, it satisfies
\begin{eqnarray*}
  \left[ x, x \right]     & =  & x, \\
  \left[ x, [ y, z] \right] &  = &  [ x, z],  \\
  \left[ [ x, y], z \right]  & = &  [ x,z ], \\
  \varphi[x, y] & =  & [ \varphi(x), \varphi(y)],
\end{eqnarray*}
for any $x, y, z$ in $X$, where both sides of the equality are defined. It follows easily from these axioms that $[x,y] = x$ if and only if $[y,x] =y$ and $[x,y] =y$ if and only if $[y,x] =x$. We define, for each $x$ in $X$ and $ 0 < \ep \leq \ep_{X}$, sets
\begin{eqnarray*}
 X^{s}(x, \ep) & = & \{ y \in X \mid d(x,y) \leq \ep, [y,x] =x \}, \\ 
X^{u}(x, \ep) & = & \{ y \in X \mid d(x,y) \leq \ep, [x,y] =x \}.
\end{eqnarray*}
It follows easily from the axioms that the map
\[ [,] : X^{u}(x, \ep) \times X^{s}(x, \ep) \rightarrow X \]
is a homeomorphism to its image, which is a neighbourhood of $x$ in $X$, provided $\ep \leq \ep_{X}/2$. The inverse map sends a point $z$ close to $x$ to the pair $( [z, x], [x, z])$. Moreover, as we vary $\ep$, the images form a neighbourhood base for the topology at $x$. To summarize, $X$ has a local product structure. We give a proof of the following simple result for completeness and because we will use it later in an essential way.

\begin{lemma} \label{lemma:bracket}
Given two points $x, y$ in $X$ and  $0 < \ep \leq \ep_{X}/2$, if the intersection $X^{s}(x, \ep) \cap X^{u}(y, \ep)$ is non-empty, then it is the single point $[x, y]$.
\end{lemma}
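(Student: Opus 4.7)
The plan is to show that any point $z$ lying in the intersection $X^{s}(x,\ep) \cap X^{u}(y,\ep)$ must in fact equal $[x,y]$, which simultaneously gives existence and uniqueness. First, I would verify that $[x,y]$ is even defined: the triangle inequality together with $d(x,z), d(y,z) \leq \ep$ and the hypothesis $\ep \leq \ep_X/2$ yields $d(x,y) \leq 2\ep \leq \ep_X$, so $[x,y]$ makes sense.

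Next, I would unpack the membership conditions. By definition, $z \in X^{s}(x,\ep)$ gives the identity $[z,x] = x$, and $z \in X^{u}(y,\ep)$ gives $[y,z] = y$. The strategy is then to substitute these two identities into the expression $[x,y]$ and apply the ``associativity'' axioms $[x,[y,z]] = [x,z]$ and $[[x,y],z] = [x,z]$ to collapse the result down to $z$.

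Concretely, using $[y,z] = y$ on the second slot of $[x,y]$, the first axiom gives
\[
[x,y] = [x,[y,z]] = [x,z].
\]
Using $[z,x] = x$ on the first slot of $[x,z]$, the second axiom together with $[z,z] = z$ gives
\[
[x,z] = [[z,x],z] = [z,z] = z.
\]
Chaining these yields $z = [x,y]$, so every point of the intersection must equal $[x,y]$, and in particular the intersection is either empty or the singleton $\{[x,y]\}$, as claimed.

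There is essentially no obstacle beyond bookkeeping: the only place one must be careful is matching the asymmetric conditions ($[z,x]=x$ on one side, $[y,z]=y$ on the other) with the correct associativity axiom, since swapping the roles of $s$ and $u$ would land one on the wrong form. No metric or contraction properties of $\varphi$ are needed; the lemma is purely a consequence of the algebraic axioms governing $[\,,\,]$ together with the domain condition $\ep \leq \ep_X/2$.
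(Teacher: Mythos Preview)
Your proof is correct and essentially identical to the paper's own argument: both extract $[z,x]=x$ and $[y,z]=y$ from the membership conditions, use the triangle inequality to ensure $[x,y]$ is defined, and then collapse $[x,y]$ to $z$ via the two associativity axioms and $[z,z]=z$. The only cosmetic difference is that the paper substitutes both identities simultaneously, writing $[x,y]=[[z,x],[y,z]]=[[z,x],z]=[z,z]=z$, whereas you do the substitutions one at a time.
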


 \begin{proof}
Suppose that $z$ is in the intersection. This means that $[z, x]= x$ and $[y, z]=y$. It also means that $d(x, y) \leq d(x, z) + d(z, y) < 2 \ep \leq \ep_{X}$ so that $[x, y]$ is defined. It follows that $[x, y] = [[z, x], [y, z]] = [[z, x], z] =[z, z] =z$.
\end{proof}

It will probably help to have a picture of the bracket in mind, see figure \ref{bracket_fig}.

\begin{figure}[htb]
\begin{center}
\begin{tikzpicture}
\tikzstyle{axes}=[]
\begin{scope}[style=axes]
	\draw[<->] (-3,-1) node[left] {$X^s(x,\ep_X)$} -- (1,-1);
	\draw[<->] (-1,-3) -- (-1,1) node[above] {$X^u(x,\ep_X)$};
	\node at (-1.2,-1.4) {$x$};
	\node at (1.1,-1.4) {$[x,y]$};
	\pgfpathcircle{\pgfpoint{-1cm}{-1cm}} {2pt};
	\pgfpathcircle{\pgfpoint{0.5cm}{-1cm}} {2pt};
	\pgfusepath{fill}
\end{scope}
\begin{scope}[style=axes]
	\draw[<->] (-1.5,0.5) -- (2.5,0.5) node[right] {$X^s(y,\ep_X)$};
	\draw[<->] (0.5,-1.5) -- (0.5,2.5) node[above] {$X^u(y,\ep_X)$};
	\node at (0.7,0.2) {$y$};
	\node at (-1.6,0.2) {$[y,x]$};
	\pgfpathcircle{\pgfpoint{0.5cm}{0.5cm}} {2pt};
	\pgfpathcircle{\pgfpoint{-1cm}{0.5cm}} {2pt};
	\pgfusepath{fill}
\end{scope}
\end{tikzpicture}
\caption{The bracket map}
\label{bracket_fig}
\end{center}
\end{figure}
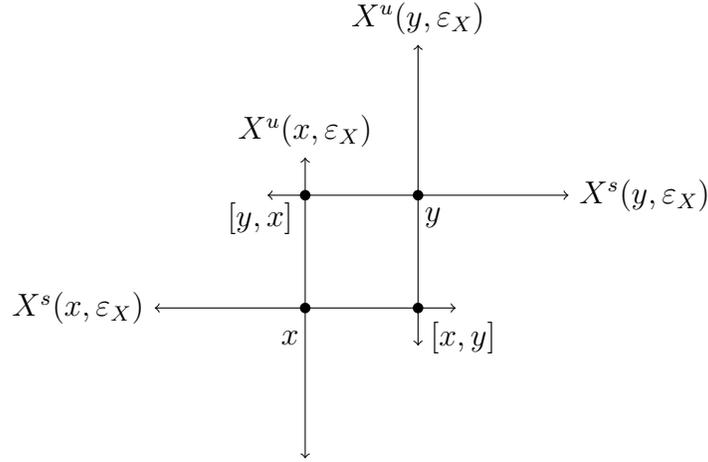

The final axiom is that, for $y,z$ in $X^{s}(x, \ep_{X})$, we have 
\[ d(\varphi(y),\varphi(z)) \leq \lambda^{-1} d(y,z), \]
and for $y,z$ in $X^{u}(x, \ep_{X})$, we have 
\[ d(\varphi^{-1}(y),\varphi^{-1}(z)) \leq \lambda^{-1} d(y,z). \]
That is, on the set $X^{s}(x, \ep_{X})$, $\varphi$ is contracting. It is tempting to say that  on $X^{u}(x, \ep_{X})$, $\varphi$ is expanding, but it is better to say that its inverse is contracting.

\begin{definition} \label{defn:Smale_space}
A Smale space is a compact metric space $(X, d)$ with a homeomorphism $\varphi$ such that there exist constants $ \ep_{X} > 0, \lambda > 1$ and map $[,]$ satisfying the conditions above.
\end{definition}

A Smale space $(X, d, \varphi)$ is said to be irreducible if the set of periodic points under $\varphi$ are dense and there is a dense $\varphi$-orbit.

The sets $X^{s}(x, \ep)$ and $X^{u}(x, \ep)$ are called the local contracting and expanding sets, respectively. We note for later convenience, that if $y$ is in $X^{s}(x, \ep)$, for some $x, y, \ep$, then for all $k \geq 0$,  $\varphi^{k}(y)$ is in $X^{s}(\varphi^{k}(x), \lambda^{-k} \ep)$. Similarly, if $y$ is in $X^{u}(x, \ep)$, for some $x, y, \ep$, then for all $k \geq 0$,  $\varphi^{-k}(y)$ is in $X^{u}(\varphi^{-k}(x), \lambda^{-k} \ep)$.

\begin{lemma} \label{lemma:epsilon'}
There is a constant $0 < \ep'_{X} \leq \ep_{X}/2$ such that, if $d(x, y) <  \ep'_{X} $, then 
both $d(x, [x,y]), d(y, [x,y]) <  \ep_{X}/2$ and hence $[x,y]$ is in $X^{s}(x, \ep_{X}/2)$ and in $X^{u}(y, \ep_{X}/2)$.
\end{lemma}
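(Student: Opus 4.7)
The plan is to extract $\ep'_X$ from the joint continuity of the bracket together with the identity $[x,x]=x$, and then verify the two algebraic side-conditions defining $X^s(x,\ep_X/2)$ and $X^u(y,\ep_X/2)$ using the stated axioms.

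First I would note that the domain of the bracket, $D=\{(x,y)\in X\times X \mid d(x,y)\leq \ep_X\}$, is a closed subset of the compact space $X\times X$, hence compact. Since $[,]$ is jointly continuous on $D$, both of the maps
\[ (x,y)\mapsto d\bigl(x,[x,y]\bigr), \qquad (x,y)\mapsto d\bigl(y,[x,y]\bigr) \]
are continuous on $D$, and by the axiom $[x,x]=x$ they vanish on the diagonal $\Delta_X=\{(x,x):x\in X\}\subset D$. A standard compactness argument (either uniform continuity on $D$, or a direct contradiction argument using a convergent subsequence in the compact set $D$) then yields a single $\ep'_X$ with $0<\ep'_X\leq \ep_X/2$ such that
\[ d(x,y)<\ep'_X \;\Longrightarrow\; d\bigl(x,[x,y]\bigr)<\ep_X/2 \;\text{ and }\; d\bigl(y,[x,y]\bigr)<\ep_X/2. \]

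Second, I would verify the algebraic conditions built into the definitions of $X^s(x,\ep_X/2)$ and $X^u(y,\ep_X/2)$. Applying the associativity-type axioms of the bracket, one computes
\[ \bigl[[x,y],x\bigr]=[x,x]=x, \qquad \bigl[y,[x,y]\bigr]=[y,y]=y, \]
using $[[a,b],c]=[a,c]$ and $[a,[b,c]]=[a,c]$ respectively. Combined with the distance estimates from the first step, these say exactly that $[x,y]\in X^s(x,\ep_X/2)$ and $[x,y]\in X^u(y,\ep_X/2)$, completing the proof.

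There is no serious obstacle here; the only thing to be careful about is that the two brackets appearing in the verification actually lie in the domain $D$ of the bracket map so that the axioms may legitimately be invoked. This is automatic since the chosen $\ep'_X$ is at most $\ep_X/2$, so all the pairs $(x,y)$, $(x,x)$, $(y,y)$, $(x,[x,y])$, $(y,[x,y])$, $([x,y],x)$ and $(y,[x,y])$ are within distance $\ep_X$ of each other. Thus the whole argument is essentially a direct application of uniform continuity plus bookkeeping with the bracket axioms.
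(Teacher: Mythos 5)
Your proof is correct and follows the same strategy as the paper: extract $\ep'_X$ from uniform continuity of $(x,y)\mapsto d(x,[x,y])$ and $(x,y)\mapsto d(y,[x,y])$ on the compact domain, using that these vanish on the diagonal, and then check the algebraic membership conditions via the bracket axioms. The only difference is that you spell out the computations $[[x,y],x]=x$ and $[y,[x,y]]=y$, which the paper compresses into ``follows from the definitions.''
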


\begin{proof}
The functions $d(x, [x,y]), d(y, [x,y])$ are both defined on the set of pairs $(x, y)$ with $d(x, y) \leq \ep_{X}$, which is compact, and are continuous. Moreover, on the set where $x=y$, they have value zero. The existence of $\ep'_{X}$ satisfying the first conditions follows from uniform continuity. The last part follows from the definitions.
\end{proof}

We now define global stable and unstable equivalence relations on $X$. Given a point $x$ in $X$ we define the stable and unstable equivalence classes of $x$ by
\begin{eqnarray}
\nonumber
X^s(x) & = & \{y \in X | \lim_{n \rightarrow +\infty} d(\varphi^n(x),\varphi^n(y)) = 0\}, \\
\nonumber
X^u(x) & = & \{y \in X | \lim_{n \rightarrow +\infty} d(\varphi^{-n}(x),\varphi^{-n}(y)) = 0\}.
\end{eqnarray}
We will often denote stable equivalence by $x \sim_s x'$ and unstable equivalence by $y \sim_u y'$. To see the connection between these global stable and unstable sets, we note that, for any $x$ in $X$ and $\ep >0$,
$X^{s}(x, \ep) \subset X^{s}(x), X^{u}(x, \ep) \subset X^{u}(x)$. Moreover, $y$ in $X$ is in 
$X^{s}(x)$ (or $X^{u}(x)$) if and only if there exists $n \geq 0$ such that
$\varphi^{n}(y)$ is in $X^{s}(\varphi^{n}(x), \ep)$ (or $\varphi^{-n}(y)$ is in 
$X^{u}(\varphi^{-n}(x), \ep)$, respectively).

\section{$C^{*}$-algebras}
\label{sec:Algebras}

We describe the construction of $C^\ast$-algebras from a Smale space. In the introduction, we indicated that the $C^\ast$-algebras $S(X, \varphi)$ and $U(X, \varphi)$ are the $C^\ast$-algebras of the stable and unstable equivalence relations, respectively. This is correct in spirit, but for the purposes of this paper, it is a half-truth. We will find it much easier to work with equivalence relations which are equivalent to these (in the sense of Muhly, Renault and Williams \cite{MRW}) but which are \'{e}tale. To do this, we simply restrict to the stable and unstable equivalence classes of $\varphi$-invariant sets of points.  Some care must be taken because these unstable classes are endowed with a different (and more natural topology) than the relative topology of $X$. Specifically, we choose sets, $P$ and $Q$, consisting of periodic points and their orbits. We note that, at this point, there are no limitations on the sets $P$ and $Q$, however, later we require that they are distinct from one another. We shall then construct \'{e}tale groupoids of stable and unstable equivalence, $G^{s}(X, \varphi, Q)$ and $G^{u}(X, \varphi, P)$. The $C^\ast$-algebras of these groupoids will be denoted $S(X, \varphi, P)$ and  $U(X, \varphi, P)$, respectively.

Let $(X,d,\varphi)$ be a Smale space and let $P$ and $Q$ be finite sets of $\varphi$-invariant periodic points. Consider 
\[ X^{s}(P) = \bigcup_{p \in P} X^{s}(p), X^{u}(Q) = \bigcup_{q \in Q} X^{u}(q).  \]
The set $X^s(P)$ is endowed with locally compact and Hausdorff topology by declaring that the collection of sets $X^s(x,\ep)$, as $x$ varies over $X^s(P)$ and $0 < \ep < \ep_X$, forms a neighbourhood base. Similarly for $X^u(Q)$. The stable and unstable groupoids are then defined by
\begin{eqnarray*}
& \gs & = \{(v,w) | v \sim_s w \textrm{ and } v,w \in X^u(Q) \} \\
& \gu & = \{(v,w) | v \sim_u w \textrm{ and } v,w \in X^s(P) \}.
\end{eqnarray*}
Let $(v,w)$ be in $\gs$. Then for some $N \geq 0$, $\varphi^N(w)$ is in $X^s(\varphi^N(v),\ep_X^\prime)$. By the continuity of $\varphi$, we may find $\delta > 0$ such that
$\varphi^N(X^u(w,\delta)) \subset X^u(\varphi^N(w),\ep_X^\prime)$. A map $h^s: X^u(w,\delta) \rightarrow X^u(v, \ep_X/2)$ is defined by
\[ h^s(x) = \varphi^{-N}[\varphi^N(x),\varphi^N(v)]. \]
Moreover, it is the composition of three maps, $\varphi^N$, $[\, \cdot \, , \varphi^N(v)]$, and $\varphi^{-N}$, which are open on local unstable sets, and hence it is open. It is easy to verify that this map is a local homeomorphism and that interchanging the roles of $v$ and $w$ gives another local homeomorphism. Where composition of the two maps is defined it is the identity, in either order. Let 
\[ V^s(v,w,h^s,\delta) = \{ (h^s(x),x) | x \in X^u(w,\delta)\}. \]

It will help to have a picture of the map $h^s$:
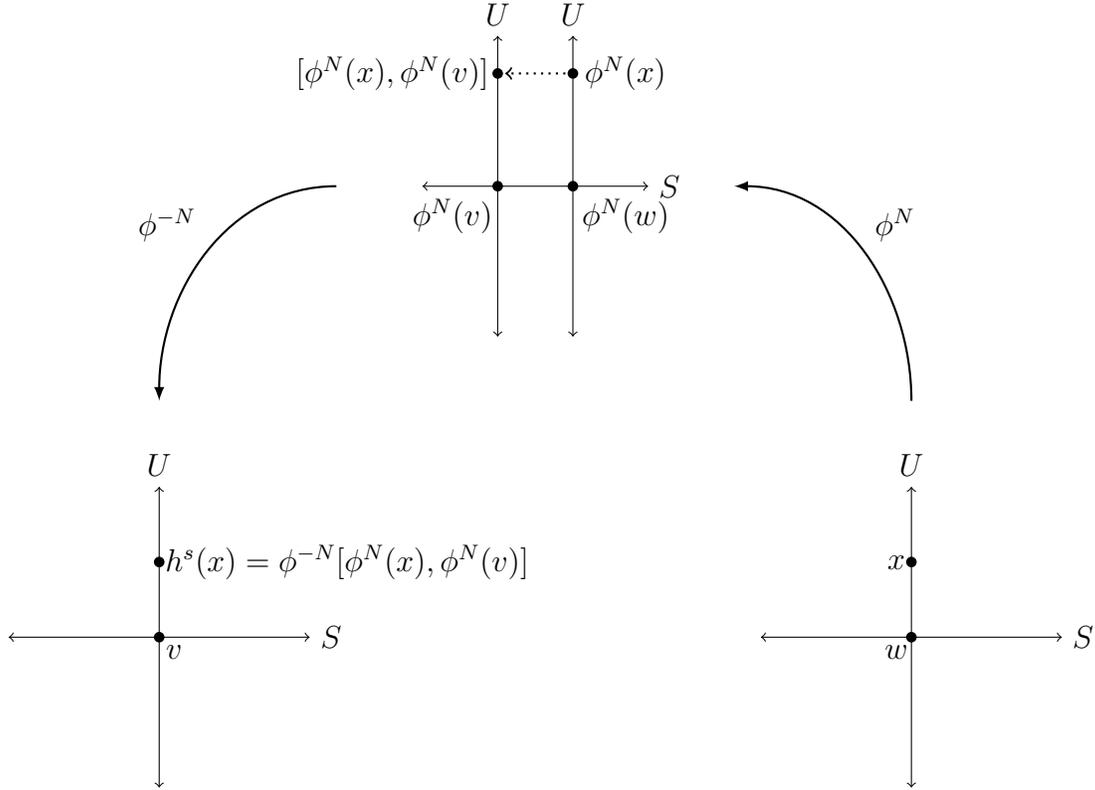
\begin{figure}[htb]
\begin{center}
\begin{tikzpicture}
\tikzstyle{axes}=[]
\begin{scope}[style=axes]
	\draw[<->] (3,0) -- (7,0) node[right] {$S$};
	\draw[<->] (5,-2) -- (5,2) node[above] {$U$};
	\node at (4.8,-0.2) {$w$};
	\node at (4.8,1) {$x$};
	\pgfpathcircle{\pgfpoint{5cm}{0cm}} {2pt};
	\pgfpathcircle{\pgfpoint{5cm}{1cm}} {2pt};
	\pgfusepath{fill}
\end{scope}
\begin{scope}[style=axes]
	\draw[<->] (-7,0) -- (-3,0) node[right] {$S$};
	\draw[<->] (-5,-2) -- (-5,2) node[above] {$U$};
	\node at (-4.8,-0.2) {$v$};
	\node at (-2.5,1) {$h^s(x)=\phi^{-N}[\phi^N(x),\phi^N(v)]$};
	\pgfpathcircle{\pgfpoint{-5cm}{0cm}} {2pt};
	\pgfpathcircle{\pgfpoint{-5cm}{1cm}} {2pt};
	\pgfusepath{fill}
\end{scope}
\begin{scope}[style=axes]
	\draw[<->] (-1.5,6) -- (1.5,6) node[right] {$S$};
	\draw[<->] (-.5,4) -- (-.5,8) node[above] {$U$};
	\draw[<->] (.5,4) -- (.5,8) node[above] {$U$};
	\node at (1.2,5.6) {$\phi^N(w)$};
	\node at (1.2,7.5) {$\phi^N(x)$};
	\node at (-1.1,5.6) {$\phi^N(v)$};
	\node at (-1.9,7.5) {$[\phi^N(x),\phi^N(v)]$};
	\pgfpathcircle{\pgfpoint{-.5cm}{6cm}} {2pt};
	\pgfpathcircle{\pgfpoint{.5cm}{6cm}} {2pt};
	\pgfpathcircle{\pgfpoint{-.5cm}{7.5cm}} {2pt};
	\pgfpathcircle{\pgfpoint{.5cm}{7.5cm}} {2pt};
	\pgfusepath{fill}
	\draw[->,thick,dotted] (0.4,7.5) -- (-0.4,7.5);
\end{scope}
\node (anchor1) at ( 2.5,6) {};
\node (anchor2) at ( 5,3) {}
	edge [->,>=latex,out=90,in=0,thick] node[auto,swap]{$\phi^N$} (anchor1);
\node (anchor3) at ( -2.5,6) {};
\node (anchor4) at ( -5,3) {}
	edge [<-,>=latex,out=90,in=180,thick] node[auto]{$\phi^{-N}$} (anchor3);
\end{tikzpicture}
\caption{The local homeomorphism $h^s: X^u(w,\delta) \rightarrow X^u(v,\delta)$}
\label{fig:local_homeo}
\end{center}
\end{figure}

\begin{lemma}\label{stable nbhd}
The collection of sets $V^s(v,w,h^s,\delta)$ as above forms a neighbourhood base for a topology on $\gs$ in which it is an \etale groupoid. 
\end{lemma}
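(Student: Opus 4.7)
The plan is to verify two things: that the sets $V^s(v,w,h^s,\delta)$ satisfy the axioms of a topological base on $\gs$, and that the resulting topology makes $\gs$ into an \'etale groupoid. For the covering axiom, given $(v,w)\in \gs$, the construction preceding the statement produces an integer $N\geq 0$, a $\delta>0$, and the local homeomorphism $h^s$ with $(v,w)\in V^s(v,w,h^s,\delta)$; indeed, evaluating at $x=w$ and using that $\varphi^N(w)\in X^s(\varphi^N(v),\ep_X')$ implies $[\varphi^N(w),\varphi^N(v)]=\varphi^N(v)$, yielding $h^s(w)=v$.

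For the basis intersection condition, suppose $(v',w')\in V^s(v_1,w_1,h_1^s,\delta_1)\cap V^s(v_2,w_2,h_2^s,\delta_2)$. First I would construct, as in the covering step, a set $V^s(v',w',h^s,\delta)$ containing $(v',w')$ with $X^u(w',\delta)\subset X^u(w_1,\delta_1)\cap X^u(w_2,\delta_2)$. The crux is to show, after a further shrinking of $\delta$, that $h^s=h_1^s=h_2^s$ on $X^u(w',\delta)$. All three candidates for the image of a fixed $x\in X^u(w',\delta)$ are of the form $\varphi^{-M}[\varphi^M(x),\varphi^M(u)]$ for appropriate $u\in\{v',v_1,v_2\}$ and sufficiently large $M$; using the equivariance $\varphi[a,b]=[\varphi(a),\varphi(b)]$, one may raise all the formulas to a common exponent $M\geq\max\{N,N_1,N_2\}$. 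The three resulting points then all lie in $X^s(\varphi^M(x),\ep_X/2)$ and in a single small local unstable set about $\varphi^M(v')$, so by the uniqueness assertion in Lemma \ref{lemma:bracket} they must coincide. Consequently $V^s(v',w',h^s,\delta)$ is contained in the intersection.

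For the \'etale groupoid structure, on $V^s(v,w,h^s,\delta)$ the source map $(v',w')\mapsto w'$ is projection onto the second coordinate, a homeomorphism onto $X^u(w,\delta)$, while the range map factors as this projection followed by $h^s$, a local homeomorphism by construction; this is the defining property of an \'etale groupoid. For continuity of inversion, $(v',w')\mapsto(w',v')$ carries $V^s(v,w,h^s,\delta)$ into a set of the form $V^s(w,v,(h^s)^{-1},\delta')$ after possible shrinking, since $(h^s)^{-1}$ is itself a local homeomorphism between unstable sets. For continuity of composition, given composable pairs from $V^s(v,w,h_1^s,\delta_1)$ and $V^s(w,u,h_2^s,\delta_2)$, the product lies in $V^s(v,u,h_1^s\circ h_2^s,\delta)$ for $\delta$ small, the composite being identified with the canonical local homeomorphism attached to $(v,u)$ by another application of the rigidity argument from the basis-intersection step.

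The main obstacle is precisely that rigidity statement: any local homeomorphism whose graph lies in stable equivalence and which sends $w'$ to $v'$ is uniquely determined on a small enough neighbourhood of $w'$. The argument relies on transporting all bracket expressions to a common large power of $\varphi$ via the equivariance axiom and then appealing to Lemma \ref{lemma:bracket} to pin the image point down. Once this rigidity is in place, both the basis property and the verification of the \'etale groupoid axioms become largely formal manipulations of the defining formula $h^s(x)=\varphi^{-N}[\varphi^N(x),\varphi^N(v)]$.
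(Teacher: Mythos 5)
The paper does not actually include a proof of Lemma~\ref{stable nbhd}; it is stated without argument, with the heavy lifting deferred to \cite{Put1} and \cite{PS}. So there is no ``paper's proof'' to compare against, and your proposal must be judged on its own merits. Your overall structure (covering axiom, intersection axiom, then source/range/inversion/composition for the \'etale structure) and the covering-axiom computation $h^s(w)=v$ are fine.

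The weak point is the rigidity step at the heart of the intersection axiom (and reused implicitly for composition). You claim that after raising all formulas to a common power $\varphi^M$, the three candidate images lie in $X^s(\varphi^M(x),\ep_X/2)$ \emph{and} in a single small local unstable set about $\varphi^M(v')$, so Lemma~\ref{lemma:bracket} pins them down. But forward iteration contracts stable distances and \emph{expands} unstable distances: the three points do become close in the stable direction, but $[\varphi^M(x),\varphi^M(v')]$, $[\varphi^M(x),\varphi^M(v_1)]$, $[\varphi^M(x),\varphi^M(v_2)]$ sit in local unstable sets of $\varphi^M(v')$, $\varphi^M(v_1)$, $\varphi^M(v_2)$, which for large $M$ are far apart even though $v',v_1,v_2$ are unstably equivalent and close. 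In addition, the rewrite $h^s(x)=\varphi^{-M}[\varphi^M(x),\varphi^M(v')]$ is only valid for $x$ in a $\delta$ that shrinks with $M$, since the unstable part of $d(\varphi^M(x),\varphi^M(v'))$ grows like $\lambda^{M-N}\delta$. So Lemma~\ref{lemma:bracket} cannot be applied directly as you describe.

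The correct rigidity statement, which your outline gestures at but does not establish, is: if $p,q$ lie in a common small local unstable set $X^u(v',\eta)$ (with $\eta\le\ep_X/2$) and $p\sim_s q$, then $p=q$. This does not follow from Lemma~\ref{lemma:bracket} alone. One argument: since $p,q\in X^u(v',\eta)$, the bracket axioms give $[p,q]=[p,[v',q]]=[p,v']=p$ and symmetrically $[q,p]=q$; on the other hand, $p\sim_s q$ means there is $N$ with $\varphi^N(q)\in X^s(\varphi^N(p),\ep_X)$, so $[\varphi^N(q),\varphi^N(p)]=\varphi^N(p)$, and by $\varphi$-equivariance $[q,p]=p$; hence $p=q$. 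Applying this (after shrinking $\delta$ so that $h^s(x)$, $h_1^s(x)$, $h_2^s(x)$ all land in a common $X^u(v',\eta)$) gives the needed coincidence of the three local homeomorphisms near $w'$. With that replacement your proof is sound; without it there is a genuine gap.
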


We also remark that if $Q$ meets each irreducible component of $(X,d,\varphi)$, then this groupoid is unique up to the notion of equivalence given in \cite{MRW}. In \cite{PS}, it is shown that these groupoids are amenable.

Analogous results obviously hold for $\gu$.

Let  $C_c(\gs)$ denote the continuous functions of compact support on $\gs$, which is a complex linear space. A product and involution are defined on $C_c(\gs)$ as follows, for $f,g \in C_c(\gs)$ and $(x,y) \in \gs$,
\begin{eqnarray}
\nonumber
& f \cdot g(x,y) & = \sum_{z \sim_s x} f(x,z)g(z,y) \\
\nonumber
& f^\ast(x,y) & = \overline{f(y,x)}.
\end{eqnarray}
This makes $C_c(\gs)$ into a complex $\ast$-algebra. Any function in $C_c(\gs)$ may be written as a sum of functions, each having support in an element of the neighbourhood base described in \ref{stable nbhd}.

\begin{definition}\label{Algebras_SandU}
Let $(X,d,\varphi)$ be a Smale space and $P$ and $Q$ be $\varphi$-invariant sets of periodic points. We define $S(X,\varphi,Q)$ and $U(X,\varphi,P)$ to be the (reduced) $C^\ast$-algebras associated with the \'{e}tale groupoids $\gs$ and $\gu$, respectively. When no confusion will arise, we denote them by $S$ and $U$.
\end{definition}

We now want to define a canonical representation of these $C^\ast$-algebras. Let $X^h(P,Q)$ denote the set $X^s(P) \cap X^u(Q)$. Since $(X,d,\varphi)$ is assumed to be irreducible, $X^h(P,Q)$ is dense in $X$ \cite{Rue1}. Viewing $X^h(P,Q)$ as the set of $\gs$ equivalence classes of $P$, we consider the restriction of the regular representation of $S(X,\varphi,Q)$ to equivalence classes of $P$ in $\gs$. This means that we consider the Hilbert space $\h = \ell^2(X^h(P,Q))$, and for $a$ in $C_c(\gs)$, $\xi$ in $\h$, we define
\[ (a \xi)(x) = \sum_{(x,y) \in \gs} a(x,y)\xi(y) \qquad \qquad (x \text{ in } \h). \]
Notice that we suppress the notation for the representation of $C_c(\gs)$ as bounded operators on $\bh$.

Next, viewing the same set $X^h(P,Q)$ as the union of $\gu$-equivalence classes of points in $Q$, we can consider the the restriction of the regular representation of $U(X,\varphi,P)$ to the same Hilbert space $\h$. For $b$ in $C_c(\gu)$, $\xi$ in $\h$, we define
\[ (b \xi)(x) = \sum_{(x,y) \in \gu} b(x,y)\xi(y) \qquad \qquad (x \text{ in } \h). \]

For each $x$ in $X^h(P,Q)$, we let $\delta_x$ denote the function on $X^h(P,Q)$ taking value $1$ at $x$ and zero elsewhere. Of course, the set $\{ \delta_x \st x \in X^h(P,Q) \}$ forms a basis for the Hilbert space $\h = \ell^2(X^h(P,Q))$. The following two lemmas follow directly from the definitions. We omit the proofs.

\begin{lemma}\label{S_basic_function}
Let $V^s(v,w,N,\delta)$ be a basic open set in $\gs$ and suppose $a$ is a continuous compactly supported function on $V^s(v,w,h^s,\delta)$. For each $x$ in $X^h(P,Q)$, we have
\[ a\delta_x = a(h^s(x),x)\delta_{h^s(x)}, \]
where the right hand side is defined to be zero if $h^s(x)$ is not defined. Define $\sor(a) \subseteq X^u(w,\delta)$ to be the points for which $a$ is non-zero on its domain and define $\range(a) \subseteq X^u(v,\ep_X^\prime)$ to be the points in $X^u(v,\ep_X^\prime)$ for which $a(h^s(x),x)\delta_{h^u(x)}$ is non-zero. Observe that $a$ is zero on the orthogonal complement of $X^u(w,\delta)$.
\end{lemma}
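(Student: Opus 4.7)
The plan is to verify this lemma by direct unpacking of the regular representation formula just defined, namely $(a\xi)(z) = \sum_{(z,y) \in G^s(X,\varphi,Q)} a(z,y)\xi(y)$, applied to the special vector $\xi = \delta_x$. The collapsing of this sum is essentially the point, so the argument is really just bookkeeping with the definitions of $V^s(v,w,h^s,\delta)$ and $\delta_x$.

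First I would substitute $\xi = \delta_x$ into the regular-representation formula. Because $\delta_x(y) = 0$ unless $y = x$, the sum indexed by $\{y : (z,y) \in G^s\}$ collapses to at most one term, yielding $(a\delta_x)(z) = a(z,x)$ whenever $(z,x) \in G^s$, and zero otherwise. Next I would use that $a$ is supported inside $V^s(v,w,h^s,\delta) = \{(h^s(y),y) : y \in X^u(w,\delta)\}$: a pair $(z,x)$ can contribute only if $x \in X^u(w,\delta)$ (so that $h^s(x)$ is defined) and $z = h^s(x)$. This immediately gives the two cases in the statement: if $x \notin X^u(w,\delta)$ then $a\delta_x = 0$ (which is the last sentence of the lemma, about $a$ vanishing on the orthogonal complement of $X^u(w,\delta)$), and if $x \in X^u(w,\delta)$ then $(a\delta_x)(z)$ is supported at the single point $z = h^s(x)$ with value $a(h^s(x),x)$, giving exactly $a\delta_x = a(h^s(x),x)\delta_{h^s(x)}$.

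The assertions about $\sor(a)$ and $\range(a)$ are then just names for the nonvanishing loci of the source and target projections applied to $a$; no argument beyond the identification of where $a(h^s(x),x)$ is nonzero is required, and the inclusion $\range(a) \subseteq X^u(v,\ep_X')$ follows because $h^s$ maps into $X^u(v,\ep_X/2) \subseteq X^u(v,\ep_X')$ by construction.

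I do not expect a serious obstacle here; the only point that needs mild care is that the sum defining $(a\delta_x)(z)$ runs over $y$ with $(z,y) \in G^s$ (the $G^s$-equivalence class of $z$ restricted to $X^u(Q)$), so one must check that $x \in X^h(P,Q) \subseteq X^u(Q)$ makes $\delta_x$ a legitimate index in the sum and that $z = h^s(x)$ also lies in $X^u(Q)$ (which holds because $h^s(x) \sim_s x$ and $X^u(Q)$ is $\sim_s$-saturated in the relevant sense used to build $G^s$). Once this is noted, the lemma is a direct consequence of the definitions and no further estimates are needed.
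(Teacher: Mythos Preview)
Your proposal is correct and is exactly what the paper intends: the authors explicitly state that this lemma ``follows directly from the definitions'' and omit the proof, and your direct unpacking of the regular-representation formula with $\xi=\delta_x$ is the natural (and only reasonable) way to see it. One small slip: your justification that $\range(a)\subseteq X^u(v,\ep_X')$ via $X^u(v,\ep_X/2)\subseteq X^u(v,\ep_X')$ has the inclusion backwards, since $\ep_X'\leq \ep_X/2$; but this concerns a definitional clause rather than the substantive claim, and the paper is itself somewhat loose on this point.
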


\begin{lemma}\label{U_basic_function}
Let $V^u(v,w,N,\delta)$ be a basic open set in $\gu$ and suppose $b$ is a continuous compactly supported function on $V^u(v,w,h^u,\delta)$. For each $x$ in $X^h(P,Q)$, we have
\[ b\delta_x = a(h^u(x),x)\delta_{h^u(x)}, \]
where the right hand side is defined to be zero if $h^u(x)$ is not defined. Define $\sor(b) \subseteq X^s(w,\delta)$ to be the points for which $b$ is non-zero on its domain and define $\range(b) \subseteq X^s(v,\ep_X^\prime)$ to be the points in $X^s(v,\ep_X^\prime)$ for which $b(h^u(x),x)\delta_{h^s(x)}$ is non-zero. Observe that $b$ is zero on the orthogonal complement of $X^s(w,\delta)$.
\end{lemma}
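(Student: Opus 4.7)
The plan is to unwind the definitions directly, in exact parallel with Lemma \ref{S_basic_function}, since this lemma is the $u$-analog of it and the author indicates the argument is routine.

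First I would apply the formula for the representation of $U(X,\varphi,P)$ on $\h = \ell^2(X^h(P,Q))$ to the basis vector $\delta_x$. For $y \in X^h(P,Q)$,
\[
(b\delta_x)(y) \;=\; \sum_{(y,z) \in \gu} b(y,z)\,\delta_x(z).
\]
Since $\delta_x(z) = 1$ iff $z = x$ and $0$ otherwise, the sum collapses to $b(y,x)$ whenever $(y,x) \in \gu$, i.e.\ whenever $y \sim_u x$, and to $0$ otherwise.

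Next I would use that $b$ has support inside the basic open set $V^u(v,w,h^u,\delta) = \{(h^u(z),z) \mid z \in X^s(w,\delta)\}$. For $b(y,x)$ to be nonzero, the pair $(y,x)$ must lie in this set, which forces $x \in X^s(w,\delta)$ (so that $h^u(x)$ is defined) and $y = h^u(x)$. This yields the formula
\[
b\delta_x \;=\; b(h^u(x),x)\,\delta_{h^u(x)},
\]
with the convention that the right-hand side is zero when $h^u(x)$ is not defined, i.e.\ when $x \notin X^s(w,\delta)$. In particular, $b\delta_x = 0$ whenever $x$ lies in the orthogonal complement of the closed span of $\{\delta_x \mid x \in X^s(w,\delta) \cap X^h(P,Q)\}$, which is the last assertion of the lemma.

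The source and range assertions then follow by inspection: $\sor(b)$ is by construction the set of $x \in X^s(w,\delta)$ with $b(h^u(x),x) \neq 0$, while $\range(b)$ consists of those points $h^u(x) \in X^s(v,\ep_X')$ for which the same quantity is nonzero, using that $h^u$ maps $X^s(w,\delta)$ into $X^s(v,\ep_X/2)$ by the construction of the local homeomorphism. There is no real obstacle here; the only thing to be careful of is keeping track of which points stay inside $X^h(P,Q)$, which follows because $h^u(x) \sim_u x$ puts $h^u(x)$ in the same unstable class in $X^u(Q)$ when $x \in X^u(Q)$, while $h^u(x) \in X^s(v,\ep_X/2) \subset X^s(P)$ since $v \in X^s(P)$. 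Thus $h^u(x) \in X^h(P,Q)$ whenever $x$ is, and the formula makes sense inside $\h$.
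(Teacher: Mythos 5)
Your proof is correct, and since the paper explicitly omits the proofs of Lemmas \ref{S_basic_function} and \ref{U_basic_function} as following directly from the definitions, your direct unwinding of the regular representation formula on the basis vector $\delta_x$ is precisely the intended argument. The additional check that $h^u(x)$ remains in $X^h(P,Q)$ whenever $x$ does is a worthwhile sanity check that the formula is internally consistent.
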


We note that every element of either of the above $C^\ast$-algebras can be uniformly approximated by a finite sum of functions supported in a neighbourhood base sets. We also have a nice geometric picture of the Hilbert space $\h$ in the sense that a natural basis is parameterized by the points of $X^h(P,Q)$. In this spirit, the picture of the map $h^s$, given in Figure \ref{fig:local_homeo} on page \pageref{fig:local_homeo}, can now be viewed as a picture of the operator $a$, up to the value of the function at specific points.

The sets of periodic points $P$ and $Q$ are chosen to be $\varphi$-invariant and so $X^{s}(P)$ and $X^{u}(Q)$ are also. Moreover, it is clear  that $\varphi$ induces homeomorphisms of both these spaces. It is also clear that $\varphi \times \varphi$ induces automorphisms of $\gs$ and $\gu$. These in turn define automorphisms of the $C^{*}$-algebras, $S(X, \varphi, Q)$ and $U(X, \varphi, P)$, denoted $\alpha_{s}$ and $\alpha_{u}$, respectively. Specifically, suppose $a$ is a continuous compactly supported function on a basic set $V^s(v,w,h^s,\delta)$ and $x$ is in $X^h(P,Q)$, then we have
\[ \alpha_s(a)\delta_x = a(h^s \circ \varphi^{-1}(x),\varphi^{-1}(x)) \delta_{\varphi \circ h^s \circ \varphi^{-1}(x)}. \]
Similarly, if $b$ is a continuous compactly supported function on a basic set $V^u(v,w,h^u,\delta)$ and $x$ is in $X^h(P,Q)$, then we have
\[ \alpha_u(b)\delta_x = b(h^u \circ \varphi^{-1}(x),\varphi^{-1}(x)) \delta_{\varphi \circ h^u \circ \varphi^{-1}(x)}. \]

For the same reason $X^h(P,Q)$ is also $\varphi$-invariant and this implies that there is a canonical unitary operator $u$ on $\h$ defined by $u \xi = \xi \circ \varphi^{-1}$, for any $\xi$ in $\h$. We note that $u \delta_{x} = \delta_{\varphi(x)}$, for any $x$ in $X^h(P,Q)$. We also note, without proof, that 
\begin{eqnarray*} 
 u a u^{*}  &  =   \alpha_{s}(a),  & a \in S(X, \varphi, Q) \\
u b u^{*}  &  =   \alpha_{u}(b),  & b \in U(X, \varphi, P).
\end{eqnarray*}
These covariant pairs define crossed product $C^\ast$-algebras.

\begin{definition}\label{Stable Ruelle}
The stable and unstable Ruelle algebras, denoted by $R^S(X,\varphi,Q)$ and $R^U(X,\varphi,P)$, respectively, are the crossed products:
\[ R^S(X,\varphi,Q) = S(X,\varphi,Q) \rtimes_{\alpha_s} \mathbb{Z} \qquad \text{ and } \qquad R^U(X,\varphi,P) = U(X,\varphi,P) \rtimes_{\alpha_u} \mathbb{Z}. \]
\end{definition}

We remark that the Ruelle algebras, as defined here, are Morita equivalent to the Ruelle algebras defined in \cite{Put1}. Moreover, the Ruelle algebras were shown to be separable, simple, stable, nuclear, and purely infinite when $(X,d,\varphi)$ is irreducible \cite{PS}, hence they also satisfy the Universal Coefficient Theorem \cite{Tu2}. Moreover, according to the purely infinite case of Elliott's classification program, as developed by Kirchberg and Phillips, they are completely classified by their $K$-theory groups.

Due to the definitions of stable and unstable equivalence we also note that that $U(X, \varphi, Q) = S(X, \varphi^{-1}, Q)$ and $R^{U}(X, \varphi,Q) = R^{S}(X, \varphi^{-1}, Q)$.

\section{Noncommutative Duality}
\label{sec:dual}

In this section we will review basic facts about duality which we will need.  We will also describe some properties possessed by algebras which have duals and discuss various examples.  Much of this material can be found in \cite{KaPu1} or \cite{Eme2}.

\subsection{KK-theory}

Let $KK^0(A, B)$ denote the Kasparov $KK$-group for a pair of separable $C^{*}$-algebras $A$ and $B$.   Denote $C_0(0,1)$ by $\mathscr{S}$. Then $KK^1(A,B) = KK^0(A \otimes \mathscr{S},B)$. There are various ways of obtaining elements of $KK^0(A,B)$.  For example, any homomorphism, $h: A \to B$ determines an element $[h] \in KK^0(A,B)$.  If $A$ is nuclear then there is a natural isomorphism $KK^1(A,B) \cong Ext(A,B)$ \cite{Kas}, where $Ext(A,B)$ is the group of classes of \csa~  extensions of the form
\begin{equation}
 0 \to B \te \K \to \mathcal E \to A \to 0.
\end{equation}
Thus, an extension determines an element of $KK^1(A,B)$.  One can retrieve the ordinary K-theory and K-homology groups from KK-theory as
\begin{eqnarray}
 K_*(A) = KK^*(\C,A)\\
K^*(A) = KK^*(A,\C).
\end{eqnarray}

We will be using the Kasparov product,
\begin{equation}
\begin{diagram}
 KK^i(A,B \te D) \x KK^j(D \te A', B') & \rTo{\te_D} & KK^{i+j}(A \te A', B\te B').
\end{diagram}
\end{equation}
As usual,  indices are to be taken modulo 2.  In the course of proofs it will be necessary to be more precise about explicit expressions for products.

We refer to Connes' book, \cite{Con1}, p. 428, for a presentation of this material, and \cite{Bla} for a more complete treatment.
% If $\sigma$ is a permutation of $\{ 1, 2, \ldots, n\}$, then we let $\sigma$ denote the obvious isomorphism 
% \[
%  A_{1} \otimes A_{2} \otimes \cdots \otimes A_{n} \rightarrow 
%  A_{\sigma(1)} \otimes A_{\sigma(2)} \otimes \cdots \otimes A_{\sigma(n)} 
% \]
% and let $\sigma_{ij}$ denote the induced map on $KK$-theory in the first variable and let $\sigma^{ij}$ denote the induced map on $KK$-theory in the second variable. 

Let $1_D \in KK^0(D,D)$ denote the class determined by the identity homomorphism.  Then there are natural maps  $\tau_D: KK^i(A,B) \rightarrow KK^i(A\tensor D,B\tensor D)$  and $\tau^D: KK^i(A,B) \rightarrow KK^i(D\tensor A,D\tensor B)$ obtained via $x \mapsto x \tensor 1_D$ and $x \mapsto 1_D \tensor x$.

\subsection{Duality classes}

\begin{definition}\label{def:duality}
Let $A$ and $B$ be separable $C^\ast$-algebras. We say that $A$ and $B$ are {\em Spanier-Whitehead dual}, or just {\em dual}, if there are duality classes $\Delta \in KK^i(A \tensor B,\cp)$ and $\delta \in KK^i(\cp,A\tensor B)$ such that
\begin{eqnarray*}
\delta \te_B : K^j(B) \to K_{i+j}(A) \\
\te_A \Delta : K_j(A) \to K^{i+j}(B)
% \delta \tensor_B \Delta & = & \, \, \, 1_A \quad \textrm{ and } \\
% \delta \tensor_A \Delta & = & -1_B.
\end{eqnarray*}
yield inverse isomorphisms.
\end{definition}

The main criterion is the following theorem, first presented in Connes' book.  (c.f. \cite{Con1,KaPu1,Eme2})

\begin{theorem}
\label{criterion}
 Let $\Delta \in KK^i(A \tensor B,\cp)$ and $\delta \in KK^i(\cp,A\tensor B)$ be given, satisfying the two conditions,
\begin{eqnarray*}
 \delta \tensor_B \Delta & = & \, \, \, 1_A  \\
 \delta \tensor_A \Delta & = & (-1)^i 1_B.
\end{eqnarray*}
Then $\delta$ and $\Delta$ implement a duality between $A$ and $B$. 
\end{theorem}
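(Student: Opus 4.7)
The plan is to verify Definition~\ref{def:duality} directly. Writing $f \colon K^j(B) \to K_{i+j}(A)$ for the map $f(x) = \delta \otimes_B x$ and $g \colon K_j(A) \to K^{i+j}(B)$ for $g(y) = y \otimes_A \Delta$, I must show that $g \circ f$ and $f \circ g$ are each the identity up to a unit sign. Once established, both $f$ and $g$ are bijections, and hence inverse isomorphisms of the form prescribed in Definition~\ref{def:duality}.

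The key idea is that each composition is a triple Kasparov product of $\delta$, $\Delta$, and either $x$ or $y$, which can be bracketed in two ways depending on whether one contracts the $A$-slot or the $B$-slot first. For $g \circ f$, pick $x \in KK^j(B, \mathbb{C})$; then
\[
g(f(x)) \;=\; (\delta \otimes_B x) \otimes_A \Delta \;\in\; KK^{2i+j}(B,\mathbb{C}) \;=\; K^j(B).
\]
Since $\otimes_B$ and $\otimes_A$ contract in the two independent tensor factors of $A \otimes B$, associativity of the Kasparov product together with its graded commutativity allows the contractions to be performed in the opposite order (up to a Koszul sign), giving
\[
g(f(x)) \;=\; \pm\, (\delta \otimes_A \Delta) \otimes_B x \;=\; \pm\, (-1)^i\, 1_B \otimes_B x \;=\; \pm\, x,
\]
where the second equality invokes the hypothesis $\delta \otimes_A \Delta = (-1)^i 1_B$ and the third uses the unit property of $1_B$. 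An entirely parallel computation for $y \in KK^j(\mathbb{C}, A)$ yields
\[
f(g(y)) \;=\; \delta \otimes_B (y \otimes_A \Delta) \;=\; \pm\, y \otimes_A (\delta \otimes_B \Delta) \;=\; \pm\, y \otimes_A 1_A \;=\; \pm\, y,
\]
using the other hypothesis $\delta \otimes_B \Delta = 1_A$. Thus both compositions equal the identity up to a sign, which is enough to conclude that $f$ and $g$ are mutually inverse isomorphisms.

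The only substantive technical step is the ``commuting contractions'' identity used to rearrange the triple product. To make this rigorous, one expresses each intermediate product via the external product operations $\tau_A$ and $\tau_B$ of Section~4.1, then appeals to associativity of the Kasparov product as formulated in \cite{Kas} or \cite{Bla}; because the two contractions act on disjoint tensor slots, the two bracketings of the triple product agree up to the Koszul sign dictated by the degrees of the swapped classes. The sign $(-1)^i$ in the second hypothesis of the theorem is precisely what absorbs these Koszul signs, and the lack of a sign in the first hypothesis reflects the same phenomenon in the other bracketing. This is the main (though standard) technical obstacle in a fully rigorous write-up; with it in hand, the proof collapses to the two displayed computations above, confirming the philosophy that in any such duality the work lies entirely in constructing $\delta$ and $\Delta$, not in verifying their formal interaction.
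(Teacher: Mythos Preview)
The paper does not supply its own proof of Theorem~\ref{criterion}; it is stated as a known criterion with references to \cite{Con1,KaPu1,Eme2}, and the subsequent Theorem~\ref{dual_isom_B} (also quoted from \cite{Eme2}) records the precise sign behaviour of the compositions. So there is no in-paper argument to compare against.

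Your approach is the standard one from those references: form the triple product $(\delta \otimes_B x) \otimes_A \Delta$, use associativity together with the fact that the two contractions act on disjoint tensor slots to rebracket it as $(\delta \otimes_A \Delta) \otimes_B x$ up to a Koszul sign, and invoke the hypotheses. This is correct in outline and is exactly how \cite{KaPu1} and \cite{Eme2} proceed. One small point: you leave the signs as unspecified $\pm$ throughout and then assert that this suffices for the maps to be ``inverse isomorphisms.'' Strictly speaking, Definition~\ref{def:duality} asks for inverse isomorphisms, not isomorphisms whose compositions are $\pm\mathrm{id}$; however, the paper itself treats this loosely (compare Theorem~\ref{dual_isom_B}, where the compositions are explicitly $(-1)^i$ and $(-1)^{i+1}$ times the identity), so your reading is consistent with the authors' conventions. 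If you wanted to tighten the argument, you would expand the abbreviated products using the conventions spelled out just after Theorem~\ref{criterion} (the $\sigma_*$ insertions) and track the signs through the graded-commutativity axiom of the external product; this is routine but tedious, and is precisely the ``standard technical obstacle'' you flag in your final paragraph.
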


Note that we are making use of the following standard conventions to make sense of the formulas in Theorem \ref{criterion}.  Let $\sigma : A \te B \to B \te A$ be the isomorphism interchanging the factors.
\begin{eqnarray*}
\delta \tensor_B \Delta & = & \sigma_{*}(\delta \tensor_B \sigma_{*}(\Delta))  \\
\delta \tensor_A \Delta & = & \sigma_{*}(\sigma^{*}(\delta) \tensor_A \Delta).
\end{eqnarray*}

\subsection{Bott periodicity and duality maps}
Because of our convention in the definition of $KK^1(A,B)$, for the sequel we will have to be explicit about how Bott periodicity fits into this for the sequel. We will also have to be more precise about the maps between K-groups induced by the duality elements.  

Let $\mathcal{T}$ denote the Toeplitz extension
\begin{diagram}
0 & \rTo & \mathcal{K}(\ell^2(\mathbb{N})) & \rTo & \mathcal{T} & \rTo & C(S^1) & \rTo & 0,
\end{diagram}
which determines an element of $KK^1(C(S^1),\cp)$. Observe that $\mathscr{S} \subset C(\mathbb{T}) $  and we denote the restriction of the Toeplitz extension to $\mathscr{S}$ by $\mathcal{T}_0$, which, by our conventions, is an element of $KK(\mathscr{S} \tensor \mathscr{S},\cp)$. Now if $\beta \in KK(\cp,\mathscr{S} \tensor \mathscr{S})$  is the Bott element, see 19.2.5 in \cite{Bla}, then we have
\[ \beta \tensor_{\mathscr{S} \tensor \mathscr{S}} \mathcal{T}_0 = 1_\cp \quad \textrm{ and } \quad \mathcal{T}_0 \tensor \beta = 1_{\mathscr{S} \tensor \mathscr{S}}, \]
(for this see Section $19.2$ in \cite{Bla}).   That is, $1_A \cong \tau^A(\mathcal{T}_0)$  and $1_B \cong \tau^B(\mathcal{T}_0)$.

In the present paper we will be working only with odd duality classes $\Delta \in KK^1(A \tensor B,\cp)$ and $\delta \in KK^1(\cp,A\tensor B)$.   We obtain maps between the various $K$-groups associated with $A$ and $B$ via the Kasparov product and we will need to be more precise about their relation to Bott periodicity.  To this end,
%\begin{definition}[\cite{KaPu1}]\label{def:duality_maps}
let $A$ and $B$ be $C^\ast$-algebras. Consider the homomorphisms $\Delta_i: K_i(A) \rightarrow K^{i+1}(B)$ and $\delta_i:K^{i}(B) \rightarrow K_{i+1}(A)$ defined by
\[ \begin{array}{cclc}
\Delta_0(x) & = & x \tensor_A \Delta & \qquad x \in K_0(A), \\
\Delta_1(x) & = & \beta \tensor_{\mathscr{S} \tensor \mathscr{S}}(\sigma_{*}(x \tensor_A \Delta)) & \qquad x \in K_1(A), \\ 
\delta_1(y) & = & \beta \tensor_{\mathscr{S} \tensor \mathscr{S}}(\delta \tensor_B y) & \qquad y \in K^1(B), \\
\delta_0(y) & = & \delta \tensor_B y & \qquad y \in K^0(B). \\
\end{array} \]
%\end{definition}

The compositions of these maps is described in the following result from \cite{Eme2}, which generalizes one from  \cite{KaPu1}.

\begin{theorem}[\cite{Eme2}]\label{dual_isom_B}
Let $A$ and $B$ be $C^\ast$-algebras. Suppose the classes $\Delta \in KK^1(A \tensor B,\cp)$ and $\delta \in KK^1(\cp,A\tensor B)$ satisfy the criterion in Theorem \ref{criterion}. Then,
\begin{eqnarray*}
\delta_{i+1} \circ \Delta_i & = & (-1)^i 1_{K_i(B)} \\
\Delta_{i+1} \circ \delta_i & = & (-1)^{i+1} 1_{K^{i}(A)}
\end{eqnarray*}
\end{theorem}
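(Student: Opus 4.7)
The plan is to reduce each of the four compositions to an application of the criterion in Theorem~\ref{criterion}, by exploiting associativity of the Kasparov product. The two identities in the criterion, $\delta \otimes_B \Delta = 1_A$ and $\delta \otimes_A \Delta = -1_B$, are the only nontrivial inputs required; the sign $(-1)^i$ on the right-hand side of the theorem simply records which of the two contractions is being performed.

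I would start with the simplest composition, $\delta_1 \circ \Delta_0$, which avoids the $\sigma_*$ and the outer $\beta$ in the definition of $\Delta_i$. For $x \in K_0(A)$ the definitions give
\[
\delta_1(\Delta_0(x)) = \beta \otimes_{\mathscr{S}\otimes\mathscr{S}} \bigl( \delta \otimes_B (x \otimes_A \Delta) \bigr).
\]
Since $x$ acts only on the $A$-factor and $\delta$ outputs both an $A$- and a $B$-factor, associativity of the Kasparov product (after inflating by $\tau_B$) lets me regroup this as
\[
\beta \otimes_{\mathscr{S}\otimes\mathscr{S}} \bigl( x \otimes_A (\delta \otimes_B \Delta) \bigr).
\]
The criterion identifies $\delta \otimes_B \Delta = 1_A$ in $KK^2(A,A)$, and the outer factor $\beta \otimes_{\mathscr{S}\otimes\mathscr{S}}$ is exactly the Bott isomorphism $KK^2 \cong KK^0$, so the whole expression collapses to $x \otimes_A 1_A = x$. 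This gives $\delta_1 \circ \Delta_0 = 1_{K_0(A)}$, matching $(-1)^0$.

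The remaining three compositions follow the same template. In every case $\delta_{i+1} \circ \Delta_i$ contracts through $\delta \otimes_B \Delta = 1_A$ to yield a $+1$, while $\Delta_{i+1} \circ \delta_i$ contracts through $\delta \otimes_A \Delta = -1_B$ to yield a $-1$, which is precisely the asymmetry between the two identities in the conclusion. The formulas for $\Delta_1$ and $\delta_1$ carry an additional $\sigma_*$ and an outer $\beta \otimes_{\mathscr{S}\otimes\mathscr{S}}$ because multiplying two odd classes produces two suspension factors that must be absorbed; after unwinding, one still arrives at the same contraction, up to the coordinate flip $\sigma$. The main obstacle is purely bookkeeping: I must verify that $\beta$ and $\sigma_*$ commute past the intermediate Kasparov products without introducing spurious signs, and that the degree conventions $KK^1(A,B) = KK^0(A\otimes\mathscr{S},B)$ together with $\beta \otimes_{\mathscr{S}\otimes\mathscr{S}} \mathcal{T}_0 = 1_{\mathbb{C}}$ are applied consistently across all four compositions. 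Once this bookkeeping is settled, the theorem follows mechanically from the criterion, with no further geometric or dynamical content involved.
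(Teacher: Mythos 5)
Your approach---regroup the Kasparov products by associativity until each composition exposes one of the two criterion identities, with the outer $\beta \otimes_{\mathscr{S}\otimes\mathscr{S}}$ absorbing the resulting suspension factors---is the right one; it is the approach taken in \cite{Eme2}, and the paper at hand gives no proof of its own (the theorem is simply cited from Emerson). Your $i=0$ computation $\delta_1\circ\Delta_0 = 1_{K_0(A)}$ is correct as far as it goes, and in writing $K_0(A)$ you have tacitly corrected what looks like a typographical slip in the theorem as printed: from the stated types $\Delta_i\colon K_i(A)\to K^{i+1}(B)$ and $\delta_i\colon K^i(B)\to K_{i+1}(A)$, the composite $\delta_{i+1}\circ\Delta_i$ acts on $K_i(A)$ (not $K_i(B)$) and $\Delta_{i+1}\circ\delta_i$ acts on $K^i(B)$ (not $K^i(A)$).

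The genuine gap is in the sign analysis for $i=1$. You assert that $\delta_{i+1}\circ\Delta_i$ always yields $+1$ because it passes through $\delta\otimes_B\Delta = 1_A$, and that $\Delta_{i+1}\circ\delta_i$ always yields $-1$ because it passes through $\delta\otimes_A\Delta = -1_B$, so that the sign in the conclusion merely records which criterion identity was used. That is incompatible with the theorem, which gives $\delta_{i+1}\circ\Delta_i = (-1)^i$ and $\Delta_{i+1}\circ\delta_i = (-1)^{i+1}$: for $i=1$ these are $-1$ and $+1$, the opposite of your template. The extra factor of $-1$ when $i=1$ is precisely the ``spurious sign'' you propose to rule out. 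It comes from reordering suspension factors when the input class is itself odd: commuting the outer $\beta$ and the $\mathscr{S}$ carried by a $K_1$- or $K^1$-class past the odd duality element introduces a flip of two $\mathscr{S}$-factors, and that flip automorphism is $-1$ in $KK^0(\mathscr{S}\otimes\mathscr{S},\mathscr{S}\otimes\mathscr{S})$. The deliberate asymmetry in the definitions of $\Delta_1$ (which carries a $\sigma_*$) and $\delta_1$ (which does not) is designed exactly so that this flip appears. To make the argument correct you need to carry out the $i=1$ compositions explicitly, tracking the permutation of $\mathscr{S}$-factors introduced at each step; the criterion alone does not determine the overall sign, and the claim that the bookkeeping can be done ``without introducing spurious signs'' is exactly where the proposal breaks down.
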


Interchanging the roles of $A$ and $B$ gives a similar result, and in either case we obtain isomorphisms $K_i(B) \cong K^{i+1}(A)$. 

\subsection{Consequences of duality}

In this section we will describe some algebraic consequences of an algebra having a dual.  Since this paper deals with Ruelle algebras associated to hyperbolic dynamical systems we will take advantage of the additional properties these algebras have.  In particular, they are separable, nuclear, purely infinite, and since they are algebras obtained from amenable groupoids, they satisfy the Universal Coefficient Theorem, \cite{RS} .  We will show that the Ruelle algebras are self-dual, hence satisfy a version of Poincar\'e duality.  This requires an additional hypothesis on the dynamical system which is very likely to hold in general.  Namely, we assume that $K_*(U(X,\varphi,P))$ and $K_*(S(X,\varphi,Q))$ are finite rank abelian groups.  Indeed, it will follow from results due to the second author on a special homology theory for Smale spaces \cite{Put2}, and is the analog of the rationality of the zeta function of such a system. 

 To start, we will assume that $A$ is separable and possesses the following properties. 
\begin{itemize}
 %\item[a)] is a Kirchberg-Phillips algebra
 \item[a)] The algebra $A$ has an odd, separable, Spanier-Whitehead dual, $\D(A)$.
 \item[b)] The Universal Coefficient Theorem holds for $A$  and $\D(A)$, with K-homology in the middle.
 %\item[d)] has finitely generated K-theory.  
\end{itemize}
It follows from this that,
\begin{itemize}
 \item[c)] The Universal Coefficient Theorem holds for $A$ and  $\D(A)$, with K-theory in the middle.
 \item[d)]  $K_* (A)$ and $K^*(A)$ are finitely generated groups. 
\end{itemize}

% \begin{equation}
% \begin{aligned}
% fK_i(A) &\cong fK^{i+1}(A') \cong fK_{i+1}(A') \\
% tK_i(A) &\cong tK^{i+1}(A') \cong tK_i(A')                                                                                                                                      
% \end{aligned}                                                                                                                                     \end{equation}
Statement (c) follows by applying duality to the Universal Coeffient Theorem for K-homology.  We will give a proof of (d).

\begin{proof}
First note that the Universal Coefficient Theorem and separability of $A$ imply that $\Hom(K_*(A),\Z)$ and $\Ext(K_*(A),Z)$ are both countable, as are the corresponding groups with $K^*(A)$ via duality.    Let $tK_*(A)$ denote the torsion subgroup of $K_*(A)$.  Applying $\Hom(\cdot ,\Z)$ to the sequence $$ 0 \to tK_*(A) \to  K_*(A) \to   K_*(A)/tK_*(A) \to 0$$ one deduces that $\Ext(K_*(A)/tK_*(A),\Z)$ is countable.  

It is shown in \cite{NR} that if a group $H$ is torsion free and $\Ext(H,\Z)$ is countable, then $H$ is free. Applying this to $K_*(A)/tK_*(A)$ one gets that $K_*(A) = tK_*(A) \oplus K_*(A)/tK_*(A)$.  It follows, since $\Hom(K_*(A),\Z)$ is countable, that $\Hom(K_*(A)/tK_*(A),\Z)$ is countable as well.  Thus, $K_*(A)/tK_*(A)$ must be finitely generated, or else it would be uncountable.  

Next one uses that for a torsion group $T$, $\Ext(T,\Z)$ is the Pontryagin dual of T, where T is given the discrete topology, \cite{Fuc}.  Thus, $\Ext(tK_*(A),\Z)$ is a compact topological group.  If it is infinite, then it is a perfect topological space, hence uncountable, so it must be finite.  Therefore, $K_*(A)$ is finitely generated.  Similarly, $K^*(A)$ is finitely generated.

\end{proof}

The Ruelle algebras $R^S(X,\varphi,Q)$ and $R^U(X,\varphi,P)$ are separable $C^*$-algebras associated to amenable groupoids. Hence, by \cite{Tu2}, they satisfy the Universal Coefficient Theorem and (d) applies.  Thus, we obtain
\begin{prop}
 The groups $K_*(R^S(X,\varphi,Q))$, $K_*(R^U(X,\varphi,P))$, $K^*(R^S(X,\varphi,Q))$, and $K^*(R^U(X,\varphi,P))$ are finitely generated. 
\end{prop}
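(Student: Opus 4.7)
The strategy is to apply the general implication established in the paragraphs immediately preceding the proposition: a separable $C^*$-algebra admitting an odd Spanier-Whitehead dual, for which the UCT holds for both the algebra and its dual, automatically has finitely generated $K$-theory and $K$-homology. The task therefore reduces to verifying these three inputs for each Ruelle algebra.

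First I would record that separability is built into the construction. The \'{e}tale groupoids $G^s(X,\varphi,Q)$ and $G^u(X,\varphi,P)$ have second countable unit spaces (inherited from the compact metric space $X$), so their reduced $C^*$-algebras $S(X,\varphi,Q)$ and $U(X,\varphi,P)$ are separable, and forming crossed products by $\Z$ preserves separability. Hence $R^S(X,\varphi,Q)$ and $R^U(X,\varphi,P)$ are both separable.

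Second, for the Universal Coefficient Theorem I would invoke \cite{Tu2}, which asserts that the reduced $C^*$-algebra of an amenable \'{e}tale groupoid satisfies the UCT. The amenability of $G^s(X,\varphi,Q)$ and $G^u(X,\varphi,P)$ was recorded after Lemma \ref{stable nbhd} with reference to \cite{PS}. Since amenability is preserved under the action groupoid construction for the $\Z$-action generated by $\varphi$, the UCT holds for both $R^S(X,\varphi,Q)$ and $R^U(X,\varphi,P)$, giving hypothesis (b).

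Third, the existence of an odd Spanier-Whitehead dual is exactly the content of Theorem \ref{thm:main}, which exhibits $R^S(X,\varphi,Q)$ and $R^U(X,\varphi,P)$ as a pair of odd mutual duals via the classes $\delta$ and $\Delta$. Thus hypothesis (a) holds for each algebra, with the other algebra playing the role of the dual. With (a), (b), and separability confirmed for both, the argument given above for the implication to (d) applies verbatim to $A = R^S(X,\varphi,Q)$ and, by symmetry (interchanging the roles of $A$ and $\D(A)$), also to $A = R^U(X,\varphi,P)$, yielding finite generation of all four groups at once. There is no substantive obstacle here beyond this bookkeeping; the only subtlety worth flagging is ensuring that the proof of (d) is genuinely symmetric in $A$ and $\D(A)$, which it is, since both separability and the UCT were assumed for both algebras in the preceding argument.
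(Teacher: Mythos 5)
Your proposal is correct and follows essentially the same route as the paper: the paper likewise cites separability, amenability of the groupoids together with Tu's theorem \cite{Tu2} for the UCT, and the duality of Theorem~\ref{thm:main} to feed into the preceding implication (d). You spell out the intermediate verifications (separability from second countability, preservation of amenability under the $\Z$-crossed product, symmetry of the argument in $A$ and $\D(A)$) a bit more explicitly than the paper does, but there is no difference in substance.
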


\begin{prop}\label{isomorphism} 
If $\rank (K_0(R^S(X,\varphi,Q))) = \rank (K_1(R^S(X,\varphi,Q)))$, then $R^S(X,\varphi,Q) \cong R^U(X,\varphi,P)$.
\end{prop}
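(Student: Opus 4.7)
The plan is to reduce the isomorphism assertion to the Kirchberg--Phillips classification theorem, which applies because both Ruelle algebras are separable, simple, stable, nuclear, purely infinite, and satisfy the UCT (as recorded after Definition \ref{Stable Ruelle}). Thus the task reduces to verifying that $K_i(R^S(X,\varphi,Q))\cong K_i(R^U(X,\varphi,P))$ for $i=0,1$ as abelian groups.

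First I would use the main duality theorem (Theorem \ref{thm:main}) together with the identifications of the induced maps from Theorem \ref{dual_isom_B} to obtain the isomorphisms
\[
K_i(R^S(X,\varphi,Q))\;\cong\;K^{i+1}(R^U(X,\varphi,P)),\qquad i=0,1.
\]
Next, by the preceding proposition all four K-groups in question are finitely generated, so I can write $K_i(R^U(X,\varphi,P))=\Z^{n_i}\oplus T_i$ with $T_i$ finite torsion. Applying the UCT to $R^U(X,\varphi,P)$, which satisfies it, and using that $\Hom(\Z^{n}\oplus T,\Z)=\Z^n$ and $\Ext(\Z^n\oplus T,\Z)=T$ in the finitely generated case, the UCT short exact sequence splits (non-naturally) and yields
\[
K^{i+1}(R^U(X,\varphi,P))\;\cong\;T_i\oplus\Z^{n_{i+1}}.
\]
Combining these two steps gives isomorphisms $K_i(R^S(X,\varphi,Q))\cong T_i\oplus\Z^{n_{i+1}}$, which forces the torsion subgroup of $K_i(R^S(X,\varphi,Q))$ to be isomorphic to $T_i$ and the free rank of $K_i(R^S(X,\varphi,Q))$ to equal $n_{i+1}$.

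Now I would bring in the hypothesis. The identification of ranks above says $\rank K_0(R^S(X,\varphi,Q))=n_1$ and $\rank K_1(R^S(X,\varphi,Q))=n_0$, so the assumption $\rank K_0(R^S(X,\varphi,Q))=\rank K_1(R^S(X,\varphi,Q))$ forces $n_0=n_1$; call this common rank $n$. Together with the torsion identification from the previous paragraph, this gives
\[
K_i(R^S(X,\varphi,Q))\;\cong\;T_i\oplus\Z^n\;\cong\;\Z^{n_i}\oplus T_i\;=\;K_i(R^U(X,\varphi,P))
\]
for $i=0,1$. Invoking the Kirchberg--Phillips classification now yields $R^S(X,\varphi,Q)\cong R^U(X,\varphi,P)$.

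The only subtle point is the bookkeeping of ranks: duality shifts the grading by one, so it does not directly give $K_i(R^S)\cong K_i(R^U)$, and one genuinely needs the rank hypothesis to collapse the two gradings. Once that is noted, the argument is essentially a diagram chase through duality and UCT; no hard step remains because both UCT splittings and the Kirchberg--Phillips theorem are being used as black boxes. If anything, the place requiring care is making sure that duality and the UCT interact through the \emph{same} $K$-homology groups with the correct indices; the clean framework set up in Theorem \ref{dual_isom_B} removes any ambiguity on that score.
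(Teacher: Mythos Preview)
Your proof is correct and follows essentially the same route as the paper: reduce to Kirchberg--Phillips, use duality to identify $K_i(R^S)$ with $K^{i+1}(R^U)$, apply the UCT (using that all groups are finitely generated) to match torsion parts, and invoke the rank hypothesis to match free parts. If anything, your bookkeeping makes the role of the hypothesis more transparent than the paper's terse chain of equalities, where its use in the final step $\rank(K^1(R^U))=\rank(K_0(R^U))$ is left implicit.
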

\begin{proof}
 The algebras $R^S(X,\varphi,Q)$ and $R^U(X,\varphi,P)$ satisfy the hypothesis of the Kirchberg-Phillips theorem, \cite{Phi}.  Thus we must only show that their K-theory groups are isomorphic.  Considering torsion first, note that, using duality and the Universal Coefficient Theorem,  

$$ tK_0(R^S(X,\varphi,Q)) \cong \Ext(K^1(R^S(X,\varphi,Q)), \Z) \cong tK^1(R^S(X,\varphi,Q)) \cong tK_0(R^U(X,\varphi,P)). $$  For the free part,  
\begin{eqnarray*}
 \rank (K_0(R^S(X,\varphi,Q))) & = & \rank (\Hom(K^0(R^S(X,\varphi,Q))),\Z) \\
 & = & \rank (\Hom(K_1(R^U(X,\varphi,P)),\Z)) \\
 & = &  \rank (K^1(R^U(X,\varphi,P))) = \rank (K_0(R^U(X,\varphi,P)))
\end{eqnarray*} 
A similar argument shows that $K_1(R^S(X,\varphi,Q)) \cong K_1(R^U(X,\varphi,P))$.  Thus, by the classification theorem, \cite{Phi}, we obtain that $R^S(X,\varphi,Q) \cong R^U(X,\varphi,P)$.
\end{proof}

\begin{remark}
 Note that $S(X,\varphi,Q)$ and $U(X,\varphi,P)$ are not isomorphic in general.
\end{remark}

\begin{cor}
 The algebras $R^S(X,\varphi,Q)$ and $R^U(X,\varphi,P)$ satisfy Poincar\'e duality.
 \end{cor}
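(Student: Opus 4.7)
The plan is to deduce Poincaré duality for each Ruelle algebra by combining Proposition \ref{isomorphism} (which under the stated rank hypothesis gives an explicit $\ast$-isomorphism $\phi \colon R^S(X,\varphi,Q) \to R^U(X,\varphi,P)$) with the Spanier--Whitehead duality classes $\delta \in KK^1(\C, R^S \otimes R^U)$ and $\Delta \in KK^1(R^S \otimes R^U, \C)$ furnished by Theorem \ref{thm:main}. The underlying idea is simply that once $R^S$ and $R^U$ are isomorphic, Spanier--Whitehead duality between them collapses to Poincaré self-duality for each.

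Concretely, first I would fix the isomorphism $\phi$ provided by the Kirchberg--Phillips classification argument in the proof of Proposition \ref{isomorphism}. Using $\phi$ and its inverse as $KK$-elements $[\phi] \in KK^0(R^S, R^U)$ and $[\phi^{-1}] \in KK^0(R^U, R^S)$, I would define new classes
\[
\delta' \;=\; \delta \otimes_{R^U} [\phi^{-1}] \;\in\; KK^1(\C, R^S \otimes R^S),
\qquad
\Delta' \;=\; (\id \otimes \phi)_\ast \Delta \;\in\; KK^1(R^S \otimes R^S, \C),
\]
or, equivalently, the analogous pair in $KK^1(\C, R^U \otimes R^U)$ and $KK^1(R^U \otimes R^U, \C)$ obtained by transporting the $R^S$-factor instead. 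Associativity and functoriality of the Kasparov product show that the identities
\[
\delta' \otimes_{R^S} \Delta' = 1_{R^S}, \qquad \delta' \otimes_{R^S} \Delta' = -\,1_{R^S}
\]
(with the appropriate flip convention as in Theorem \ref{criterion}) follow at once from the corresponding identities for $(\delta, \Delta)$ stated in Theorem \ref{thm:main}, because $[\phi] \otimes_{R^U} [\phi^{-1}] = 1_{R^S}$ and similarly on the other side. Thus $(\delta', \Delta')$ satisfies the criterion of Theorem \ref{criterion} with $A = B = R^S(X,\varphi,Q)$, which is exactly Poincaré duality in the sense of Connes for $R^S$. The same argument with the roles of $R^S$ and $R^U$ interchanged gives Poincaré duality for $R^U$.

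The only point requiring genuine care --- and the main (minor) obstacle --- is bookkeeping of the flips and of the parity in the two conditions of Theorem \ref{criterion}: because the duality is odd, the identity $\delta \otimes_{R^S} \Delta = -\,1_{R^u}$ picks up the sign $(-1)^1$ and one must verify that transporting along $\phi$ respects the convention $\delta \otimes_B \Delta = \sigma_\ast(\delta \otimes_B \sigma_\ast \Delta)$ introduced after Theorem \ref{criterion}. This is a direct check using naturality of the flip $\sigma$ under $\phi \otimes \phi$. Once this is handled, the corollary follows immediately and, in light of Definition \ref{def:duality}, it yields the expected isomorphisms
\[
K_i\bigl(R^S(X,\varphi,Q)\bigr) \;\cong\; K^{i+1}\bigl(R^S(X,\varphi,Q)\bigr),
\]
and likewise for $R^U(X,\varphi,P)$.
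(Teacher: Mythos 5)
Your proposal is correct and follows essentially the same route as the paper: the paper likewise promotes the Kirchberg--Phillips isomorphism of Proposition \ref{isomorphism} to a $KK$-equivalence $\xi \in KK(R^S(X,\varphi,Q), R^U(X,\varphi,P))$ and uses it to transport $\delta$ and $\Delta$ to classes $\tilde\delta$, $\tilde\Delta$ in $KK^1(\C, R^U \otimes R^U)$ and $KK^1(R^U \otimes R^U, \C)$, then verifies the resulting self-duality by associativity of the Kasparov product. (Minor typo in your write-up: the two displayed identities are written identically on the left-hand side; you clearly intend the two conditions of Theorem \ref{criterion} with the two different internal tensor factors and the flip $\sigma$, as you say in the following sentence.)
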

\begin{proof}
 The isomorphism between $R^S(X,\varphi,Q)$ and $R^U(X,\varphi,P)$ is implemented by an element $\xi \in KK(R^S(X,\varphi,Q),R^U(X,\varphi,P))$.  Consider the duality classes \\ $\Delta \in KK^1(R^S(X,\varphi,Q)\otimes~R^U(X,\varphi,P),\C)$ and $\delta \in KK^1(\C,R^S(X,\varphi,Q)\otimes R^U(X,\varphi,P))$.  Let $\tilde\delta \in KK^1(\C,R^U(X,\varphi,P) \otimes R^U(X,\varphi,P))$ and $\tilde \Delta \in KK^1(R^U(X,\varphi,P) \otimes R^U(X,\varphi,P), \C)$ be defined by 
\begin{equation}
 \tilde \delta = \delta^\tau \otimes_{R^S(X,\varphi,Q)} \xi %\in KK^1(\C,R^S(X,\varphi,Q) \otimes R^S(X,\varphi,Q))
\end{equation}
and 
\begin{equation}
 \tilde \Delta = \xi^{-1} \otimes_{R^S(X,\varphi,Q)} \Delta^\tau  %\in KK^1(R^S(X,\varphi,Q) \otimes R^S(X,\varphi,Q),\C)
\end{equation}
Then it follows from the properties of $\delta$ and $\Delta$ that, for $x \in K_0(R^U(X,\varphi,P))$ one has
\begin{equation}
 \tilde \delta \otimes_{R^U(X,\varphi,P)} (x \otimes_{R^U(X,\varphi,P)} \tilde \Delta) = x.
\end{equation}
\end{proof}

The question of when the hypothesis of Proposition \ref{isomorphism} holds will now be addressed.  

\begin{prop}
 Assume that $K_*(S(X,\varphi,Q))$ and $K_*(U(X,\varphi,P))$ have finite rank.  Then, one has 
\begin{eqnarray*}
\rank K_0(R^S(X,\varphi,Q)) & =  & \rank K_1(R^S(X,\varphi,Q)) \\
\rank K_0(R^U(X,\varphi,P)) & =  & \rank K_1(R^U(X,\varphi,P)).
\end{eqnarray*}
\end{prop}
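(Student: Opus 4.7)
The plan is to invoke the Pimsner–Voiculescu six-term exact sequence for the crossed products $R^S(X,\varphi,Q) = S(X,\varphi,Q) \rtimes_{\alpha_s}\Z$ and $R^U(X,\varphi,P) = U(X,\varphi,P) \rtimes_{\alpha_u}\Z$, and then do a short rank computation. Since the proposition assumes that $K_\ast(S(X,\varphi,Q))$ and $K_\ast(U(X,\varphi,P))$ are finite rank, all ranks appearing below are finite, and rank is additive on short exact sequences of abelian groups.

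Concretely, for $R^S$ the Pimsner–Voiculescu hexagon reads
\[
\begin{CD}
K_0(S) @>1-\alpha_{s\ast}>> K_0(S) @>>> K_0(R^S) \\
@AAA @. @VVV \\
K_1(R^S) @<<< K_1(S) @<<1-\alpha_{s\ast}< K_1(S)
\end{CD}
\]
Breaking this hexagon into short exact sequences gives
\[
0 \to \cok(1-\alpha_{s\ast}|_{K_0(S)}) \to K_0(R^S) \to \ker(1-\alpha_{s\ast}|_{K_1(S)}) \to 0,
\]
\[
0 \to \cok(1-\alpha_{s\ast}|_{K_1(S)}) \to K_1(R^S) \to \ker(1-\alpha_{s\ast}|_{K_0(S)}) \to 0.
\]
Now for any endomorphism $\phi$ of a finitely generated abelian group $A$, the identity $\rank A = \rank\ker\phi + \rank\image\phi$ together with $\rank\cok\phi = \rank A - \rank\image\phi$ gives $\rank\ker\phi = \rank\cok\phi$. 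Applying this with $\phi = 1-\alpha_{s\ast}$ on $K_0(S)$ and on $K_1(S)$, and adding ranks across each of the two short exact sequences above, yields
\[
\rank K_0(R^S) = \rank\ker(1-\alpha_{s\ast}|_{K_0(S)}) + \rank\ker(1-\alpha_{s\ast}|_{K_1(S)}),
\]
\[
\rank K_1(R^S) = \rank\ker(1-\alpha_{s\ast}|_{K_1(S)}) + \rank\ker(1-\alpha_{s\ast}|_{K_0(S)}),
\]
and these are manifestly equal. The identical argument, with $U(X,\varphi,P)$ and $\alpha_u$ in place of $S(X,\varphi,Q)$ and $\alpha_s$, proves the second equality.

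There is no genuine obstacle here; the one thing to be slightly careful about is invoking the Pimsner–Voiculescu sequence, which requires the actions $\alpha_s$ and $\alpha_u$ to be honest $\Z$-actions on separable $C^\ast$-algebras — both facts are in hand from Section 3. The finite-rank hypothesis is only needed so that $\rank\ker = \rank\cok$ is a legitimate numerical identity; without it, one could still extract the equality of ranks in a purely formal sense by truncating to torsion-free quotients, but the statement would be vacuous. It is worth remarking that this argument uses no duality at all, and in particular does not verify the hypothesis of Proposition~\ref{isomorphism} on its own — that would require matching the ranks of $R^S$ and $R^U$, which is where the duality theorem (Theorem~\ref{thm:main}) enters.
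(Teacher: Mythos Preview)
Your proof is correct and follows essentially the same route as the paper: both arguments invoke the Pimsner--Voiculescu six-term sequence, extract the two short exact sequences for $K_0$ and $K_1$ of the crossed product, and then use that $\rank\ker(1-\phi) = \rank\cok(1-\phi)$ for an endomorphism of a finite-rank abelian group. The only cosmetic difference is that the paper tensors with $\Q$ at the outset and works with vector spaces, whereas you track ranks directly; note that your phrase ``finitely generated abelian group'' should read ``finite-rank abelian group'' to match the hypothesis, but the rank identities you use remain valid in that generality.
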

\begin{proof}
 We will work out the case of $R^U(X,\varphi,P)$, the case of $R^S(X,\varphi,Q)$ being similar.  

Consider the Pimsner-Voiculescu sequence tensored with $\Q$,
\begin{diagram}
K_0(U(X,\varphi,P))\otimes\Q & \rTo^{1-\phi_0}  & K_0(U(X,\varphi,P))\otimes\Q & \rTo^\alpha K_0(R^U(X,\varphi,P))\otimes\Q \\
\uTo^{\delta_1}    &       &         & \dTo_{\delta_0} \\
K_1(R^U(X,\varphi,P))\otimes\Q & \lTo^\beta  & K_1(U(X,\varphi,P))\otimes\Q & \lTo^{1-\phi_1}  K_1(U(X,\varphi,P))\otimes\Q. \\
\end{diagram}
One checks directly that 
\begin{equation}
\label{K0}
 \begin{aligned}
  K_0(R^U(X,\varphi,P))\otimes\Q = \image \alpha \oplus  \cok \alpha \\ = \cok(1-\phi_0) \oplus \ker(1-\phi_1)
 \end{aligned}
\end{equation}
Similarly, 
\begin{equation}
\label{K1}
     K_1(R^U(X,\varphi,P))\otimes\Q = \cok(1-\phi_1) \oplus \ker(1-\phi_0).
 \end{equation}
But one obtains from,
\[ 0 \rightarrow \ker(1-\phi_0) \rightarrow K_0(U(X,\varphi,P))\otimes\Q \xrightarrow{1-\phi_0} K_0(U(X,\varphi,P))\otimes\Q \rightarrow \cok(1-\phi_0) \rightarrow 0 \]
that 
\begin{equation}
\begin{aligned}
 K_0(U(X,\varphi,P))\otimes\Q = \image(1-\phi_0) \oplus \cok(1-\phi_0) \\ = \image(1-\phi_0) \oplus \ker(1-\phi_0).
\end{aligned}
\end{equation}
Since $K_0(U(X,\varphi,P))$ is assumed to be of finite rank it follows that $\rank(\ker(1-\phi_0)) = \rank(\cok(1-\phi_0))$ and similarly $\rank(\ker(1-\phi_1)) = \rank(\cok(1-\phi_1))$.  Plugging these into (\ref{K0}) and (\ref{K1}) yields the result.
\end{proof}

\subsection{Dynamical duality and the Baum-Connes conjecture}

There are relations between the noncommutative duality we have been discussing and the Baum-Connes and Novikov conjectures in topology.  We consider a setting in which precise statements can be made.  Let $\Gamma$ be a torsion free, finitely presented group.  In this case, the Baum-Connes map, after tensoring with $\Q$, can be identified with 
\begin{equation}
 \mu \te \Q : K_*(B\Gamma) \te \Q \to K_*(C_r^*(\Gamma)) \te \Q,
\end{equation}
where the map $\mu \te \Q$ is obtained by taking Kasparov product with the class of the Mishchenko line bundle $\delta_{\Gamma} \in KK(\C, C_0(B\Gamma) \te C_r^*(\Gamma)) $.  Thus, rational version of the Baum-Connes conjecture here is equivalent to $C_0(B\Gamma)$ being (rationally) a Spanier-Whitehead dual to the noncommutative algebra $C_r^*(\Gamma)$.  Strictly speaking, it is the Baum-Connes conjecture as obtained by the Dirac-dual Dirac method, since just having the map $\mu \te \Q$ an isomorphism might not imply the existence of the K-homology duality class.

 In many cases where the Baum-Connes conjecture has been proved, use is made of nonpositive curvature.  This often provides what is needed to define a K-homology duality class which will give an inverse to the Baum-Connes map.  The duality we study in the present paper is based on hyperbolic dynamics.  On the other hand, in \cite{CGM}, and many other works, injectivity is proved for hyperbolic groups.  Indeed, there is a relation between the duality obtained here from hyperbolic dynamics and that from hyperbolic groups.  Results in this direction have been worked out in the thesis of Emerson and the paper by Higson, \cite{Hig1}.  We will state a result in a special case.

\begin{theorem}[\cite{Eme2,Hig1,Spi,Ana}]
 Let $\Gamma$ be a Fuchsian group with such that $D/\Gamma$ is a compact, oriented surface, and suppose that the boundary of $\Gamma$ is $S^1$.  Then one has the following commutative diagram.
\begin{diagram}
 KK^{\Gamma}(C_0(D),\C) & \rTo{\mu} & KK(\C, C^*_r(\Gamma))\\
\dTo{\partial}                    &           &   \dTo{i_*} \\
KK^1(C(\partial \Gamma) \rtimes \Gamma, \C) & \rTo{\mathcal{E} \te_{C(\partial \Gamma) \rtimes \Gamma}}  & KK(\C,C(\partial \Gamma) \rtimes \Gamma)
\end{diagram}
where $\mathcal{E} \in KK^1(C(\partial \Gamma) \rtimes \Gamma)\te C(\partial \Gamma) \rtimes \Gamma), \C) $ is the element constructed in \cite{Eme2}.
\end{theorem}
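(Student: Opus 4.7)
The plan is to view the diagram as a consequence of the naturality of the Kasparov product with respect to the boundary map of the $\Gamma$-equivariant short exact sequence coming from the compactification of the Poincar\'e disk by its ideal boundary:
$$0 \to C_0(D) \to C(\overline{D}) \to C(S^1) \to 0,$$
where $\overline{D} = D \cup S^1$ and $S^1 = \partial\Gamma$. Since the $\Gamma$-action on $\overline{D}$ is amenable (proper on the interior, amenable on $\partial\Gamma$ as the action of a hyperbolic group on its boundary), the reduced crossed product preserves exactness, giving
$$0 \to C_0(D) \rtimes_r \Gamma \to C(\overline{D}) \rtimes_r \Gamma \to C(\partial\Gamma) \rtimes_r \Gamma \to 0.$$
Via Kasparov descent, and using that the $\Gamma$-action on $D$ is free, proper, and cocompact, one has $KK^\Gamma(C_0(D), \C) \cong KK(C_0(D) \rtimes_r \Gamma, \C)$, and under this identification the left vertical map $\partial$ of the theorem is exactly the connecting homomorphism of the extension in K-homology.

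Next I would rewrite $\mu$ and $i_*$ in terms of this extension. The $\Gamma$-equivariant unital inclusion $\C \hookrightarrow C(\overline{D})$ descends to an inclusion $j\colon C^*_r(\Gamma) \hookrightarrow C(\overline{D}) \rtimes_r \Gamma$, and composing with the quotient to $C(\partial\Gamma) \rtimes_r \Gamma$ reproduces the map $i$ of the theorem. The assembly map $\mu$ is Kasparov product with the Mishchenko line bundle $\delta_\Gamma$; because $D$ is a model for $\underline{E\Gamma}$, this class admits a canonical lift to a class over $C(\overline{D}) \rtimes_r \Gamma$. Consequently, $i_* \circ \mu(x)$ is computed by first pulling $x$ back to a class over $C(\overline{D}) \rtimes_r \Gamma$ through $j$ and then pushing down to the quotient.

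The main algebraic input is the standard compatibility between Kasparov products and extensions: if $\tau \in KK^1(A, I)$ is the class of an extension $0 \to I \to E \to A \to 0$, then the connecting map in K-homology is $\partial(y) = \tau \otimes_I y$, and this operation commutes with all auxiliary Kasparov products in the usual six-term sense. Applied here together with the Mishchenko class, this gives
$$\partial(x) \otimes_{C(\partial\Gamma) \rtimes_r \Gamma} \mathcal{E}' \;=\; i_* \circ \mu(x),$$
where $\mathcal{E}'$ is the K-homology class of the compactification extension paired with the Dirac/fundamental class of $D/\Gamma$.

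The main obstacle is the final identification of $\mathcal{E}'$ with Emerson's class $\mathcal{E} \in KK^1(C(\partial\Gamma)\rtimes\Gamma \otimes C(\partial\Gamma)\rtimes\Gamma, \C)$ constructed in \cite{Eme2}. Emerson produces $\mathcal{E}$ from a Toeplitz-type extension built out of the $\Gamma$-action on $\partial\Gamma$, and one must match this against the class arising naturally from the disk compactification. This requires careful bookkeeping with the hyperbolic Dirac operator on $D$, Poincar\'e duality on the surface $D/\Gamma$, and the Bott/dimension shift between even and odd KK-degrees, together with verification that the diagonal structure of $\mathcal{E}$ on $C(\partial\Gamma)\rtimes\Gamma \otimes C(\partial\Gamma)\rtimes\Gamma$ is compatible with the quotient map from $C(\overline{D}) \rtimes_r \Gamma$. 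Once this identification---essentially the substance of the result in \cite{Eme2} that underlies the theorem---is in place, commutativity of the diagram follows formally from associativity of the Kasparov product and naturality of the boundary map.
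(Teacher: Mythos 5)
The paper does not actually supply a proof of this theorem; it is stated as a result borrowed from \cite{Eme2,Hig1,Spi,Ana}, with the remark that results in this direction are worked out in the thesis of Emerson and in \cite{Hig1}. So there is no internal argument in the paper against which to compare your proposal, and it must be judged on its own merits.

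Your reconstruction follows the framework the references actually use: the compactification short exact sequence $0 \to C_0(D) \to C(\overline{D}) \to C(\partial\Gamma) \to 0$, exactness of the reduced crossed product (via amenability of the boundary action, hence exactness of $\Gamma$), the Kasparov descent isomorphism $KK^\Gamma(C_0(D),\mathbb{C}) \cong KK(C_0(D)\rtimes_r\Gamma, \mathbb{C})$ for the free proper cocompact action, the identification of $\partial$ with the connecting map of the crossed-product SES, and naturality of the connecting map under Kasparov products. That outer scaffolding is sound. The genuine gap --- which you flag yourself --- is the final identification of your class $\mathcal{E}'$ (the boundary class of the compactification SES paired with the Dirac/fundamental class of the $\mathrm{Spin}^c$ surface $D/\Gamma$) with Emerson's class $\mathcal{E}$, which is constructed very differently, as the $K$-homology class of a Fock/Toeplitz-type extension over the boundary. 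You call this ``essentially the substance of the result in [Eme2],'' but it is not auxiliary bookkeeping: it \emph{is} the theorem, and as written your argument merely reduces the stated diagram to a different unproven assertion. To close the gap one would need to (i) recall Emerson's construction of $\mathcal{E}$ concretely, (ii) prove it coincides, after composing with the even Poincar\'e duality class of the two-dimensional surface $D/\Gamma$ and tracking the Bott/degree shift, with the connecting-map construction, and (iii) verify compatibility with the tensor-square structure on $C(\partial\Gamma)\rtimes\Gamma \otimes C(\partial\Gamma)\rtimes\Gamma$. None of this is carried out, so the proposal is a plausible outline of how the proof should organize itself, not a proof.
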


Although the lower map was motivated by the dynamical duality, the explicit connection is not apparent.  In the case at hand, and possibly in much more generality, it follows from \cite{Spi} or \cite{Ana}, that there is a Smale space $(X,\varphi)$ whose Ruelle algebras are isomorphic to the crossed product $C(\partial \Gamma) \rtimes \Gamma$.  This can be verified by using the Kirchberg-Phillips theorem and computing the K-theory groups.  What is not yet known is whether there are naturally defined isomorphisms for the vertical arrows making the diagram below commutative. 

\begin{diagram}
 KK^1(C(\partial \Gamma) \rtimes \Gamma, \C) & \rTo   & KK(\C,C(\partial \Gamma) \rtimes \Gamma)\\
\dTo{\cong} & & \dTo{\cong} \\
KK^1(R^U(X,\varphi,P), \C)) & \rTo & KK(\C,R^S(X,\varphi,Q)),
\end{diagram}
 Note that, once  an isomorphism is chosen for the left arrow, then there is a corresponding one defined for the right, but at present there is no geometrical way to obtain them. 

We note that S. Gorokhovsky, in his thesis, showed that Paschke duality also fits in the present setting.  Indeed, the Paschke dual of a \csa~ $A$ is a Spanier-Whitehead dual in the sense described here.

\section{The K-theory duality class}
\label{sec:delta}

We give a description of the duality class $\delta$ in $KK(\mathscr{S},R^S(X,\varphi,Q) \tensor R^U(X,\varphi,P))$ for the Ruelle algebras. Let $P$ and $Q$ be $\varphi$-invariant sets of periodic points with $P \cap Q = \emptyset$.

Before we begin with the technical details, let us explain the underlying idea of the construction. Consider the product groupoid $\gs \times \gu$ which is equivalent to the groupoid $\gh$ in the sense of Muhly, Renault, and Williams \cite{MRW}. Since the groupoid $\gh$ is an \etale groupoid with compact unit space, namely $X$ itself, its groupoid $C^\ast$-algebra, $H(X,\varphi)$, is unital. Thus, $K_0(H(X,\varphi))$ has a canonical element determined by the class of the identity. The above equivalence of groupoids implies that $S(X,\varphi,Q) \tensor U(X,\varphi,P)$ is Morita equivalent to $H(X,\varphi)$ and we construct a projection in $S(X,\varphi,Q) \tensor U(X,\varphi,P)$ corresponding to the class of the identity in $H(X,\varphi)$. For details regarding the Morita equivalence above see \cite{Put1}.

\begin{definition}
Suppose that $\mathcal{F} = \{ f_{1}, f_{2}, \ldots, f_{K} \}$ are continuous, non-negative functions on $X$  and $G = \{ g_{1}, \cdots, g_{K} \}$ 
is a subset of $X^h(P,Q) = X^s(P) \cap X^u(Q)$. For $0 < \epsilon \leq \ep'_{X}$, we say that $(\mathcal{F}, G)$ is an $\ep$-partition of $X$ if
\begin{enumerate}
\item the squares of the functions in $\mathcal{F}$ form a partition of unity in $C(X)$; that is,
\[ \sum_{k=1}^{K} f_{k}^{2}  =  1, \]
\item the elements of $G$ are all distinct,
\item the support of $f_{k}$ is contained in $B(g_{k}, \ep/2)$, for each $ 1 \leq k \leq K$.
\end{enumerate}
\end{definition}

\begin{lemma}\label{lemma:epsil_part}
There exists $(\mathcal{F}, G)$,  an $\ep'_{X}$-partition of $X$ such that
\[ (\mathcal{F} \circ \varphi^{-1}, \varphi(G))=(\{f_k \circ \varphi^{-1} \mid 1 \leq k \leq K\},\{\varphi(g_k) \mid 1\leq k \leq K\}) \]
is also an $\ep'_{X}$-partition of $X$. Moreover, $G$ can be chosen so that $G \cap \varphi(G) = \emptyset$.
\end{lemma}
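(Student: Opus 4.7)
The plan is to build a sufficiently fine partition whose supports remain small after applying $\varphi$. First, by uniform continuity of $\varphi$, I would choose $0 < \delta \leq \ep'_{X}$ such that $\varphi(B(x,\delta/2)) \subseteq B(\varphi(x),\ep'_{X}/2)$ for every $x \in X$. Then the $\delta$-smallness of the original partition automatically yields $\ep'_{X}$-smallness of the shifted partition, since $\supp(f_{k} \circ \varphi^{-1}) = \varphi(\supp f_{k})$.

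Using compactness, cover $X$ by balls $B(x_{1},\delta/4),\ldots,B(x_{K},\delta/4)$. Since $X^{h}(P,Q)$ is dense in $X$ by irreducibility (cf.\ \cite{Rue1}), and any irreducible Smale space has no isolated points (unless it consists of a single periodic orbit, in which case the lemma is easily handled directly), every such ball meets $X^{h}(P,Q)$ in infinitely many points. I would perturb each center to some $g_{k} \in X^{h}(P,Q) \cap B(x_{k},\delta/4)$, chosen inductively so that $g_{k}$ avoids the finite set of fixed points of $\varphi$ together with $\bigcup_{j<k}\{g_{j},\varphi(g_{j}),\varphi^{-1}(g_{j})\}$. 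Since these exclusions are finite while the available set is infinite, such a choice always exists. The triangle inequality gives $B(x_{k},\delta/4) \subseteq B(g_{k},\delta/2)$, so the enlarged balls still cover $X$.

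Next, fix any partition of unity $\{\rho_{k}\}$ in $C(X)$ subordinate to $\{B(g_{k},\delta/2)\}$ and set $f_{k} := \sqrt{\rho_{k}}$. Then $(\mathcal{F},G)$ is an $\ep'_{X}$-partition: $\sum f_{k}^{2} = \sum \rho_{k} = 1$, the $g_{k}$ are distinct by construction, and $\supp f_{k} \subseteq B(g_{k},\delta/2) \subseteq B(g_{k},\ep'_{X}/2)$. For the shifted pair $(\mathcal{F}\circ\varphi^{-1},\varphi(G))$, condition (1) is immediate; the $\varphi(g_{k})$ remain distinct points of $X^{h}(P,Q)$ because $X^{s}(P)$ and $X^{u}(Q)$ are $\varphi$-invariant; and $\supp(f_{k}\circ\varphi^{-1}) = \varphi(\supp f_{k}) \subseteq \varphi(B(g_{k},\delta/2)) \subseteq B(\varphi(g_{k}),\ep'_{X}/2)$ by the choice of $\delta$. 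Finally, the inductive avoidance of $\varphi(g_{j})$, $\varphi^{-1}(g_{j})$, and fixed points is exactly $G \cap \varphi(G) = \emptyset$.

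The main obstacle is threading three constraints at once: landing the centers inside $X^{h}(P,Q)$, preserving the cover after perturbation, and enforcing $G \cap \varphi(G) = \emptyset$. The construction above resolves this by choosing the initial cover at scale $\delta/4$ so there is room to perturb into a ball of radius $\delta/2$, and by reducing the orbit-disjointness condition to a finite exclusion at each inductive step, which is harmless because $X^{h}(P,Q)$ meets every nonempty open set in infinitely many points.
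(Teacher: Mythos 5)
Your proof is correct and follows essentially the same route as the paper's: fix a scale by uniform continuity of $\varphi$, take a finite cover of $X$ by small balls, perturb the centers into the dense set $X^{h}(P,Q)$, and build the partition of unity by subordination and square roots. The one substantive difference is that you actually justify the final clause $G \cap \varphi(G) = \emptyset$, which the paper's proof leaves unaddressed; your inductive exclusion of $\{g_{j}, \varphi(g_{j}), \varphi^{-1}(g_{j})\}_{j<k}$ at each step, together with the observation that every nonempty open set meets $X^{h}(P,Q)$ in infinitely many points, fills that gap cleanly. A minor simplification you could have used: since $P$ and $Q$ are disjoint $\varphi$-invariant sets of periodic orbits, $X^{h}(P,Q) = X^{s}(P) \cap X^{u}(Q)$ contains no periodic points at all (the paper notes this later), so you need not separately exclude fixed points of $\varphi$, and the ``single periodic orbit'' degenerate case you set aside simply does not arise once $P \neq Q$ are both nonempty.
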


\begin{proof}
Choose $\ep^\prime_X > \ep^\prime > 0$ small enough that, for any $x$ in $X$, $\varphi(B(x,\ep^\prime/2)) \subseteq B(\varphi(x),\ep^\prime_X/2)$. Let $U_x = B(x,\ep^\prime/4)$ so that $\{U_x\}_{x \in X}$ covers $X$. Since $X$ is compact there is a finite subcover, say $\{U_k\}_{k=1}^K$. Now a partition of unity subordinate to $\{U_k\}_{k=1}^K$ exists \cite{BT} and we define $\mathcal{F}=\{f_1,f_2, \cdots,f_K\}$ to be the square roots of these functions. Since $X^h(P,Q)$ is dense, we may choose points $g_k$ in $X^h(P,Q)$ to be within $\ep^\prime/4$ from the center of each ball $U_k$. Now the support of each function in $\mathcal{F}$ is still contained in a ball of radius $\ep^\prime/2$. Therefore, we have an $\ep'_{X}$-partition $(\mathcal{F}, G)$ such that $(\mathcal{F} \circ \varphi^{-1}, \varphi(G))$ is also an $\ep'_{X}$-partition.
\end{proof}

Now, for $0 < \ep \leq \ep^\prime_X$, let $(\mathcal{F},G)$ be an $\ep$-partition and define a function $p_G$ on $\gs \times \gu$ by setting
\[ p_G((x,x'), (y,y')) = f_i([x,y])f_j([x',y']), \]
for $(x,x') \in \gs, (y,y') \in \gu$, if, for some $i,j$, 
\[ x \in X^{u}(g_{i}, \ep), y \in X^{s}(g_{i}, \ep), x' \in X^{u}(g_{j}, \ep), y' \in X^{s}(g_{j}, \ep),  [x,y]=[x',y'] \] 
and to be zero otherwise. Notice  that  if a pair $i, j$ exist for a given $((x, x'), (y, y'))$, then it is unique, since 
$g_{i} = [y, x]$ and $g_{j} = [y', x']$.

\begin{lemma}\label{lemma:p_in_S_U}
Let $0<\ep \leq \ep'_{X}$ and let $(\mathcal{F}, G)$ be an $\epsilon$-partition. Then $p_G$ is in $S(X,\varphi,Q) \tensor U(X,\varphi,P)$.
\end{lemma}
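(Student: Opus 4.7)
The plan is to decompose $p_G$ into a finite sum of pieces, each supported in a product of basic \'etale neighbourhoods in $\gs \times \gu$, and then to approximate each piece by finite sums of elementary tensors in $C_c(\gs) \tensor C_c(\gu)$. Since $G$ is finite and, at each point, the index pair $(i,j)$ is uniquely determined (as noted just after the definition of $p_G$), we may write $p_G = \sum_{i,j} p_{ij}$ as a finite sum, where $p_{ij}$ is the restriction of $p_G$ to the locus where the $(i,j)$-conditions hold and is zero elsewhere. It suffices to show each $p_{ij}$ lies in $S \tensor U$.

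Fix a pair $(i,j)$ for which $p_{ij}$ is non-trivial, and set $z = [x,y] = [x',y']$ on its support. The conditions $x \in X^u(g_i,\ep)$ and $y \in X^s(g_i,\ep)$ give $[x,g_i]=x$ and $[g_i,y]=y$, and hence $[z,g_i] = [[x,y],g_i] = [x,g_i] = x$ and $[g_i,z] = [g_i,[x,y]] = [g_i,y] = y$ via the bracket axioms $[[a,b],c]=[a,c]$ and $[a,[b,c]]=[a,c]$. Applying the same axioms to $z=[x',y']$ then yields
\[ x = [z,g_i] = [x',g_i] \qquad \text{and} \qquad y = [g_i,z] = [g_i,y']. \]
Consequently, the support of $p_{ij}$ is contained in $V^s_{ij} \times V^u_{ij}$, where
\[ V^s_{ij} = V^s([g_j,g_i],\, g_j,\, h^s_{ij},\, \ep) \quad \text{with} \quad h^s_{ij}(x') := [x',g_i], \]
\[ V^u_{ij} = V^u([g_i,g_j],\, g_j,\, h^u_{ij},\, \ep) \quad \text{with} \quad h^u_{ij}(y') := [g_i,y']. \]
These are precisely the $N=0$ instances of the basic \'etale sets from Lemma~\ref{stable nbhd}, since axiom~3 gives $[x',[g_j,g_i]] = [x',g_i]$ and $[[g_i,g_j],y'] = [g_i,y']$.

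Parametrising $V^s_{ij} \times V^u_{ij}$ by the second coordinates $(x',y') \in X^u(g_j,\ep) \times X^s(g_j,\ep)$, the function $p_{ij}$ pulls back to the continuous compactly supported function $F_{ij}(x',y') := f_i([x',y'])\,f_j([x',y'])$ on the product. Using $C_0(X^u(g_j,\ep) \times X^s(g_j,\ep)) \cong C_0(X^u(g_j,\ep)) \tensor C_0(X^s(g_j,\ep))$, approximate $F_{ij}$ uniformly by finite sums $\sum_k \xi_k(x') \eta_k(y')$ with $\xi_k, \eta_k$ continuous and compactly supported. Extending each $\xi_k$ by zero off $V^s_{ij}$, and each $\eta_k$ by zero off $V^u_{ij}$, produces elements of $C_c(\gs)$ and $C_c(\gu)$. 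Because functions supported on a single \'etale bisection have $C^\ast$-norm equal to their supremum norm, uniform convergence passes to convergence in the $C^\ast$-tensor norm, placing $p_{ij}$, and hence $p_G$, in $S(X,\varphi,Q) \tensor U(X,\varphi,P)$. The main subtlety is the bracket computation giving $x = [x',g_i]$ and $y = [g_i,y']$ throughout the support; this is what identifies the support with a product of \'etale bisections and reduces the lemma to the classical commutative tensor product approximation.
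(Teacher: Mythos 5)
Your proof is correct and follows essentially the same strategy as the paper's: fix a pair $(i,j)$, use the bracket axioms to show the $(i,j)$-locus is a graph of local homeomorphisms over two of the four coordinates, and then approximate the pulled-back function $f_i(z)f_j(z)$ uniformly by finite sums of elementary tensors supported on the corresponding bisections. The only cosmetic difference is that you parametrise by $(x',y') \in X^u(g_j,\ep)\times X^s(g_j,\ep)$ while the paper parametrises by $(x,y') \in X^u(g_i,\ep)\times X^s(g_j,\ep)$ (so your $h^s_{ij}$ is the inverse of the paper's $h^s$, and your $h^u_{ij}$ coincides with the paper's $h^u$), which does not change the substance of the argument.
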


\begin{proof}
Let us fix a pair $i,j$ and suppose there exists $(x,x^\prime) \in \gs$ and $(y,y^\prime) \in \gu$ such that
\[ x \in X^{u}(g_{i}, \ep), y \in X^{s}(g_{i}, \ep), x' \in X^{u}(g_{j}, \ep), y' \in X^{s}(g_{j}, \ep),  [x,y]=[x',y']. \]
We note that $[g_i,g_j]$ is defined and is stably equivalent to $g_i$ and unstably equivalent to $g_j$. By lemma \ref{stable nbhd} there are local homeomorphisms $h^s:X^u(g_i,\ep) \rightarrow X^u([g_i,g_j],\ep)$ and $h^u:X^s(g_j,\ep) \rightarrow X^s([g_i,g_j],\ep)$ defined by
\begin{eqnarray}
\nonumber
h^s(x) & = & [x,[g_i,g_j]]=[x,g_j] \\
\nonumber
h^u(y^\prime) & = & [[g_i,g_j],y^\prime] = [g_i,y^\prime]
\end{eqnarray}
It is immediate that if we let $x^\prime = h^s(x)$ and $y = h^u(y^\prime)$ then the points satisfy the conditions above. On the other hand, if $((x,x^\prime),(y,y^\prime))$ satisfy the conditions then we have
\begin{eqnarray}
\nonumber
x^\prime & = & [[x^\prime,y^\prime],x^\prime] = [[x,y],x^\prime] = [x,x^\prime] = [x,g_j] = h^s(x) \\
\nonumber
y & = & [y,[x,y]] = [y,[x^\prime,y^\prime]] = [y,y^\prime] = [g_i,y^\prime] = h^u(y^\prime).
\end{eqnarray}
This shows that points satisfying the conditions are realized by local homeomorphisms, one on the local unstable set of $g_i$ and one on the local stable set of $g_j$.

Set $\ep^\prime > 0$. Consider the function on $X^u(g_i,\ep) \times X^s(g_j,\ep)$ sending $(x,y^\prime)$ to $f_i([x,y])f_j([x',y'])$. It is clearly a continuous function of compact support so that it can be uniformly approximated within $\ep^\prime$ by a function of the form
\[ \sum_{k=1}^{K_{i,j}} a_{i,j,k}(x,x^\prime)b_{i,j,k}(y,y^\prime) \]
where, for each fixed $k$, we have $a_{i,j,k}$ in $C_c(\gs)$ and $b_{i,j,k}$ in $C_c(\gu)$. If there exists no $((x,x^\prime),(y,y^\prime))$ for a fixed $i,j$ we define the above sum to be zero. Now it follows that 
\[ \sum_{i,j} \sum_{k=1}^{K_{i,j}} a_{i,j,k} \tensor b_{i,j,k} \]
is within $\ep^\prime$ of $p_G$ in norm. This completes the proof.
\end{proof}

In the sequel, it will be convenient to have a description of the operator $p_G$ on the Hilbert space $\ell^2(X^h(P,Q)) \tensor \ell^2(X^h(P,Q))$, in terms of our usual basis, $\{ \delta_{w} \tensor \delta_{z} \mid w, z \in X^h(P,Q) \}$. We also introduce a standard convention that the bracket map returns the empty set when the bracket of two points is undefined. Of course, any operator applied to the dirac delta function of the empty set will return zero and we declare that any function of the empty set is also zero. This convention will simplify many of the upcoming formulations.

\begin{lemma}\label{lemma:p_op}
Let $0<\ep \leq \ep'_{X}$ and let $(\mathcal{F}, G)$ be an $\ep$-partition. Suppose $w,z$ are in $X^h(P,Q)$, then we have
\[ p_G (\delta_{w} \tensor \delta_{z}) = 
f_{k}([w,z]) \sum_{i=1}^{K} f_{i}([w,z])  \delta_{[w,g_{i}]} \tensor \delta_{[g_{i},z]} \]
if there exists a $1 \leq k \leq K$, such that $w \in X^u(g_{k}, \ep), z \in X^{s}(g_{k}, \ep)$ and is zero if there is no such $k$. (If the $k$ exists, it is unique, for given $w, z$. The expression on the right makes sense using our standard convention).
\end{lemma}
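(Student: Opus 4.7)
My plan is to reduce the statement to a matrix coefficient computation on the orthonormal basis $\{\delta_x \te \delta_y : x, y \in X^h(P, Q)\}$ of $\h \te \h$, then unpack the definition of $p_G$ using the bracket axioms. First, using Lemma \ref{lemma:p_in_S_U} to approximate $p_G$ in norm by finite sums $\sum_\alpha a_\alpha \te b_\alpha$ with $a_\alpha \in C_c(\gs)$ and $b_\alpha \in C_c(\gu)$, combined with the basic action formulas in Lemmas \ref{S_basic_function} and \ref{U_basic_function} (which give $\ip{a\delta_w}{\delta_{x'}} = a(x', w)$ when $(x', w) \in \gs$, and similarly for $b$), I obtain
\[ \ip{p_G(\delta_w \te \delta_z)}{\delta_{x'} \te \delta_{y'}} = p_G\bigl((x', w), (y', z)\bigr) \]
for any $(x', w) \in \gs$ and $(y', z) \in \gu$, and zero on all other basis elements. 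Thus $p_G(\delta_w \te \delta_z) = \sum p_G((x', w), (y', z))\, \delta_{x'} \te \delta_{y'}$, where the sum ranges over $x' \sim_s w$ and $y' \sim_u z$.

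By the defining formula for $p_G$, nonvanishing of this coefficient requires indices $i, j$ with $x' \in X^u(g_i, \ep)$, $y' \in X^s(g_i, \ep)$, $w \in X^u(g_j, \ep)$, $z \in X^s(g_j, \ep)$, and $[x', y'] = [w, z]$. The last two conditions involve $w, z$ alone, so I handle $j$ first and establish the claimed uniqueness: suppose $j$ and $j'$ both qualify. Using $g_{j'} = [g_{j'}, w]$ (from $w \in X^u(g_{j'}, \ep)$) together with the axiom $[x, [y, z]] = [x, z]$ yields
\[ [g_j, g_{j'}] = [g_j, [g_{j'}, w]] = [g_j, w] = g_j, \]
while $g_j = [z, g_j]$ (from $z \in X^s(g_j, \ep)$) together with the axiom $[[x, y], z] = [x, z]$ gives
\[ [g_j, g_{j'}] = [[z, g_j], g_{j'}] = [z, g_{j'}] = g_{j'}. \]
Hence $g_j = g_{j'}$, and by distinctness of $G$, $j = j'$; denote this unique index by $k$.

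Setting $t := [w, z]$, for each admissible $i$ the local product structure near $g_i$ forces $x' = [t, g_i]$ and $y' = [g_i, t]$; the required brackets are defined whenever $f_i(t) \neq 0$, since then $[w, z] \in B(g_i, \ep/2)$ and a short estimate via Lemma \ref{lemma:epsilon'} shows $d(w, g_i), d(z, g_i) < \ep_X$. Applying the bracket axioms once more gives $x' = [[w, z], g_i] = [w, g_i]$ and $y' = [g_i, [w, z]] = [g_i, z]$, and since $[w, g_i] \in X^s(w, \ep_X)$ and $[g_i, z] \in X^u(z, \ep_X)$, these are valid basis indices. The corresponding matrix coefficient is $f_i([x', y']) f_k([w, z]) = f_i([w, z]) f_k([w, z])$ because $[x', y'] = t$ by construction. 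Summing over $i = 1, \ldots, K$ (where terms with $f_i([w, z]) = 0$ or with undefined brackets contribute nothing via the standard convention) produces the displayed formula, and when no $k$ exists every coefficient vanishes. The main technical obstacle is the uniqueness step for $k$; once this is settled the argument reduces to a careful bookkeeping with the bracket axioms.
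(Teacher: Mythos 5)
Your proposal is correct and follows essentially the same route as the paper's proof: reduce to computing the matrix coefficients $p_G((x',w),(y',z))$ of $p_G$ on basis vectors, observe that these force $w\in X^u(g_k,\ep)$, $z\in X^s(g_k,\ep)$ for a unique $k$, and then use the bracket axioms $[[x,y],z]=[x,z]$ and $[x,[y,z]]=[x,z]$ to pin down $x'=[w,g_i]$ and $y'=[g_i,z]$ for each admissible $i$. The only divergence is the uniqueness step for $k$: you derive $g_j=g_{j'}$ by a short chain of bracket identities, while the paper observes directly that $w\in X^u(g_k,\ep)$, $z\in X^s(g_k,\ep)$ force $[z,w]=g_k$, which determines $g_k$ (hence $k$, by distinctness of $G$) uniquely. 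Both are valid; the paper's observation is the slicker one-liner, and your version makes the implicit bracket bookkeeping explicit.
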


\begin{proof}
For any $x,y$ in $X^h(P,Q)$, we compute
\begin{eqnarray*}
(p_G(\delta_w \tensor \delta_z))(x,y) & = & \sum_{x^\prime \in X^{h}(x)} \sum_{y' \in X^{h}(y)} p_G((x,x'), (y,y')) \delta_{w}(x') \delta_{z}(y') \\
   &  =  & p_G((x,w), (y,z)) \\
   &  =  &  f_{i}([x,y])f_{k}([w,z]),
\end{eqnarray*}
provided
\[ x \in X^{u}(g_{i},\ep), y \in X^{s}(g_{i},\ep), w \in X^{u}(g_{k},\ep), z \in X^{s}(g_{k},\ep),  [x,y] =[w,z] \]
and zero otherwise. If there is no $k$ such that $w \in X^{u}(g_{k},\ep), z \in X^{s}(g_{k},\ep)$, then the conclusion holds. Let us continue under the assumption that there is such a $k$ (which must be unique, since $[z,w]=g_k$ and the bracket map is (locally) unique in a Smale space). If, for some $i$,  $[w,z]$ is not in the support of $f_i$, then for any $x, y$ 
as above for which $[x,y]=[w,z]$, we have $f_{i}([x,y]) = f_{i}([w,z]) = 0$. On 
the other hand, if $[w,z]$ is in the support of $f_i$, for some $i$, then 
\begin{eqnarray*}
x & = & [x, g_{i}] = [[x,y], g_{i}] = [[w,z],g_{i}] = [w, g_{i}] \\
y & = & [g_{i}, y] = [g_{i}, [x, y]] = [g_{i}, [w,z]] = [g_{i},z].
\end{eqnarray*}
That is, for a given $i$, the choice of $x, y$ is unique. For each such $i$, we have 
\[ (p_G (\delta_{w} \tensor \delta_{z}))([w,g_{i}], [g_{i}, z]) = f_{i}([w,z])f_{k}([w,z]), \]
and the left hand side is zero for all other values of $x,y$. The conclusion follows.
\end{proof}

\begin{lemma}\label{lemma:p_proj}
Let $0<\ep \leq \ep'_{X}$. If  $(\mathcal{F},G)$ is an $\ep$-partition, then $p_G$ is a projection. If $(\mathcal{F} \circ \varphi^{-1}, \varphi(G))$ is also an $\ep$-partition, then
\[ (u \tensor u) (p_G) (u^\ast \tensor u^\ast) = p_{\varphi(G)}. \]
\end{lemma}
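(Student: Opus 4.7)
The plan is to verify the three assertions separately: first that $p_G$ is self-adjoint, then that it is idempotent, and finally the covariance identity.

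For self-adjointness I would work at the level of the defining function on $\gs \times \gu$. The formula $p_G((x,x'),(y,y')) = f_i([x,y])f_j([x',y'])$ together with its defining conditions is manifestly symmetric under the simultaneous swaps $x \leftrightarrow x'$, $y \leftrightarrow y'$ (with $i \leftrightarrow j$), and the $f_i$ are real-valued, so $p_G^* = p_G$ follows directly from the formula for the involution on $C_c(\gs) \otimes C_c(\gu)$.

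For idempotence I would compute $p_G^2(\delta_w \otimes \delta_z)$ by applying Lemma \ref{lemma:p_op} twice. The first application yields $f_k([w,z]) \sum_i f_i([w,z]) \delta_{[w,g_i]} \otimes \delta_{[g_i,z]}$. The crucial simplifications, coming from the bracket axioms, are $[[w,g_i],[g_i,z]] = [w,z]$, $[[w,g_i],g_j] = [w,g_j]$, and $[g_j,[g_i,z]] = [g_j,z]$; moreover Lemma \ref{lemma:epsilon'} places $[w,g_i]$ in $X^u(g_i,\ep_X/2)$ and $[g_i,z]$ in $X^s(g_i,\ep_X/2)$, so that the distinguished index for the second application of Lemma \ref{lemma:p_op} to $\delta_{[w,g_i]} \otimes \delta_{[g_i,z]}$ is $i$ itself. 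Substituting gives
\[ p_G^2(\delta_w \otimes \delta_z) = f_k([w,z]) \left( \sum_i f_i([w,z])^2 \right) \sum_j f_j([w,z]) \delta_{[w,g_j]} \otimes \delta_{[g_j,z]}, \]
and the partition of unity condition $\sum_i f_i^2 = 1$ collapses the bracketed factor to $1$, leaving $p_G(\delta_w \otimes \delta_z)$.

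For the covariance identity I would compute on basis vectors. Since $u^*\delta_w = \delta_{\varphi^{-1}(w)}$ and $u\delta_x = \delta_{\varphi(x)}$, applying Lemma \ref{lemma:p_op} to $\delta_{\varphi^{-1}(w)} \otimes \delta_{\varphi^{-1}(z)}$ and then the axiom $\varphi[x,y] = [\varphi(x),\varphi(y)]$ rewrites everything in terms of $\varphi^{-1}([w,z])$, $\varphi^{-1}([w,\varphi(g_i)])$, and $\varphi^{-1}([\varphi(g_i),z])$. Conjugating by $u \otimes u$ reintroduces $\varphi$ and yields
\[ (f_k \circ \varphi^{-1})([w,z]) \sum_i (f_i \circ \varphi^{-1})([w,z])\, \delta_{[w,\varphi(g_i)]} \otimes \delta_{[\varphi(g_i),z]}, \]
which is exactly the expression furnished by Lemma \ref{lemma:p_op} for the $\ep$-partition $(\mathcal{F} \circ \varphi^{-1}, \varphi(G))$ acting on $\delta_w \otimes \delta_z$. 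The main technical point I expect to require care is checking that the local distance and support conditions line up after these bracket manipulations, so that the distinguished indices in both applications of Lemma \ref{lemma:p_op} really exist and are unique; in the covariance calculation this is precisely where the hypothesis that $(\mathcal{F} \circ \varphi^{-1}, \varphi(G))$ is also an $\ep$-partition is used, guaranteeing that $\supp(f_i \circ \varphi^{-1}) \subseteq B(\varphi(g_i), \ep/2)$ so that the right-hand side really is $p_{\varphi(G)}$.
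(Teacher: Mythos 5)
Your proof is correct and follows the paper's approach: the idempotence calculation is exactly the one in the paper (apply Lemma \ref{lemma:p_op} twice, identify the distinguished index for the second application as $i$, use the bracket identities to collapse the double brackets, and invoke $\sum_i f_i^2 = 1$). You additionally make explicit the self-adjointness of $p_G$, which the paper implicitly takes for granted, and you supply the covariance computation that the paper dismisses as "a computation and is omitted"; both are welcome additions and both are carried out correctly, with the key observation in the covariance step being $\varphi$-invariance of the bracket and the matching of Lemma \ref{lemma:p_op} for the $\ep$-partition $(\mathcal{F}\circ\varphi^{-1},\varphi(G))$.
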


\begin{proof}
To show that $p_G$ is a projection we use lemma \ref{lemma:p_op} to compute $(p_G)^{2} (\delta_{w} \tensor \delta_{z})$.
First of all, we have 
\[ p_G (\delta_{w} \tensor \delta_{z}) = f_{k}([w,z]) \sum_{i=1}^{K} f_{i}([w,z])  \delta_{[w,g_{i}]} \tensor \delta_{[g_{i},z]}, \]
if $w \in X^{u}(g_{k}, \ep), z \in X^{s}(g_{k}, \ep)$ and zero otherwise. We apply  $p_G$ again, taking it through the sum and looking at each term individually. That is, for fixed $ 1 \leq i \leq K$, we must consider, for what 
$l$ is $[w,g_{i}]$ in $X^{u}(g_{l}, \ep)$ and $[g_{i}, z]$ in $X^{s}(g_{l}, \ep)$. Since $[w,g_{i}]$ is clearly in $X^{u}(g_{i})$, this can only happen for $l=i$. Using this, we obtain
\begin{eqnarray*}
&& (p_G)^{2} (\delta_{w} \tensor \delta_{z}) = f_{k}([w, z]) \sum_{i=1}^{K} f_{i}([w, z]) p_G \delta_{[w,g_{i}]} \tensor \delta_{[g_{i},z]} \\
&& \qquad \quad =  f_{k}([w, z]) \sum_{i=1}^{K} f_{i}([w, z]) f_{i}([w, z]) \sum_{j=1}^{K} f_{j}([[w,g_{i}],[g_{i},z]]) \delta_{[[w, g_{i}],g_{j}]} \tensor \delta_{[g_{j},[g_{i},z]]} \\
&& \qquad \quad =  f_{k}([w, z]) \sum_{i=1}^{K} f_{i}([w, z])^{2} \sum_{j=1}^{K}  f_{j}([w, z]) \delta_{[w, g_{j}]} \tensor \delta_{[g_{j},z]} \\
&& \qquad \quad =  f_{k}([w, z])  \sum_{j=1}^{K}  f_{j}([w, z]) \delta_{[w, g_{j}]} \tensor \delta_{[g_{j},z]}  \\
&& \qquad \quad =  p_G (\delta_{w} \tensor \delta_{z}) 
\end{eqnarray*}
The second part of the proof is a computation and is omitted.
\end{proof}

From Lemma \ref{lemma:epsil_part}, we may find $\mathcal{F} = \{ f_{1}, \ldots, f_{K} \}, G = \{g_{1}, \ldots, g_{K} \}$ such that $(\mathcal{F},G)$ and $(\mathcal{F} \circ \varphi^{-1}, \varphi(G))$ are both $\ep'_{X}$-partitions of $X$ with $G \cap \varphi(G) = \emptyset$. Since $X^h(P,Q)$ contains no periodic points we know that neither does $G$. By lemmas \ref{lemma:p_in_S_U} and \ref{lemma:p_proj}, we have that both $p_G$ and $p_{\varphi(G)}$ are projections in $S(X,\varphi,Q) \tensor U(X,\varphi,P)$. For each $0 \leq s \leq 1$, consider
the collection 
\[ \mathcal{F}_{s} = \{ (1-s)^{1/2}f_{1}, \ldots, (1-s)^{1/2}f_{K}, s^{1/2} f_{1} \circ \varphi^{-1}, \ldots, s^{1/2} f_{K} \circ \varphi^{-1} \} \]
together with the set of points
\[ G_{s} = \{ g_{1}, \ldots, g_{K}, \varphi(g_{1}), \ldots, \varphi(g_{K}) \} \]
Clearly, $(\mathcal{F}_{s},G_s)$ is an $\ep'_{X}$-partition, for all $0 \leq s \leq 1$. The important features of $p_{G_s}$ are 
\begin{enumerate}
\item $p_{G_s}$ is  a path of projections in $S(X, \varphi, Q) \otimes U(X, \varphi, P)$,
\item $p_{G_s}$ arises from the $\ep'_X$-partition  $(\mathcal{F}_{s}, G_{s})$, for all $0 \leq s \leq 1$,
\item $p_{G_0} = p_G$ and 
\item $p_{G_1} = (u \tensor u) p_G (u^\ast \tensor u^\ast) = p_{\varphi(G)}$.
\end{enumerate}
Therefore $p_G$ and $p_{\varphi(G)}$ are homotopic projections in $S(X,\varphi,Q) \tensor U(X,\varphi,P)$.

Since $p_G$ and $p_{\varphi(G)}$ are homotopic, there is a partial isometry $v$ in $S(X,\varphi,Q) \tensor U(X,\varphi,P)$ with initial projection $v^\ast v = p_G$ and final projection $vv^\ast = p_{\varphi(G)}$. By lemma \ref{lemma:p_proj} we have that $(u \tensor u) p_G (u^\ast \tensor u^\ast) = p_{\varphi(G)}$ and it is easy to check that the operator $\varrho = (u \tensor u)p_G v^\ast$ has the property $\varrho^\ast \varrho = \varrho\varrho^\ast = p_{\varphi(G)}$. Note that the operator $\varrho$ is in $R^S(X,\varphi,Q) \tensor R^U(X,\varphi,P)$ but not in $S(X,\varphi,Q) \tensor U(X,\varphi,P)$ since $u \tensor u$ is in the former but not the latter.

We are now ready to define a $\ast$-homomorphism $\delta:\mathscr{S} \rightarrow R^S(X,\varphi,Q) \tensor R^U(X,\varphi,P)$. To do this, it suffices to define a partial isometry $V$ in $R^S(X,\varphi,Q) \tensor R^U(X,\varphi,P)$ with the same initial and final projection, $V^\ast V = VV^\ast$ is a projection. Then sending $z-1$ to $V - V^\ast V$ extends uniquely to such a map. 
(To see this, we simply note that $V + (1 - V^\ast V)$ is a unitary in the unitization of the range. So there is a unique $\ast$-homomorphism mapping $z$ in $C(S^{1})$ to $V$, whose restriction to $\mathscr{S} \cong C^\ast(z-1)$ is as claimed). Since $\varrho$ in $R^S(X,\varphi,Q) \tensor R^U(X,\varphi,P)$ has the property that $\varrho^\ast \varrho = \varrho\varrho^\ast = p_{\varphi(G)}$, we obtain the required $\ast$-homomorphism, which we denote by $\delta$.

\begin{definition}\label{class_delta}
The class $\delta$ in $KK(\mathscr{S},R^S(X,\varphi,Q) \tensor R^U(X,\varphi,P))$ is defined by the $\ast$ - homomorphism $\delta$ from $\mathscr{S}$ to $R^S(X,\varphi,Q) \tensor R^U(X,\varphi,P)$ which is uniquely determined by $\delta(z-1) = \varrho - \varrho^\ast \varrho$ where
\[ \varrho = (u \tensor u)p_G v^\ast. \]
\end{definition}

\section{The K-homology duality class}
\label{sec:Delta}

For the K-homology duality class we construct an extension of $R^S(X,\varphi,Q) \tensor R^U(X,\varphi,P)$.

Recall that $\h = \ell^2(X^h(P,Q))$. From section \ref{sec:Algebras}, we have representations of $S(X,\varphi,Q)$, $U(X,\varphi,P)$, $R^S(X,\varphi,Q)$ and $R^U(X,\varphi,P)$ as bounded operators on $\h$.

The first observation is that, since these algebras are represented on the same Hilbert space, we can consider how operators coming from $S(X,\varphi,Q)$ and $U(X,\varphi,P)$ interact on $\h$. The following three Lemmas elucidate these interactions. We have used a hyperbolic toral automorphism to illustrated the main concepts in Lemma \ref{lemma:SU_cmpct} and Lemma \ref{lemma:asymp_abel} in Figures \ref{fig:HTA_ab_compact} and \ref{fig:HTA_alpha_support}, respectively.

For these three Lemmas let us fix the following elements. Assume that $a$ in $S(X,\varphi,Q)$ and $b$ in $U(X,\varphi,P)$ are both supported on basic sets; that is, for $v,w \in X^u(Q)$ and $v',w' \in X^s(P)$, let the support of $a$ be $V^s(v,w,h^s,\delta)$ and the support of $b$ be $V^u(v',w',h^u,\delta')$. Note that $\sor(a) \subseteq X^u(w,\delta)$ and $\range(a) \subseteq X^u(v,\delta)$, and $\sor(b) \subseteq X^s(w^\prime,\delta^\prime)$ and $\range(b) \subseteq X^s(v^\prime,\delta^\prime)$. See lemma \ref{S_basic_function} for further details.

\begin{lemma}[\cite{Put1}] \label{lemma:SU_cmpct}
If $a$ is in $S(X,\varphi,Q)$ and $b$ is in $U(X,\varphi,P)$, then $ab$ and $ba$ are compact operators on $\h$.
\end{lemma}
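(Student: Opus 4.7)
The goal is to show that the product of a stable-supported operator and an unstable-supported one has rank at most one on basic pieces, so that it is compact. Since $S(X,\varphi,Q)$ and $U(X,\varphi,P)$ are generated by functions supported on the basic neighbourhood base sets of Lemma \ref{stable nbhd}, and since the compact operators form a norm-closed ideal in $\bh$, it suffices to establish the claim when $a$ is supported on a single basic set $V^s(v,w,h^s,\delta)$ and $b$ is supported on a single basic set $V^u(v',w',h^u,\delta')$; the general case then follows by writing each of $a,b$ as a uniform limit of finite sums of such elementary pieces and taking a norm limit.

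Under this reduction, I would compute $ab\delta_x$ directly in the basis $\{\delta_x\}_{x\in X^h(P,Q)}$ using Lemmas \ref{S_basic_function} and \ref{U_basic_function}. Applying $b$ first gives
\[
b\delta_x = b(h^u(x),x)\,\delta_{h^u(x)},
\]
which is non-zero only for $x\in X^s(w',\delta')$, in which case $h^u(x)\in X^s(v',\delta')$. Applying $a$ then yields
\[
ab\delta_x = a\bigl(h^s(h^u(x)),h^u(x)\bigr)\,b(h^u(x),x)\,\delta_{h^s(h^u(x))},
\]
and this is non-zero only when $h^u(x)\in \sor(a)\subseteq X^u(w,\delta)$ as well. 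Thus $h^u(x)$ must lie in
\[
X^u(w,\delta)\cap X^s(v',\delta').
\]
Shrinking $\delta,\delta'$ at the outset so that both are at most $\ep_X/2$ (which is allowed because the sets $V^s(v,w,h^s,\delta)$, $V^u(v',w',h^u,\delta')$ form neighbourhood bases), Lemma \ref{lemma:bracket} says this intersection is either empty or equal to the single point $[v',w]$. Since $h^u$ is injective on its domain, there is then at most one $x$ for which $ab\delta_x\neq 0$. Hence $ab$ has rank at most one, and in particular is compact. The same argument with the roles of $a$ and $b$ interchanged, using that $X^u(v,\delta)\cap X^s(w',\delta')$ is at most one point, handles $ba$.

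Finally, for general $a\in S(X,\varphi,Q)$ and $b\in U(X,\varphi,P)$, choose sequences of finite sums $a_n = \sum_i a_{n,i}$ and $b_n=\sum_j b_{n,j}$ of basic-set-supported elements with $a_n\to a$ and $b_n\to b$ in norm. Each $a_n b_n = \sum_{i,j} a_{n,i}b_{n,j}$ is a finite sum of rank-one operators by the preceding paragraph, hence finite rank, and $a_nb_n\to ab$ in norm in $\bh$. Since $\kh$ is norm closed, $ab$ is compact; the same holds for $ba$.

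\textbf{Main obstacle.} The only genuinely non-formal step is the use of Lemma \ref{lemma:bracket} to pin down the intersection of a small local unstable set with a small local stable set as at most one point. Everything else is bookkeeping: ensuring one can shrink the basic-set parameters to invoke the bracket lemma, tracking the two local homeomorphisms carefully through the composition, and passing from basic-set-supported elements to arbitrary ones by norm density.
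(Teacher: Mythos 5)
Your proposal is correct and follows essentially the same route as the paper: reduce to $a,b$ supported on basic sets, compute $ab\delta_x$ via Lemmas \ref{S_basic_function} and \ref{U_basic_function}, and observe that the uniqueness of the bracket (Lemma \ref{lemma:bracket}) forces $X^u(w,\delta)\cap X^s(v',\delta')$ to be at most a single point, so the product is rank at most one, with the general case handled by norm density. The only cosmetic difference is that the paper disposes of $ba$ by taking adjoints, while you rerun the argument symmetrically; both work.
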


\begin{proof}
We compute, for $x$ in $X^h(P,Q)$,
\begin{eqnarray*}
a \cdot b \, \delta_x & = & a(h^s \circ h^u(x),h^u(x)) b(h^u(x),x) \delta_{h^s \circ h^u(x)}
\end{eqnarray*}
if $x \in X^s(w',\delta')$, $h^u(x) \in X^s(v',\delta')$,  $h^u(x) \in X^u(w,\delta)$, and $h^s \circ h^u(x) \in X^u(v,\delta)$. Otherwise the product is zero. In particular, the product is zero unless $\range(b) \cap \sor(a)$ is non-zero. However, uniqueness of the bracket implies that a local stable set and a local unstable set have non-trivial intersection at one point, at most. Whence, the product is zero unless $X^s(v',\delta')$ and $X^u(w,\delta)$ intersect and if they do the product is a rank one operator. Now finite sums of operators with supports as above form a dense set and therefore we obtain the compact operators by taking limits. Taking adjoints gives that $b \cdot a$ is also compact.
\end{proof}

\begin{figure}[htb]
\begin{center}
\scalebox{0.69}{
\begin{tikzpicture}
\tikzstyle{axes}=[]
\begin{scope}[style=axes]
	\draw[-] (0,0) node[below] {$(0,0)$} -- (0,10) node[above] {$(0,1)$};
	\draw[-] (0,10) -- (10,10);
	\draw[-] (10,0) node[below] {$(1,0)$} -- (10,10) node[above] {$(1,1)$};
	\draw[-] (0,0) -- (10,0);
\end{scope}
\begin{scope}[style=axes]
	\draw[|-|] (1.7,2.3) -- (3.7,3.53);
	\node at (4.6,3.6) {$X^u(w,\delta)$};
	\draw[|-|] (4.42,7.885) -- (6.42,9.115);
	\node at (7.3,9.2) {$X^u(v,\delta)$};
	\draw[-,dashed,shorten <=5pt] (2.7,2.915) -- (4.50,0);
	\draw[->,dashed,shorten >=5pt] (4.50,10) -- (5.42,8.5);
	\node at (4.4,0.8) {$h^s$};
	\pgfpathcircle{\pgfpoint{2.7cm}{2.915cm}} {2pt};
	\pgfusepath{fill}
	\node at (2.5,3.2) {$w$};
	\pgfpathcircle{\pgfpoint{5.42cm}{8.5cm}} {2pt};
	\pgfusepath{fill}
	\node at (5.55,8.15) {$v$};
	\pgfpathcircle{\pgfpoint{1.95cm}{2.45cm}} {2pt};
	\pgfusepath{fill}
	\node at (1.6,1.8) {$h^u(x)$};
	\pgfpathcircle{\pgfpoint{4.67cm}{8.03cm}} {2pt};
	\pgfusepath{fill}
	\node at (3.9,8.4) {$h^s \circ h^u(x)$};
\end{scope}[style=axes]
\begin{scope}[style=axes]
	\draw[|-|] (2.24,2) -- (1,4);
	\node at (0.9,4.3) {$X^s(v^\prime,\delta^\prime)$};
	\draw[|-|] (8.24,5.71) -- (7,7.71);
	\node at (8.5,5.2) {$X^s(w^\prime,\delta^\prime)$};
	\draw[<-,dashed,shorten >=5pt,shorten <=5pt] (1.62,3) -- (7.62,6.71);
	\node at (3.9,4.9) {$h^u$};
	\pgfpathcircle{\pgfpoint{1.62cm}{3cm}} {2pt};
	\pgfusepath{fill}
	\node at (1.2,2.8) {$v^\prime$};
	\pgfpathcircle{\pgfpoint{7.62cm}{6.71cm}} {2pt};
	\pgfusepath{fill}
	\node at (7.9,7.0) {$w^\prime$};
	\pgfpathcircle{\pgfpoint{7.95cm}{6.16cm}} {2pt};
	\pgfusepath{fill}
	\node at (8.2,6.4) {$x$};
\end{scope}[style=axes]
\end{tikzpicture}}
\caption{Hyperbolic toral automorphism: ab is a compact operator}
\label{fig:HTA_ab_compact}
\end{center}
\end{figure}
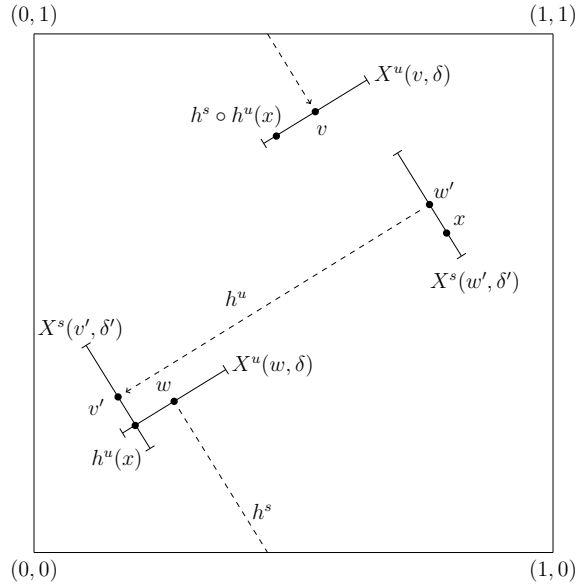

\begin{lemma}\label{SU_eventually_zero}
If $a$ is in $S(X,\varphi,Q)$ and $b$ is in $U(X,\varphi,P)$, then 
\[ \lim_{n \to +\infty} \alpha_s^{-n}(a)\cdot b = 0 \qquad \textrm{ and } \qquad \lim_{n \rightarrow +\infty} b \cdot \alpha_s^{-n}(a) = 0. \]
\end{lemma}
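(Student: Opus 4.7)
\begin{sketch}
The plan is to prove $\lim_{n\to+\infty}\alpha_s^{-n}(a)\cdot b = 0$; the claim $\lim_{n\to+\infty}b\cdot\alpha_s^{-n}(a) = 0$ then follows by taking adjoints. By continuity of multiplication, linearity of $\alpha_s$, and the density in $S(X,\varphi,Q)$ and $U(X,\varphi,P)$ of finite sums of functions supported in basic neighbourhood sets, it suffices to treat the case where $a$ has compact support in a single basic set $V^s(v,w,h^s,\delta)$ and $b$ in $V^u(v',w',h^u,\delta')$.

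With such $a$ and $b$, I would compute $(\alpha_s^{-n}(a)\cdot b)\delta_x$ directly using Lemmas \ref{S_basic_function} and \ref{U_basic_function} together with the formula for $\alpha_s$. The source of $\alpha_s^{-n}(a)$ is contained in $\varphi^{-n}(X^u(w,\delta))\subset X^u(\varphi^{-n}(w),\lambda^{-n}\delta)$, whose diameter shrinks to zero, while the range of $b$ lies in the fixed local stable set $X^s(v',\delta')$. By Lemma \ref{lemma:bracket}, any intersection of these is either empty or the single bracket point $[v',\varphi^{-n}(w)]$, so $\alpha_s^{-n}(a)\cdot b$ has rank at most one and its operator norm equals the absolute value of a single coefficient $c_n$ obtained as a product of values of $a$ and $b$ at points depending on $n$.

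To show $|c_n|\to 0$, I would argue by contradiction: suppose $|c_{n_k}|\geq\epsilon_0>0$ along a subsequence. The nonvanishing condition forces $[v',\varphi^{-n_k}(w)]$ to lie within distance $\lambda^{-n_k}\delta\to 0$ of $\varphi^{-n_k}(w)$. Since $\varphi^{-n_k}(w)\in X^u(\varphi^{-n_k}(q))$ for the periodic $q\in Q$ with $w\sim_u q$, and $Q$ is finite, further subsequencing gives a single $q^*\in Q$ with $\varphi^{-n_k}(q)=q^*$ throughout; the contraction of $\varphi^{-1}$ on local unstable sets then yields $\varphi^{-n_k}(w)\to q^*$, and the distance bound forces $[v',\varphi^{-n_k}(w)]\to q^*$ as well, so by continuity of the bracket $[v',q^*]=q^*$, i.e.\ $v'\sim_s q^*$. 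A symmetric argument on $\varphi^{n_k}(v')$ produces $p^*\in P$, and using the $\varphi$-equivariance of the bracket, $c_{n_k}$ converges to a specific value $b(q^*,x^*)\cdot a([p^*,v],[p^*,w])$ determined by $P\cup Q$ and the basic-set parameters. Since $P$ and $Q$ are finite, only finitely many such limit points arise; a further approximation of $a$ and $b$ in operator norm by functions whose supports avoid small neighbourhoods of these points then forces the limiting coefficient to vanish, contradicting $|c_{n_k}|\geq\epsilon_0$.

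The principal obstacle is the final approximation: exhibiting, with arbitrarily small loss in operator norm, compactly-supported continuous approximants of a basic function whose supports avoid a prescribed finite set of points in $X$. Standard partition-of-unity and cutoff techniques on the \'{e}tale groupoids $G^s(X,\varphi,Q)$ and $G^u(X,\varphi,P)$ should suffice, but combining this cleanly with the subsequential analysis of the coefficients $c_n$ is where the real work lies.
\end{sketch}
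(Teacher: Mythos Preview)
Your subsequence argument is on the right track but you stop one step short of the contradiction and then head off in the wrong direction. Once you have established that the intersection point $[v',\varphi^{-n_k}(w)]$ converges to some $q^{*}\in Q$ lying in $X^{s}(v',\delta')$, you are already done: by construction of $G^{u}(X,\varphi,P)$ the point $v'$ lies in $X^{s}(P)$, so $v'\sim_{s} p$ for some $p\in P$; transitivity gives $p\sim_{s} q^{*}$, and two stably equivalent periodic points must coincide (apply $\varphi$ in multiples of the product of the periods). Hence $p=q^{*}\in P\cap Q$, contradicting the standing hypothesis $P\cap Q=\emptyset$. Your proposed final ``approximation'' step never invokes $P\cap Q=\emptyset$ at all, and without that hypothesis the lemma is false (take $P=Q$); moreover the step is circular, since the finite set of limit points you wish to excise depends on the basic-set parameters $v,w,v',w'$, which themselves change when you pass to new approximants.

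The paper's proof is shorter and in fact proves more: for basic $a$ and $b$, the product $\alpha_{s}^{-n}(a)\cdot b$ is \emph{identically zero} for all large $n$, not merely small. The mechanism is the same disjointness, used up front rather than via a subsequence: since $v'\in X^{s}(P)$ and no point of $Q$ lies in $X^{s}(P)$, one can choose $\ep>0$ with $X^{u}(Q,\ep)\cap X^{s}(v',\delta')=\varnothing$. The source of $\alpha_{s}^{-n}(a)$ is contained in $X^{u}(\varphi^{-n}(w),\lambda^{-n}\delta)$, which for large $n$ sits inside $X^{u}(Q,\ep)$; hence it cannot meet $\range(b)\subseteq X^{s}(v',\delta')$ and the product vanishes. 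Your rank-one analysis and limiting-point extraction are correct but unnecessary once the disjointness is used directly.
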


\begin{proof}
We first aim to show that there exists $N$ in $\mathbb{N}$ such that, for all $n \geq N$, we have $\alpha^{-n}(a) \cdot b = 0$. Indeed, since
\[ \alpha^{-n}_s(a) \, \delta_z = a(h^s \circ \varphi^{n}(z),\varphi^{n}(z)) \delta_{\varphi^{-n} \circ h^s \circ \varphi^{n}(z)}, \]
we see that the support of $\alpha_s^{-n}(a)$ is $V^s(\varphi^{-n}(v),\varphi^{-n}(w),\varphi^{-n} \circ h^s \circ \varphi^{-n}, \lambda^{-n}\delta)$. Therefore, it follows that $\sor(\alpha_s^{-n}(a)) \subseteq X^u(\varphi^{-n}(w),\lambda^{-n}\delta)$ and $\range(\alpha_s^{-n}(a)) \subseteq X^u(\varphi^{-n}(v),\lambda^{-n}\delta)$. That is, the support of $a$ is being exponentially contracted by repeated application of $\alpha_s$. Moreover, we compute
\begin{eqnarray*}
\alpha_s^{-n}(a) \cdot b \, \delta_z & = & a(h^s \circ \varphi^{n} \circ h^u(z),\varphi^{n} \circ h^u(z)) b(h^u(z),z) \delta_{\varphi^{-n} \circ h^s \circ \varphi^{n} \circ h^u(z)}
\end{eqnarray*}
if $z \in X^s(w',\delta')$, $h^u(z) \in X^s(v',\delta')$,  and $h^u(z) \in X^u(\varphi^{-n}(w),\lambda^{-n}\delta)$. It is zero otherwise.  
Now set $\ep > 0$ small enough that $X^u(Q,\ep) \cap X^s(v',\delta')= \varnothing$, we know this is possible since $v'$ is in $X^s(P)$ while no point in $Q$ is in $X^s(P)$ since $P$ and $Q$ are mutually distinct and $\varphi$-invariant. Given $\ep > 0$, we can find an $N$ in $\mathbb{N}$, such that $X^u(\varphi^{-n}(w),\lambda^{-n}\delta) \subset X^u(Q,\ep)$ for all $n \geq \mathbb{N}$. This implies that, for all $n \geq \mathbb{N}$, we have $\alpha^{-n}(a) \cdot b = 0$. Now the general result follows since elements of $S(X,\varphi,Q)$ and $U(X,\varphi,P)$ are norm limits of linear combinations of elements with the above form. A similar argument gives the result for $b \cdot \alpha^{-n}(a)$.
\end{proof}

\begin{lemma}[\cite{Put1} \cite{Kil}] \label{lemma:asymp_abel}
For any  $a$  in $S(X,\varphi,Q)$ and  $b$  in $U(X,\varphi,P)$, we have
\begin{eqnarray*}
&& \lim_{n \rightarrow \infty} \| \alpha_s^n(a) b - b  \alpha_s^n(a) \| = 0, \\
&& \lim_{n \rightarrow \infty} \| \alpha_s^n(a) \alpha_u^{-n}(b) - \alpha_u^{-n}(b)\alpha_s^n(a) \| = 0. \\
\end{eqnarray*}
\end{lemma}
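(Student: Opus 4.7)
The two statements can be merged into one by an algebraic maneuver. Writing $\alpha_s^n(a) = u^n a u^{-n}$ and $\alpha_u^{-n}(b) = u^{-n} b u^n$, and using that conjugation by a unitary preserves the operator norm, the first commutator equals $\|a\,\alpha_u^{-n}(b) - \alpha_u^{-n}(b)\,a\|$ and the second equals $\|a\,\alpha_u^{-2n}(b) - \alpha_u^{-2n}(b)\,a\|$. Thus both statements reduce to showing that
\[
\lim_{m \to \infty} \bigl\| a\,\alpha_u^{-m}(b) - \alpha_u^{-m}(b)\,a \bigr\| = 0
\]
for $a \in S(X,\varphi,Q)$ and $b \in U(X,\varphi,P)$. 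By linearity, norm density, and the isometric bound $\|\alpha_u^{-m}(b)\| = \|b\|$, I may further assume that $a$ is supported on a basic set $V^s(v,w,h^s,\delta)$ and $b$ on $V^u(v',w',h^u,\delta')$.

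\textbf{Geometric picture.} The key dynamical observation is that $\alpha_u^{-n}(b)$ is, for large $n$, a \emph{tiny} unstable displacement. Explicitly, using the formulas from Section~\ref{sec:Algebras},
\[
\alpha_u^{-n}(b)\,\delta_z \;=\; b\bigl(h^u\varphi^n(z),\varphi^n(z)\bigr)\,\delta_{\psi_n(z)}, \qquad \psi_n(z) := \varphi^{-n}\circ h^u\circ \varphi^n(z),
\]
whenever $\varphi^n(z) \in X^s(w',\delta')$. Because $h^u(y)$ is unstably equivalent to $y$ with $d(y,h^u(y))$ uniformly bounded on the (compact) domain of $h^u$, and because $\varphi^{-1}$ contracts distances on unstable sets by $\lambda^{-1}$, one obtains $d(z,\psi_n(z)) \leq \lambda^{-n} C \to 0$ uniformly in $z$. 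Meanwhile $a$ remains a bounded stable jump $\delta_z \mapsto a(h^s(z),z)\,\delta_{h^s(z)}$ of fixed size.

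\textbf{Pointwise comparison.} Evaluating both orderings on a basis vector $\delta_z$ yields
\begin{align*}
a\,\alpha_u^{-n}(b)\,\delta_z &= b(h^u\varphi^n(z),\varphi^n(z))\; a(h^s(\psi_n(z)),\psi_n(z))\; \delta_{h^s(\psi_n(z))}, \\
\alpha_u^{-n}(b)\,a\,\delta_z &= a(h^s(z),z)\; b(h^u\varphi^n(h^s(z)),\varphi^n(h^s(z)))\; \delta_{\psi_n(h^s(z))}.
\end{align*}
For the targets, the Smale axioms $[x,[y,z]]=[x,z]=[[x,y],z]$ together with the $\varphi$-equivariance $\varphi[x,y]=[\varphi(x),\varphi(y)]$ identify $h^s(\psi_n(z)) = \psi_n(h^s(z))$ as the unique fourth vertex of the bracket-rectangle with corners $z$, $h^s(z)$, and $\psi_n(z)$. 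For the scalars, $z$ and $\psi_n(z)$ are $\lambda^{-n}C$-close by the previous step, while $\varphi^n(z)$ and $\varphi^n(h^s(z))$ are $\lambda^{-n}d(z,h^s(z))$-close because $\varphi$ contracts stable distances; uniform continuity of the compactly supported functions $a$ and $b$ therefore forces the two scalar coefficients to differ by a quantity $\varepsilon_n \to 0$ uniformly in $z$.

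\textbf{Norm bound and main obstacle.} Since both $a\,\alpha_u^{-n}(b)$ and $\alpha_u^{-n}(b)\,a$ send each $\delta_z$ to a scalar multiple of the \emph{same} basis vector, their difference is a weighted partial permutation whose operator norm equals $\sup_z|(\text{coefficient difference})| \leq \varepsilon_n \to 0$. I expect the main technical nuisance to be the rigorous justification of the target equality $h^s(\psi_n(z)) = \psi_n(h^s(z))$: one must verify that for $n$ sufficiently large, depending only on the fixed supports of $a$ and $b$, all four vertices of the bracket-rectangle lie within the $\ep'_X$-scale where the local product structure is valid and the identities of Lemma \ref{lemma:bracket} apply. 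This amounts to a bookkeeping exercise coordinating the auxiliary integers $N, N'$ implicit in the formulas for $h^s$ and $h^u$ with the iterates $\varphi^{\pm n}$, and invoking $\varphi$-equivariance of the bracket to collapse both compositions to the same fourth corner.
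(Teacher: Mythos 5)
Your reduction via conjugation by $u^{-n}$ is correct, a genuinely clean piece of bookkeeping: the unitary $u$ intertwines $\alpha_s$ and $\alpha_u$ on $\bh$, so
\[
\|\alpha_s^n(a)\,b - b\,\alpha_s^n(a)\| = \|a\,\alpha_u^{-n}(b) - \alpha_u^{-n}(b)\,a\|,\qquad
\|\alpha_s^n(a)\,\alpha_u^{-n}(b) - \alpha_u^{-n}(b)\,\alpha_s^n(a)\| = \|a\,\alpha_u^{-2n}(b) - \alpha_u^{-2n}(b)\,a\|,
\]
and both follow from a single limit in $m$. This is a neater framing than the paper's, which proves the second identity directly and then remarks that the first "is easily deduced." Your geometric picture and pointwise comparison are then exactly the content of the paper's argument, translated by the conjugation: the operators $a\,\alpha_u^{-m}(b)$ and $\alpha_u^{-m}(b)\,a$ act on each $\delta_z$ by scalar multiples of the same basis vector, the scalars converge uniformly by uniform continuity of the compactly supported kernels together with the contraction $d(z,\psi_m(z)) \to 0$, and the operator norm of the difference is the sup of the scalar discrepancy.

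The one step you defer --- that $h^s(\psi_m(z)) = \psi_m(h^s(z))$ once $m$ is large enough relative to the fixed auxiliary integers $N, N'$ in the definitions of $h^s$ and $h^u$ --- is precisely the step the paper also outsources, quoting Lemma~2.2 of \cite{Put1}; in your conjugated coordinates that cited identity reads
\[
\varphi^{n}\circ h^s \circ \varphi^{-2n} \circ h^u \circ \varphi^{n} = \varphi^{-n}\circ h^u \circ \varphi^{2n} \circ h^s \circ \varphi^{-n}
\quad\text{for all sufficiently large } n,
\]
which after left-composing with $\varphi^{-n}$ and substituting $z\mapsto\varphi^n(z)$ is exactly $h^s\circ\psi_{2n} = \psi_{2n}\circ h^s$. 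So you have correctly isolated the one non-formal ingredient; the rest of your plan matches the paper's proof, with the conjugation trick giving a cleaner and more symmetric presentation of the same argument.
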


\begin{proof}
We shall prove the second equality only, from which the first is easily deduced. Set $\ep > 0$. We compute
\[ \alpha_s^n(a)\cdot \alpha_u^{-n}(b) \, \delta_z = \]
\[ a(h^s \circ \varphi^{-2n} \circ h^u \circ \varphi^{n}(z),\varphi^{-2n} \circ h^u \circ \varphi^{n}(z)) b(h^u \circ \varphi^{n}(z),\varphi^{n}(z)) \delta_{\varphi^{n}\circ h^s \circ \varphi^{-2n} \circ h^u \circ \varphi^{n}(z)} \]
\[ \alpha_u^{-n}(b) \cdot \alpha_s^{n}(a) \, \delta_z = \]
\[ b(h^u \circ \varphi^{2n} \circ h^s \circ \varphi^{-n}(z),\varphi^{2n} \circ h^s \circ \varphi^{-n}(z)) a(h^s \circ \varphi^{-n}(z),\varphi^{-n}(z)) \delta_{\varphi^{-n}\circ h^u \circ \varphi^{2n} \circ h^s \circ \varphi^{-n}(z)}.\]
Moreover, lemma $2.2$ in \cite{Put1} states that, there exists $N$ such that for all $n \geq N$ we have,
\[ \varphi^{n}\circ h^s \circ \varphi^{-2n} \circ h^u \circ \varphi^{n}(z) = \varphi^{-n}\circ h^u \circ \varphi^{2n} \circ h^s \circ \varphi^{-n}(z).\]

Now, suppose we are given $z$ in $X^h(P,Q)$ such that $\varphi^{-n}(z) \in \sor(a)$ and $\varphi^{n}(z) \in \sor(b)$. We may define the following points:
\begin{eqnarray*}
x_1 & = & z \\
x_2 & = & \varphi^n \circ h^s \circ \varphi^{-n}(z) \\
x_3 & = & \varphi^{n}\circ h^s \circ \varphi^{-2n} \circ h^u \circ \varphi^{n}(z) = \varphi^{-n}\circ h^u \circ \varphi^{2n} \circ h^s \circ \varphi^{-n}(z) \\
x_4 & = & \varphi^{-n} \circ h^u \circ \varphi^n(z).
\end{eqnarray*}
In fact, given $\ep_1 > 0$ and any $z$ satisfying the above conditions, we can set $N$ sufficiently large that we have, for all $n \geq N$:
\begin{eqnarray*}
x_2 \in X^s(x_1,\ep_1) & & x_4 \in X^u(x_1,\ep_1), \\
x_1 \in X^s(x_2,\ep_1) & & x_3 \in X^u(x_2,\ep_1), \\
x_4 \in X^s(x_3,\ep_1) & & x_2 \in X^u(x_3,\ep_1), \\
x_3 \in X^s(x_4,\ep_1) & & x_1 \in X^u(x_4,\ep_1).
\end{eqnarray*}
We have illustrated the relationship between these points for the hyperbolic toral automorphism in Figure \ref{fig:HTA_alpha_support} on page \pageref{fig:HTA_alpha_support}. Since $a$ and $b$ are taking basis vectors to basis vectors, we have 
\[ \| \alpha_s^n(a)\cdot \alpha_u^{-n}(b) - \alpha_u^{-n}(b) \cdot \alpha_s^{n}(a) \| \]
\[= \sup_{z} | a(\varphi^{-n}(x_3),\varphi^{-n}(x_4)) b(\varphi^{n}(x_4),\varphi^{n}(x_1)) - b(\varphi^{n}(x_3),\varphi^{n}(x_2)) a(\varphi^{-n}(x_2),\varphi^{-n}(x_1)) |. \]
Now $a$ and $b$ are uniformly continuous so we may choose $N$ large enough that the above condition is satisfied (the two versions of $x_3$ are equal) and so that $\ep_1$ is sufficiently small that we have, for all $n \geq N$,
\begin{eqnarray*}
&& |a(\varphi^{-n}(x_3),\varphi^{-n}(x_4)) - a(\varphi^{-n}(x_2),\varphi^{-n}(x_1))| < \frac{\ep}{2\|b\|} \\
&& |b(\varphi^{n}(x_4),\varphi^{n}(x_1)) - b(\varphi^{n}(x_3),\varphi^{n}(x_2))| < \frac{\ep}{2\|a\|}.
\end{eqnarray*}
Now we compute
\begin{eqnarray*}
&& | a(\varphi^{-n}(x_3),\varphi^{-n}(x_4)) b(\varphi^{n}(x_4),\varphi^{n}(x_1)) - a(\varphi^{-n}(x_2),\varphi^{-n}(x_1))b(\varphi^{n}(x_3),\varphi^{n}(x_2)) | \\
&& = | a(\varphi^{-n}(x_3),\varphi^{-n}(x_4)) b(\varphi^{n}(x_4),\varphi^{n}(x_1)) - a(\varphi^{-n}(x_3),\varphi^{-n}(x_4))b(\varphi^{n}(x_3),\varphi^{n}(x_2)) \\
&& \quad + a(\varphi^{-n}(x_3),\varphi^{-n}(x_4))b(\varphi^{n}(x_3),\varphi^{n}(x_2)) - a(\varphi^{-n}(x_2),\varphi^{-n}(x_1))b(\varphi^{n}(x_3),\varphi^{n}(x_2)) | \\
&& \leq |a(\varphi^{-n}(x_3),\varphi^{-n}(x_4))||b(\varphi^{n}(x_4),\varphi^{n}(x_1)) - b(\varphi^{n}(x_3),\varphi^{n}(x_2)) | \\
&& \quad + |a(\varphi^{-n}(x_3),\varphi^{-n}(x_4)) - a(\varphi^{-n}(x_2),\varphi^{-n}(x_1))||b(\varphi^{n}(x_3),\varphi^{n}(x_2))| \\
&& < \frac{\|a\| \ep }{2 \|a\|} + \frac{\|b\| \ep }{2 \|b\|} = \ep.
\end{eqnarray*}
Therefore, we have shown that
\[ \lim_{n \rightarrow \infty} \| \alpha_s^n(a) \alpha_u^{-n}(b) - \alpha_u^{-n}(b)\alpha_s^n(a) \| = 0 \]
which completes the proof.
\end{proof}

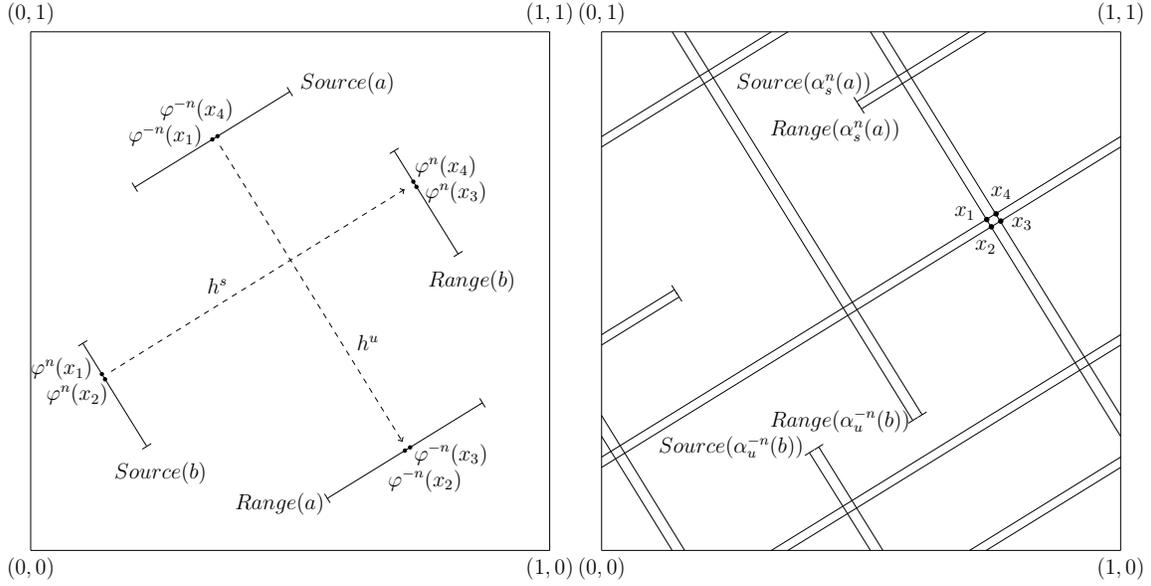
\begin{figure}[htb]
\begin{center}
\scalebox{0.69}{
\begin{tikzpicture}
\tikzstyle{axes}=[]
\begin{scope}[style=axes]
	\draw[-] (0,0) node[below] {$(0,0)$} -- (0,10) node[above] {$(0,1)$};
	\draw[-] (0,10) -- (10,10);
	\draw[-] (10,0) node[below] {$(1,0)$} -- (10,10) node[above] {$(1,1)$};
	\draw[-] (0,0) -- (10,0);
\end{scope}
\begin{scope}[style=axes]
	\draw[|-|] (2,7) -- (5,8.85);
	\node at (6.1,9.0) {$\sor(a)$};
	\draw[|-|] (5.71,1) -- (8.71,2.85);
	\node at (4.8,0.9) {$\range(a)$};
	\draw[->,dashed] (3.65,7.8) -- (7.16,2.1);
	\node at (6.5,4) {$h^u$};
\end{scope}[style=axes]
\begin{scope}[style=axes]
	\draw[|-|] (2.24,2) -- (1,4);
	\node at (2.5,1.5) {$\sor(b)$};
	\draw[|-|] (8.24,5.71) -- (7,7.71);
	\node at (8.5,5.2) {$\range(b)$};
	\draw[->,dashed] (1.55,3.45) -- (7.2,6.96);
	\node at (3.6,5.1) {$h^s$};
\end{scope}[style=axes]
\begin{scope}[style=axes]
	\pgfpathcircle{\pgfpoint{3.5cm}{7.925cm}} {1.2pt};
	\pgfusepath{fill}
	\node at (2.6,8) {$\varphi^{-n}(x_1)$};
	\pgfpathcircle{\pgfpoint{3.6cm}{7.987cm}} {1.2pt};
	\pgfusepath{fill}
	\node at (3.2,8.5) {$\varphi^{-n}(x_4)$};
	\pgfpathcircle{\pgfpoint{7.21cm}{1.925cm}} {1.2pt};
	\pgfusepath{fill}
	\node at (7.6,1.35) {$\varphi^{-n}(x_2)$};
	\pgfpathcircle{\pgfpoint{7.31cm}{1.987cm}} {1.2pt};
	\pgfusepath{fill}
	\node at (8.1,1.85) {$\varphi^{-n}(x_3)$};
\end{scope}
\begin{scope}[style=axes]
	\pgfpathcircle{\pgfpoint{1.372cm}{3.4cm}} {1.2pt};
	\pgfusepath{fill}
	\node at (0.6,3.5) {$\varphi^{n}(x_1)$};
	\pgfpathcircle{\pgfpoint{1.433cm}{3.3cm}} {1.2pt};
	\pgfusepath{fill}
	\node at (0.9,3.0) {$\varphi^{n}(x_2)$};
	\pgfpathcircle{\pgfpoint{7.372cm}{7.11cm}} {1.2pt};
	\pgfusepath{fill}
	\node at (8.0,7.4) {$\varphi^{n}(x_4)$};
	\pgfpathcircle{\pgfpoint{7.433cm}{7.01cm}} {1.2pt};
	\pgfusepath{fill}
	\node at (8.2,6.9) {$\varphi^{n}(x_3)$};
\end{scope}

\begin{scope}[style=axes]
	\draw[-] (11,0) node[below] {$(0,0)$} -- (11,10) node[above] {$(0,1)$};
	\draw[-] (11,10) -- (21,10);
	\draw[-] (21,0) node[below] {$(1,0)$} -- (21,10) node[above] {$(1,1)$};
	\draw[-] (11,0) -- (21,0);
\end{scope}
\begin{scope}[style=axes]
	\draw[-|] (18.41,10) -- (16,8.51);
	\draw[-|] (18.09,10) -- (15.91,8.65);
	\draw[-] (18.41,0) -- (21,1.6);
	\draw[-] (18.09,0) -- (21,1.8);
	\draw[-] (11,1.6) -- (21,7.78);
	\draw[-] (11,1.8) -- (21,7.98);
	\draw[-] (11,7.78) -- (14.59,10);
	\draw[-] (11,7.98) -- (14.27,10);
	\draw[-] (14.59,0) -- (21,3.96);
	\draw[-] (14.27,0) -- (21,4.16);
	\draw[-|] (11,4.16) -- (12.41,5.03);
	\draw[-|] (11,3.96) -- (12.5,4.89);
	\node at (14.9,9.0) {$\sor(\alpha_s^n(a))$};
	\node at (15.5,8.1) {$\range(\alpha_s^n(a))$};
\end{scope}[style=axes]
\begin{scope}[style=axes]
	\draw[|-] (17,2.5) -- (12.36,10);
	\draw[|-] (17.1789,2.61) -- (12.6,10);
	\draw[-] (12.36,0) -- (11,2.2);
	\draw[-] (12.6,0) -- (11,2.6);
	\draw[-] (21,2.2) -- (16.18,10);
	\draw[-] (21,2.6) -- (16.42,10);
	\draw[-|] (16.18,0) -- (15,1.89);
	\draw[-|] (16.42,0) -- (15.18,2);
	\node at (13.5,2.0) {$\sor(\alpha_u^{-n}(b))$};
	\node at (15.6,2.5) {$\range(\alpha_u^{-n}(b))$};
\end{scope}[style=axes]
\begin{scope}[style=axes]
	\pgfpathcircle{\pgfpoint{18.42cm}{6.38cm}} {1.5pt};
	\pgfusepath{fill}
	\node at (18.0,6.5) {$x_1$};
	\pgfpathcircle{\pgfpoint{18.6cm}{6.49cm}} {1.5pt};
	\pgfusepath{fill}
	\node at (18.7,6.9) {$x_4$};
	\pgfpathcircle{\pgfpoint{18.69cm}{6.35cm}} {1.5pt};
	\pgfusepath{fill}
	\node at (19.1,6.3) {$x_3$};
	\pgfpathcircle{\pgfpoint{18.51cm}{6.24cm}} {1.5pt};
	\pgfusepath{fill}
	\node at (18.4,5.85) {$x_2$};
\end{scope}
\end{tikzpicture}}
\caption{The points $x_1,x_2,x_3,x_4$ for a hyperbolic toral automorphism.}
\label{fig:HTA_alpha_support}
\end{center}
\end{figure}

This completes the interactions of $S(X,\varphi,Q)$ and $U(X,\varphi,P)$ on $\h$. Our goal is to produce an extension of $R^S(X,\varphi,Q) \tensor R^U(X,\varphi,P)$. To accomplish this we shall represent each of these $C^\ast$-algebras as operators on a Hilbert space such that they commute modulo compact operators. Consider the Hilbert space
\[ \ho = \h \tensor \ell^2(\mathbb{Z}) = \bigoplus_{n \in \mathbb{Z}} \h. \]
We shall define representations of $R^S(X,\varphi,Q)$ and $R^U(X,\varphi,P)$ as bounded operators on $\ho$ and show that the interaction of these algebras naturally gives rise to an extension of $R^S(X,\varphi,Q) \tensor R^U(X,\varphi,P)$ by the compact operators of $\ho$.

Recall that for $\delta_x$ in $\h$ we have the unitary operator $u \delta_x = \delta_{\varphi(x)}$ and $\alpha_s(a) = uau^\ast$ and $\alpha_u(b) = ubu^\ast$. The bilateral shift on $\ell^2(\mathbb{Z})$ will be denoted by $B$ and is the operator given by $B \delta_n = \delta_{n-1}$. We note that from this point forwards we will always use $\delta_m$ and $\delta_n$ as basis vectors of $\ell^2(\mathbb{Z})$ and $\delta_x$, $\delta_y$ and $\delta_z$ as basis vectors of $\h = \ell^2(X^h(P,Q))$. Finally, let us also define the operator 
\begin{eqnarray} \label{61}
\mathfrak{U} & = & \bigoplus_{n \in \mathbb{Z}} u^n = \left( \begin{array}{ccccc}
\ddots & & & & \\
 & u^{-1} & & & \\
 & & u^0 & &  \\
 & & & u^1 & \\
 & & & & \ddots \\ \end{array} \right) \in \mathcal{B}(\h \tensor \ell^2(\mathbb{Z})). 
 \end{eqnarray}
 
Define $\overline{\pi}_s: R^S(X,\varphi,Q) \rightarrow \mathcal{B}(\h \tensor \ell^2(\mathbb{Z}))$, for $a$ in $S(X,\varphi,Q)$, via
\[ \overline{\pi}_s(a) = \bigoplus_{n \in \mathbb{Z}} \alpha_s^n(a) = \mathfrak{U}(a \tensor 1)\mathfrak{U}^\ast \qquad \qquad \overline{\pi}_s(u)=1 \tensor B. \]
Also define $\overline{\pi}_u: R^U(X,\varphi,P) \rightarrow \mathcal{B}(\h \tensor \ell^2(\mathbb{Z}))$, for $b$ in $U(X,\varphi,P)$, via
\[ \overline{\pi}_u(b) = b \tensor 1 \qquad \qquad \qquad \overline{\pi}_u(u)=u \tensor B^\ast. \]
The reader is invited to check that these are covariant representations of the Ruelle algebras.

We now consider the interactions of $S(X,\varphi,Q)$, $U(X,\varphi,P)$, $R^S(X,\varphi,Q)$, and $R^U(X,\varphi,P)$ as operators on $\ho$.

\begin{lemma} \label{compact_commutator}
For any  $f$  in $R^S(X,\varphi,Q)$ and  $g$  in $R^U(X,\varphi,P)$, we have 
\[ [\overline{\pi}_s(f),\overline{\pi}_u(g)] = \overline{\pi}_s(f)\overline{\pi}_u(g) - \overline{\pi}_u(g)\overline{\pi}_s(f) \]
is a compact operator on $\ho$.
\end{lemma}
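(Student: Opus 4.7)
The plan is to reduce the problem by density and bilinearity to a very concrete generating set and then rewrite the commutator as a block-diagonal operator whose blocks are compact and tend to zero in norm. The algebraic span of $\{a u^m : a \in S(X,\varphi,Q),\, m \in \Z\}$ is dense in $R^S(X,\varphi,Q)$, and likewise the span of $\{b u^k : b \in U(X,\varphi,P),\, k \in \Z\}$ is dense in $R^U(X,\varphi,P)$. Since the bracket $(f,g) \mapsto [\overline{\pi}_s(f), \overline{\pi}_u(g)]$ is bilinear and jointly norm-continuous and the compact operators form a norm-closed ideal, it will suffice to verify the claim when $f = a u^m$ and $g = b u^k$.

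Next I would unpack the representations on $\ho = \bigoplus_{n \in \Z} \h$ componentwise. Using $\overline{\pi}_s(u) = 1 \otimes B$, $\overline{\pi}_s(a) = \bigoplus_n \alpha_s^n(a)$, and $\overline{\pi}_u(u) = u \otimes B^*$, together with the covariance relation $u \alpha_s^j(a) u^{-1} = \alpha_s^{j+1}(a)$, a direct calculation gives, for $\xi = (\xi_n)_{n \in \Z} \in \ho$,
\[ \bigl([\overline{\pi}_s(au^m), \overline{\pi}_u(bu^k)]\,\xi\bigr)_n \,=\, [\alpha_s^n(a),\, b]\, u^k\, \xi_{n+m-k}. \]
Setting $V := u^k \otimes B^{m-k}$, which is unitary on $\ho$, this identity rewrites as
\[ [\overline{\pi}_s(au^m), \overline{\pi}_u(bu^k)] \,=\, T\,V, \qquad T := \bigoplus_{n\in\Z} [\alpha_s^n(a),\, b]. \]
Thus the task will reduce to proving that the block-diagonal operator $T$ is compact on $\ho$.

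Compactness of $T$ rests on three inputs. First, Lemma \ref{lemma:SU_cmpct} applied to $\alpha_s^n(a) \in S(X,\varphi,Q)$ and $b \in U(X,\varphi,P)$ shows that each block $C_n := [\alpha_s^n(a), b]$ is compact on $\h$. Second, Lemma \ref{lemma:asymp_abel} gives $\|C_n\| \to 0$ as $n \to +\infty$. Third, Lemma \ref{SU_eventually_zero}, read with $-n$ in place of $n$, forces both $\alpha_s^n(a) b$ and $b\,\alpha_s^n(a)$ to vanish in norm as $n \to -\infty$, so $\|C_n\| \to 0$ in that direction as well. A standard truncation then finishes the argument: for any $\varepsilon > 0$, choosing $N$ with $\|C_n\| < \varepsilon$ for $|n| > N$ exhibits $T$ as a finite direct sum of compact blocks plus an operator of norm at most $\varepsilon$, so $T$ is a norm limit of compact operators and hence compact.

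The main obstacle, to my mind, is the decay of $\|C_n\|$ as $n \to -\infty$. Asymptotic abelianness in Lemma \ref{lemma:asymp_abel} is only stated in the positive direction, and the needed one-sided vanishing as $n \to -\infty$ is precisely what Lemma \ref{SU_eventually_zero} delivers; this is also the point where the standing hypothesis $P \cap Q = \varnothing$ enters essentially. Once those ingredients are in place, the remainder is bookkeeping combined with a routine block-diagonal compactness argument.
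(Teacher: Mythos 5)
Your proof is correct and follows essentially the same route as the paper: reduce to the case $a\in S(X,\varphi,Q)$, $b\in U(X,\varphi,P)$ and then verify compactness of the resulting block-diagonal operator using Lemma \ref{lemma:SU_cmpct} for compactness of each block, Lemma \ref{lemma:asymp_abel} for decay as $n\to+\infty$, and Lemma \ref{SU_eventually_zero} for decay as $n\to-\infty$. The only cosmetic difference is in the reduction step: you compute $[\overline{\pi}_s(au^m),\overline{\pi}_u(bu^k)]$ explicitly and factor it as a block-diagonal operator times the unitary $u^k\otimes B^{m-k}$, whereas the paper instead notes that the three commutators $[\overline{\pi}_s(a),\overline{\pi}_u(u)]$, $[\overline{\pi}_u(b),\overline{\pi}_s(u)]$, $[\overline{\pi}_u(u),\overline{\pi}_s(u)]$ all vanish and appeals (implicitly) to the derivation property of the commutator together with density.
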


\begin{proof}
From lemma \ref{lemma:SU_cmpct} we know that on each coordinate of $\ho$, for $a$ in $S(X,\varphi,Q)$ and $b$ in $U(X,\varphi,P)$, we have $\overline{\pi}_s(a)\overline{\pi}_u(b)$ and $\overline{\pi}_u(b)\overline{\pi}_s(a)$ are compact operators. Denote the $n^{th}$ coordinate of
\[ \ho = \bigoplus_{n \in \mathbb{Z}} \h \]
by $\ho_n$ and set $\ep >0$. Lemma \ref{SU_eventually_zero} implies that there exists $N_1$ such that for $n \geq N_1$ we have that both $\|\alpha^{-n}(a)b\| < \ep/2$ and $\|b\alpha^{-n}(a)\| < \ep/2$. Therefore,
\begin{eqnarray*}
\|(\overline{\pi}_s(a)\overline{\pi}_u(b) - \overline{\pi}_u(b)\overline{\pi}_s(a))|_{\ho_{-n}} \| & = & \|\alpha^{-n}(a)b - b\alpha^{-n}(a) \| \\
& \leq & \|\alpha^{-n}(a)b\| + \|b\alpha^{-n}(a) \|< \ep.
\end{eqnarray*}
Moreover, Lemma \ref{lemma:asymp_abel} implies that there exists $N_2$ such that for $n \geq N_2$ we have
\[ \|\overline{\pi}_s(a)\overline{\pi}_u(b) - \overline{\pi}_u(b)\overline{\pi}_s(a))|_{\ho_n} \| = \|\alpha^n(a)b - b\alpha^n(a) \| < \ep. \]
Therefore, for $a \in S(X,\varphi,Q)$ and $b \in U(X,\varphi,P)$ we have $[\overline{\pi}_s(a),\overline{\pi}_u(b)]$ is compact. Moreover, computations show that $[\overline{\pi}_s(a), \overline{\pi}_u(u)] = 0$, $[\overline{\pi}_u(b),\overline{\pi}_s(u)]=0$, and $[\overline{\pi}_u(u),\overline{\pi}_s(u)]=0$. The conclusion follows.
\end{proof}

The proof of the next lemma is omitted other than to note that the result follows immediately from the irreducibility of the Smale space itself.

\begin{lemma} \label{lemma:SU_noncmpct}
If $a$ is in $S(X,\varphi,Q)$ and $b$ is in $U(X,\varphi,P)$, then $\overline{\pi}_s(a)\overline{\pi}_u(b)$ and $\overline{\pi}_u(b)\overline{\pi}_s(a)$ are never compact operators on $\ho$ unless either $a$ or $b$ is the zero operator.
\end{lemma}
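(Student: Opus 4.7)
The plan is to exploit the direct sum structure of $\overline{\pi}_s(a)\overline{\pi}_u(b)$ with respect to $\ho = \bigoplus_{n\in\Z}\h$. A direct computation from the formulas for $\overline{\pi}_s$ and $\overline{\pi}_u$ yields
\[
\overline{\pi}_s(a)\overline{\pi}_u(b) \;=\; \bigoplus_{n\in\Z} \alpha_s^n(a)\cdot b.
\]
Lemma \ref{lemma:SU_cmpct} shows that each summand is already compact on $\h$, so the direct sum lies in $\mathcal{K}(\ho)$ if and only if $\|\alpha_s^n(a)\cdot b\|\to 0$ as $|n|\to\infty$. Lemma \ref{SU_eventually_zero} supplies decay as $n\to -\infty$, so the whole content of the lemma is that $\|\alpha_s^n(a)\cdot b\|$ must not tend to zero as $n\to +\infty$ when $a\ne 0$ and $b\ne 0$.

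First I would reduce to the case where $a$ and $b$ are supported on single basic open sets $V^s(v,w,h^s,\delta)$ and $V^u(v',w',h^u,\delta')$ respectively, carrying uniform lower bounds $a\ge c>0$ and $b\ge c>0$ on a small subset of each support. For such $a,b$, Lemma \ref{lemma:bracket} forces $\varphi^n(X^u(w,\delta))\cap X^s(v',\delta')$ to contain at most one point, so each $\alpha_s^n(a)\cdot b$ has rank at most one, and its operator norm is either zero or at least $c^2$. Consequently the problem reduces to exhibiting infinitely many $n>0$ for which $\alpha_s^n(a)\cdot b\ne 0$, equivalently for which the expanding unstable image $\varphi^n(X^u(w,\delta))$ meets $X^s(v',\delta')$ in a point of $X^h(P,Q)$.

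At this step the irreducibility hypothesis enters. Since $w\in X^u(Q)$, the class $X^u(w)$ equals $X^u(q)$ for the periodic $q\in Q$ with $w\sim_u q$, and this class is dense in $X$ by irreducibility of $(X,\varphi)$. Applying the local product structure near $v'$ and the bracket map to a point of $X^u(w)$ sufficiently close to $v'$ produces a fixed $y_0\in X^u(w)\cap X^s(v',\delta'')\cap X^h(P,Q)$. The desired conclusion becomes the statement that $\varphi^{-n}(y_0)\in X^u(w,\delta)$ for infinitely many $n$. The cleanest route is to first approximate $a$ in norm (using density of periodic points in irreducible Smale spaces) by an element whose basic-set center $w$ is a periodic point in $X^u(q)$; then for $n=mk$ with $k$ a period of $w$ one has $\varphi^{-n}(y_0)\to w$ as $m\to\infty$ by definition of $y_0\in X^u(w)$, so $\varphi^{-n}(y_0)\in X^u(w,\delta)$ for all large $m$, producing the infinite family.

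The main obstacle will be securing the intersection at infinitely many $n$ in the general case where $w$ is not periodic; here I expect to invoke topological mixing of $\varphi$ on $X^u(q)$ (a further consequence of irreducibility) to show that the expanding sets $\varphi^n(X^u(w,\delta))$ eventually exhaust arbitrarily large compact subsets of $X^u(q)$ and therefore meet $X^s(v',\delta'')$ for infinitely many $n$. Once these matrix coefficients of magnitude $\ge c^2$ are located for an infinite set of $n$, the norm $\|\alpha_s^n(a)\cdot b\|$ cannot tend to zero, and $\overline{\pi}_s(a)\overline{\pi}_u(b)$ fails to be compact. The corresponding statement for $\overline{\pi}_u(b)\overline{\pi}_s(a)$ follows by taking adjoints, using $(\overline{\pi}_u(b)\overline{\pi}_s(a))^*=\overline{\pi}_s(a^*)\overline{\pi}_u(b^*)$ and the fact that $a^*,b^*$ are nonzero whenever $a,b$ are.
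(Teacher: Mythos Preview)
The paper does not actually prove this lemma: it says only that ``the result follows immediately from the irreducibility of the Smale space itself.'' Your outline is therefore more than the paper provides, and the key input you identify --- irreducibility --- is exactly what the authors point to.

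Your overall strategy is sound, but one step is wrong as stated. You assert that Lemma~\ref{lemma:bracket} forces $\varphi^{n}(X^{u}(w,\delta))\cap X^{s}(v',\delta')$ to contain at most one point, hence that $\alpha_s^{n}(a)\cdot b$ has rank at most one. This is false for large positive $n$: the set $\varphi^{n}(X^{u}(w,\delta))$ is no longer a \emph{local} unstable set of small diameter, so Lemma~\ref{lemma:bracket} does not apply, and in fact in an irreducible Smale space this intersection will typically contain many points once $n$ is large. Fortunately the rank-one claim is not what you need. For the norm lower bound it suffices to exhibit, for infinitely many $n$, a single basis vector $\delta_{x}$ with $\|\alpha_s^{n}(a)\cdot b\,\delta_{x}\|\geq c^{2}$; since $\alpha_s^{n}(a)\cdot b\,\delta_{x}$ is a scalar multiple of a single basis vector, one nonzero matrix coefficient of size $\geq c^{2}$ already does the job. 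Your subsequent argument --- locate $y_{0}\in X^{u}(w)\cap X^{s}(v',\delta'')\cap X^{h}(P,Q)$ via density of $X^{u}(q)$ and then use periodicity (after an approximation of the center $w$) or mixing on the unstable leaf to get $\varphi^{-n}(y_{0})\in X^{u}(w,\delta)$ infinitely often --- then goes through.

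One further point to tighten: the reduction from general nonzero $a,b$ to elements supported on single basic sets with pointwise lower bounds is not automatic, since a general element is only a limit of sums of such functions and cancellations may occur. A clean way around this is to note that if $\overline{\pi}_s(a)\overline{\pi}_u(b)$ is compact then so is $\overline{\pi}_s(a'a)\overline{\pi}_u(bb')=\overline{\pi}_s(a')\,\overline{\pi}_s(a)\overline{\pi}_u(b)\,\overline{\pi}_u(b')$ for any $a'\in S(X,\varphi,Q)$, $b'\in U(X,\varphi,P)$; choosing $a',b'$ appropriately (for instance $a'=a^{*}$, $b'=b^{*}$, then localizing the resulting positive elements) lets you pass to the special form without loss.
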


Define $\mathcal{E}$ to be the $C^\ast$-algebra generated by $\overline{\pi}_s(R^S(X,\varphi,Q))$, $\overline{\pi}_u(R^U(X,\varphi,P))$, and $\mathcal{K}(\ho)$. Note that neither $\overline{\pi}_s(R^S(X,\varphi,Q))$ or $\overline{\pi}_u(R^U(X,\varphi,P))$ contain any compact operators on $\ho$ other than the zero operator. Lemma \ref{compact_commutator} implies that $\overline{\pi}_s(R^S(X,\varphi,Q))$ and $\overline{\pi}_u(R^U(X,\varphi,P))$ commute modulo the compact operators $\mathcal{K}(\ho)$. From this we have that
\[ \mathcal{E} / \mathcal{K}(\ho) \cong R^S(X,\varphi,Q) \tensor R^U(X,\varphi,P). \]
Therefore, we obtain an extension $\Delta$ in $KK^1(R^S(X,\varphi,Q) \tensor R^U(X,\varphi,P),\C)$.

\begin{definition}\label{class_Delta}
The class $\Delta$ in $KK^1(R^S(X,\varphi,Q) \tensor R^U(X,\varphi,P), \cp)$ is represented by the extension
\begin{diagram}
0 & \rTo &\mathcal{K}(\ho) & \rTo & \mathcal{E} & \rTo & R^S(X,\varphi,Q) \tensor R^U(X,\varphi,P) & \rTo & 0. \\
\end{diagram}
\end{definition}

\section{Proof of the main result}

We give a proof of the main result, Theorem \ref{thm:main}. We focus our attention on showing that $\delta \tensor_{R^U(X,\varphi,P)} \Delta =1_{R^S(X,\varphi,Q)}$ and note that an analogous argument shows that $\delta \tensor_{R^S(X,\varphi,Q)} \Delta =-1_{R^U(X,\varphi,P)}$. The proof is divided into roughly 3 parts. In the first we describe the element $\delta \tensor_{R^U(X,\varphi,P)} \Delta$ as an extension. In the second part, we apply a type of untwisting to this extension. Finally, we show that, up to unitary equivalence and Bott periodicity, the class we have obtained is represented by $1_{R^S(X,\varphi,Q)}$.

\begin{lemma} \label{E_prime_extension}
The class of $\delta \tensor_{R^U(X,\varphi,P)} \Delta$ in $KK^1(R^S(X,\varphi,Q) \tensor \mathscr{S},R^S(X,\varphi,Q))$ is given by the extension
\begin{diagram}
0 & \rTo & R^S(X,\varphi,Q) \tensor \mathcal{K}(\ho) & \rTo & \mathcal{E^\prime} & \rTo^{\sigma_* \circ \pi^{\prime} \,} & R^S(X,\varphi,Q) \tensor \mathscr{S} & \rTo & 0. \\
\end{diagram}
\end{lemma}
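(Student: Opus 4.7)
The plan is to use the general principle that the Kasparov product of a class represented by a $*$-homomorphism with one represented by a semisplit extension is realised as a pullback of extensions (see \cite{Bla}). Here $\delta$ is given by the $*$-homomorphism $\delta: \mathscr{S} \to R^S(X,\varphi,Q) \otimes R^U(X,\varphi,P)$ of Definition \ref{class_delta}, while $\Delta$ is the extension class of Definition \ref{class_Delta}. Tracking which tensor factors are contracted, $\delta \otimes_{R^U(X,\varphi,P)} \Delta$ naturally lives in $KK^1(\mathscr{S} \otimes R^S(X,\varphi,Q), R^S(X,\varphi,Q))$, and the $\sigma_*$ appearing in the statement will be the final flip carrying it to the claimed group $KK^1(R^S(X,\varphi,Q) \otimes \mathscr{S}, R^S(X,\varphi,Q))$.

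First I would rewrite $\Delta$, via the flip on its source, as $\sigma^*(\Delta) \in KK^1(R^U(X,\varphi,P) \otimes R^S(X,\varphi,Q), \C)$, and then externally tensor on the left by $1_{R^S(X,\varphi,Q)}$. This produces the extension
$$0 \to R^S(X,\varphi,Q) \otimes \mathcal{K}(\ho) \to R^S(X,\varphi,Q) \otimes \mathcal{E} \to R^S(X,\varphi,Q) \otimes R^U(X,\varphi,P) \otimes R^S(X,\varphi,Q) \to 0,$$
with the flip absorbed into the quotient map; this represents $1_{R^S} \otimes \sigma^*(\Delta)$ in $KK^1$. Next, I would form the $*$-homomorphism $\delta \otimes 1_{R^S(X,\varphi,Q)}: \mathscr{S} \otimes R^S(X,\varphi,Q) \to R^S(X,\varphi,Q) \otimes R^U(X,\varphi,P) \otimes R^S(X,\varphi,Q)$ and pull back the above extension along it. The resulting pullback has total algebra
$$\mathcal{E}' = \{ (e,x) \mid \bar\pi(e) = (\delta \otimes 1)(x) \}$$
sitting inside $(R^S \otimes \mathcal{E}) \oplus (\mathscr{S} \otimes R^S)$, with kernel $R^S(X,\varphi,Q) \otimes \mathcal{K}(\ho)$, quotient $\mathscr{S} \otimes R^S(X,\varphi,Q)$, and quotient map $\pi'$ the second coordinate projection. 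By the standard pullback recipe for the Kasparov product of a $*$-homomorphism with an extension, this represents $(\delta \otimes 1_{R^S}) \cdot (1_{R^S} \otimes \sigma^*(\Delta)) = \delta \otimes_{R^U(X,\varphi,P)} \Delta$.

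To match the statement verbatim, I would then apply the tensor flip $\sigma_*: \mathscr{S} \otimes R^S(X,\varphi,Q) \to R^S(X,\varphi,Q) \otimes \mathscr{S}$ on the quotient side, so that the total quotient map becomes $\sigma_* \circ \pi'$, producing precisely the extension displayed in the lemma. The main obstacle will be entirely bookkeeping: one must verify carefully that pulling back the external product of $\Delta$ with $1_{R^S}$ along $\delta \otimes 1_{R^S}$ really implements the internal Kasparov product over $R^U$ alone (and not, inadvertently, over any $R^S$ factor), and that the several tensor-factor flips required to align the $R^U$ slots collapse correctly into the single $\sigma_*$ advertised in the statement. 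No new analytic input is needed; the argument is a formal identification within the calculus of Kasparov products.
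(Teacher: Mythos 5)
Your proposal is correct and follows essentially the same route as the paper: both realize $\delta\otimes_{R^U}\Delta$ by first tensoring $\sigma_*(\Delta)$ externally with $1_{R^S}$ to get an extension with quotient $R^S\otimes R^U\otimes R^S$, and then pulling back along (equivalently, composing the Busby-type data with) the $*$-homomorphism $\delta\otimes 1_{R^S}$, with the final flip $\sigma_*$ aligning the tensor factors. The only cosmetic difference is that the paper describes $\mathcal{E}'$ concretely as the $C^*$-algebra generated by $R^S\otimes\mathcal{K}(\ho)$ and the image of $(1\otimes\overline{\pi}_u\otimes\overline{\pi}_s)\circ(\delta\otimes 1)\circ\sigma_*$ in $\mathcal{B}(\h\otimes\h\otimes\ell^2(\Z))$ (a presentation it needs for the later untwisting arguments), whereas you present $\mathcal{E}'$ as the abstract pullback; these define the same class.
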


\begin{proof}
Recall that the expanded product is
\[ \delta \tensor_{R^U(X,\varphi,P)} \Delta = \sigma_*(\tau_{\mathscr{S}}\tau_{R^S(X,\varphi,Q)}(\delta) \tensor \tau^{R^S(X,\varphi,Q)}\sigma_*(\Delta)). \]
The Kasparov product is obtained by composing the $\ast$-homomorphism $\tau_{\mathscr{S}}\tau_{R^S(X,\varphi,Q)}(\delta)$, which is given by
\[ \delta \tensor 1: \mathscr{S} \tensor R^S(X,\varphi,Q) \longrightarrow R^S(X,\varphi,Q) \tensor R^U(X,\varphi,P) \tensor R^S(X,\varphi,Q), \]
and the representation $1 \tensor \overline{\pi}_u \tensor \overline{\pi}_s : R^S(X,\varphi,Q) \tensor R^U(X,\varphi,P) \tensor R^S(X,\varphi,Q) \rightarrow \mathcal{B}(\h \tensor \h \tensor \ell^2(\mathbb{Z}))$ defining the extension $\tau^{R^S(X,\varphi,Q)}\sigma_*(\Delta)$, given by
\[ 0 \rightarrow R^S(X,\varphi,Q) \tensor \mathcal{K}(\ho) \rightarrow R^S(X,\varphi,Q) \tensor \mathcal{E} \rightarrow R^S(X,\varphi,Q) \tensor R^U(X,\varphi,P) \tensor R^S(X,\varphi,Q) \rightarrow 0. \]
Whence, the class $\delta \tensor_{R^U(X,\varphi,P)} \Delta$ is given by the extension
\begin{diagram}
0 & \rTo & R^S(X,\varphi,Q) \tensor \mathcal{K}(\ho) & \rTo & \mathcal{E^\prime} & \rTo & R^S(X,\varphi,Q) \tensor \mathscr{S} & \rTo & 0 \\
\end{diagram}
where $\mathcal{E}^\prime$ is the $C^\ast$-algebra generated by $R^S(X,\varphi,Q) \tensor \mathcal{K}(\ho)$ and the image of the map 
\[ (1 \tensor \overline{\pi}_u \tensor \overline{\pi}_s) \circ  (\delta \tensor 1) \circ \sigma_*:  R^S(X,\varphi,Q) \tensor \mathscr{S} \rightarrow \mathcal{B}(\h \tensor \h \tensor \ell^2(\mathbb{Z})). \]
\end{proof}

We note that the image of $a \cdot u^k \tensor z-1$ completely determines the map
\begin{eqnarray*}
(1 \tensor \overline{\pi}_u \tensor \overline{\pi}_s) \circ  (\delta \tensor 1) \circ \sigma_* & : &  R^S(X,\varphi,Q) \tensor \mathscr{S} \rightarrow \mathcal{B}(\h \tensor \h \tensor \ell^2(\mathbb{Z})).
\end{eqnarray*}
Recall that both $p_G$ and $v$ are elements of $R^S(X,\varphi,Q) \tensor R^U(X,\varphi,P)$ (see Lemma \ref{lemma:p_in_S_U} on page \pageref{lemma:p_in_S_U}). Moreover, let us extend the element $\mathfrak{U}$, defined by (\ref{61}) on page \pageref{61}, to $\h \tensor \h \tensor \ell^2(\mathbb{Z})$ via
\begin{eqnarray*}
\mathfrak{U} & = & \bigoplus_{n \in \mathbb{Z}} u^n \tensor u^n = \left( \begin{array}{ccccc}
\ddots & & & & \\
 & u^{-1} \tensor u^{-1} & & & \\
 & & u^0 \tensor u^0 & &  \\
 & & & u^1\tensor u^1 & \\
 & & & & \ddots \\ \end{array} \right) \in \mathcal{B}(\h \tensor \h \tensor \ell^2(\mathbb{Z})). 
 \end{eqnarray*}
 Now we describe the map $(1 \tensor \overline{\pi}_u \tensor \overline{\pi}_s) \circ  (\delta \tensor 1) \circ \sigma_*:R^S(X,\varphi,Q) \tensor \mathscr{S} \rightarrow \mathcal{B}(\h \tensor \h \tensor \ell^2(\mathbb{Z}))$ on generators:
\begin{eqnarray*}
1 \tensor z & \mapsto & ((u \tensor u)p_G v^\ast) \tensor 1, \\
a \tensor 1 & \mapsto & [((u \tensor u)p_G (u \tensor u)^\ast) \tensor 1)][\mathfrak{U}(1 \tensor a \tensor 1)\mathfrak{U}^\ast], \\
u \tensor 1 & \mapsto & ((u \tensor u)p_G (u \tensor u)^\ast) \tensor B.
\end{eqnarray*}

Now that we have computed the product $\delta \tensor_{R^U(X,\varphi,P)} \Delta$ and established the notation in the sequel, we begin the untwisting step. In particular, we shall define an automorphism $\varTheta: R^S(X,\varphi,Q) \tensor \mathscr{S} \rightarrow R^S(X,\varphi,Q) \tensor \mathscr{S}$ which is homotopic to $1_{R^S(X,\varphi,Q) \tensor \mathscr{S}}$ in $KK(R^S(X,\varphi,Q) \tensor \mathscr{S},R^S(X,\varphi,Q) \tensor \mathscr{S})$ and therefore taking the intersection product of $\delta \tensor_{R^U(X,\varphi,P)} \Delta$ with $\varTheta$ does not change the class. Indeed, for $a \in S(X,\varphi,Q)$, $u$ implementing the action $\alpha_s$, and $f(t) \in \mathscr{S} = C_0(0,1)$, define
\[ \varTheta(a \cdot u^k \tensor f(t)) = a \cdot u^k \tensor e^{2\pi i kt}f(t). \]
We note that this map is given on generators by
\[ 1 \tensor z \mapsto 1 \tensor z \qquad , \qquad a \tensor 1 \mapsto a \tensor 1 \qquad , \qquad u \tensor 1 \mapsto u \tensor z. \]
At this point we need to accomplish two things. First, we must show that $\varTheta$ is homotopic to $1_{R^S(X,\varphi,Q) \tensor \mathscr{S}}$ and second, we must compute the product
\[ \varTheta \tensor_{(R^S(X,\varphi,Q)) \tensor \mathscr{S}} (\delta \tensor_{R^U(X,\varphi,P)} \Delta). \]

\begin{lemma}\label{homotopic_1}
There exists an automorphism $\varTheta: R^S(X,\varphi,Q) \tensor \mathscr{S} \rightarrow R^S(X,\varphi,Q) \tensor \mathscr{S}$ given by
\[ \varTheta(a \cdot u^k \tensor f(t)) = a \cdot u^k \tensor e^{2\pi i kt}f(t). \]
Moreover, $\varTheta$ is the identity in $KK(R^S(X,\varphi,Q) \tensor \mathscr{S}, R^S(X,\varphi,Q) \tensor \mathscr{S})$
\end{lemma}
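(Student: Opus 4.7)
The plan has two parts: first, verify that $\varTheta$ is a well-defined automorphism of $R^S(X,\varphi,Q) \tensor \mathscr{S}$; second, construct an explicit norm-continuous path of automorphisms from the identity to $\varTheta$, which will yield the KK-homotopy. For the first part, I would use the universal property of the crossed product. Let $z := e^{2\pi i t}$, a unitary in the unitization $\widetilde{\mathscr{S}} \cong C(S^1)$ of $\mathscr{S} = C_0(0,1)$; then $U := u \tensor z$ is a unitary in the multiplier algebra $M(R^S(X,\varphi,Q) \tensor \mathscr{S})$. Together with the inclusion $\iota: S(X,\varphi,Q) \tensor \mathscr{S} \hookrightarrow R^S(X,\varphi,Q) \tensor \mathscr{S}$, the pair $(\iota, U)$ forms a covariant representation of the dynamical system $(S(X,\varphi,Q) \tensor \mathscr{S}, \alpha_s \tensor \mathrm{id}, \Z)$, since $U \iota(a \tensor f) U^* = (u \tensor z)(a \tensor f)(u^* \tensor \bar z) = \alpha_s(a) \tensor f$. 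By the universal property applied to the identification $R^S(X,\varphi,Q) \tensor \mathscr{S} \cong (S(X,\varphi,Q) \tensor \mathscr{S}) \rtimes_{\alpha_s \tensor \mathrm{id}} \Z$, this pair extends uniquely to a $*$-homomorphism $\varTheta$ with the prescribed formula; the same construction with $\bar z$ in place of $z$ gives an explicit inverse, so $\varTheta$ is an automorphism.

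For the second part, interpolate by scaling the exponent: for $s \in [0,1]$, set $z_s := e^{2\pi i s t} \in \widetilde{\mathscr{S}}$, which is a unitary. Exactly the argument above, with $z$ replaced by $z_s$, produces an automorphism $\varTheta_s$ satisfying $\varTheta_s(a \cdot u^k \tensor f(t)) = a \cdot u^k \tensor e^{2\pi i s k t} f(t)$, with $\varTheta_0 = \mathrm{id}$ and $\varTheta_1 = \varTheta$. I would then package the family into a single $*$-homomorphism $\Phi: R^S(X,\varphi,Q) \tensor \mathscr{S} \to C([0,1], R^S(X,\varphi,Q) \tensor \mathscr{S})$ by setting $\Phi(x)(s) := \varTheta_s(x)$; evaluation at the endpoints recovers $\mathrm{id}$ and $\varTheta$, so $\Phi$ exhibits a homotopy of $*$-homomorphisms and thus $[\varTheta] = [\mathrm{id}]$ in $KK(R^S(X,\varphi,Q) \tensor \mathscr{S}, R^S(X,\varphi,Q) \tensor \mathscr{S})$.

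The one real verification is that $s \mapsto \varTheta_s(x)$ is norm-continuous for every $x$, so that $\Phi$ genuinely lands in $C([0,1], R^S(X,\varphi,Q) \tensor \mathscr{S})$. On the dense $*$-subalgebra spanned by finite sums of generators $a \cdot u^k \tensor f$, continuity is immediate since $s \mapsto e^{2\pi i s k t} f(t)$ is norm-continuous in $\mathscr{S}$ for each fixed $k$; the extension to arbitrary $x$ is a standard three-epsilon argument using that each $\varTheta_s$ is an isometry. I expect the main obstacle to be purely bookkeeping---namely, confirming carefully that the unitaries $u \tensor z_s$ actually live in the multiplier algebra $M(R^S(X,\varphi,Q) \tensor \mathscr{S})$ and that the non-unital factor $\mathscr{S}$ does not interfere with the universal property of the crossed product---once this is pinned down, no dynamical input is required and the rest is formal.
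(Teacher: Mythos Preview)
Your proposal is correct and follows essentially the same approach as the paper: both construct the homotopy $\varTheta_s(a\cdot u^k \tensor f(t)) = a\cdot u^k \tensor e^{2\pi i skt}f(t)$ by scaling the exponent, verify covariance so that each $\varTheta_s$ is a well-defined automorphism, and check norm-continuity in $s$ on generators. Your framing via the universal property of the crossed product and the multiplier algebra, together with the packaging of the family as a single map into $C([0,1],R^S(X,\varphi,Q)\tensor\mathscr{S})$, is a slightly more polished version of the paper's direct covariance computation and pointwise continuity estimate, but the substance is identical.
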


\begin{proof}
We will show a homotopy from $\varTheta$ to the identity. Indeed, for $r \in [0,1]$ define,
\[ \varTheta_r(a \cdot u^k \tensor f(t)) = a \cdot u^k \tensor e^{2\pi i krt}f(t). \]
We want to show that $\varTheta_r$ is a $\ast$-automorphism for every $r$ in $[0,1]$ and $t$ in $(0,1)$. To see that covariance is maintained, for all $r$ in $[0,1]$ and $t$ in $(0,1)$, we compute
\begin{eqnarray*}
\varTheta_r(u \tensor 1) \varTheta_r(a \tensor f(t)) \varTheta_r(u \tensor 1)^\ast & = & (u \tensor e^{2 \pi i rt}) (a \tensor f(t)) (u^\ast \tensor e^{-2 \pi i rt}) \\
& = & u a u^\ast \tensor f(t) = \alpha_s(a) \tensor f(t) = \varTheta_r(\alpha_s(a) \tensor f(t)).
\end{eqnarray*}
Therefore, the map $\varTheta_r$ satisfies the covariance conditions for all $r \in [0,1]$ and $t$ in $(0,1)$, so extends to a $\ast$-homomorphism on $R^S(X,\varphi,Q) \tensor \mathscr{S}$. We can explicitly write a formula for the inverse of $\varTheta_r$ on generators so that $\varTheta_r$ is actually a $\ast$-automorphism. Moreover, $\varTheta_r$ is clearly is faithful since $R^S(X,\varphi,Q) \tensor \mathscr{S}$ is simple and $\varTheta$ is clearly onto. Now we must show that each map
\[ a \cdot u^k \tensor f(t) \mapsto \varTheta_r(a \cdot u^k \tensor f(t)) =  a \cdot u^k \tensor e^{2\pi i k t r}f(t)\]
is continuous. Let $\ep >0$, and set $\delta = \frac{\ep}{k M}$ where $M$ is the maximum value of $\|a \cdot u^k \tensor f(t)\|$ for $t \in (0,1)$. For $|r - r^\prime|<\delta$ we compute
\begin{eqnarray*}
\| \varTheta_r(a \cdot u^k \tensor f(t)) - \varTheta_{r^\prime}(a \cdot u^k \tensor f(t)) \| & = & \| a \cdot u^k \tensor e^{2\pi i k t r} f(t) - a \cdot u^k \tensor e^{2\pi i k t r^\prime} f(t) \| \\
& = & \| a \cdot u^k \tensor (1 - e^{2\pi i k t (r^\prime - r)}) f(t) \| \\
& = & |1 - e^{2\pi i k t (r^\prime - r)}|\| a \cdot u^k \tensor f(t) \| \\
& < & \frac{\ep}{M}\| a \cdot u^k \tensor f(t) \| \leq \ep.
\end{eqnarray*}
Finally,
\begin{eqnarray*}
\varTheta_0(a \cdot u^k \tensor f(t)) & = & a \cdot u^k \tensor e^{2\pi i k 0t}f(t)= a \cdot u^k \tensor f(t) \\
\varTheta_1(a \cdot u^k \tensor f(t)) & = & a \cdot u^k \tensor e^{2\pi i k 1t}f(t) = \varTheta(a \cdot u^k \tensor f(t))
\end{eqnarray*}
so that $\varTheta_0 = \textrm{Id}$ and $\varTheta_1 = \varTheta$. Therefore, $\varTheta$ is homotopic to $1_{R^S(X,\varphi,Q) \tensor \mathscr{S}}$ in $KK(R^S(X,\varphi,Q) \tensor \mathscr{S},R^S(X,\varphi,Q) \tensor \mathscr{S})$.
\end{proof}

\begin{lemma}\label{E_prime_prime_extension}
The class of $\varTheta \tensor_{R^S(X,\varphi,Q) \tensor \mathscr{S}} (\delta \tensor_{R^U(X,\varphi,P)} \Delta)$ in \\ $KK^1(R^S(X,\varphi,Q) \tensor \mathscr{S},R^S(X,\varphi,Q))$ is represented by the extension
\begin{diagram}
0 & \rTo & R^S(X,\varphi,Q) \tensor \mathcal{K}(\ho) & \rTo & \mathcal{E^{\prime \prime}} & \rTo & R^S(X,\varphi,Q) \tensor \mathscr{S} & \rTo & 0. \\
\end{diagram}
Furthermore, this extension represents the same class in $KK$-theory as $\delta \tensor_{R^U(X,\varphi,P)} \Delta$.
\end{lemma}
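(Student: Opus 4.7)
The plan is to use the compatibility of the Kasparov product with extensions: when one takes the product of the KK-class of a $*$-homomorphism $\varTheta: A \to A$ with a class in $KK^1(A,B)$ represented by an extension $0 \to B \tensor \K \to \mathcal{E}' \to A \to 0$ (with Busby invariant $\beta': A \to \mathcal{Q}(B \tensor \K)$), the resulting class is represented by the pulled-back extension with Busby invariant $\beta' \circ \varTheta$. Since $\varTheta$ is a $*$-automorphism by Lemma \ref{homotopic_1}, this pulled-back extension can be realized with the same ambient algebra and the same ideal $R^S(X,\varphi,Q) \tensor \mathcal{K}(\ho)$; only the surjection onto the quotient is twisted.

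First I would recall from Lemma \ref{E_prime_extension} the explicit description of $\mathcal{E}'$ as the $C^\ast$-algebra inside $\mathcal{B}(\h \tensor \h \tensor \ell^2(\Z))$ generated by $R^S(X,\varphi,Q) \tensor \mathcal{K}(\ho)$ together with the images of the three generators $a \tensor 1$, $u \tensor 1$, $1 \tensor z$ of $R^S(X,\varphi,Q) \tensor \mathscr{S}$ under the representation $(1 \tensor \overline{\pi}_u \tensor \overline{\pi}_s) \circ (\delta \tensor 1) \circ \sigma_*$. I would then define $\mathcal{E}''$ via the same three image formulas, but evaluated after applying $\varTheta$: since $\varTheta(a \tensor 1) = a \tensor 1$ and $\varTheta(1 \tensor z) = 1 \tensor z$ while $\varTheta(u \tensor 1) = u \tensor z$, the images of the first and third generators are unchanged, and the image of $u \tensor 1$ is replaced by the product (inside $\mathcal{E}'$) of the image of $u \tensor 1$ with the image of $1 \tensor z$. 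This shows $\mathcal{E}''$ is well defined as a subalgebra of the same ambient $\mathcal{B}(\h \tensor \h \tensor \ell^2(\Z))$ and in fact equals $\mathcal{E}'$ as a set; what differs is the quotient map, which for $\mathcal{E}''$ is $\sigma_* \circ \pi' \circ \varTheta^{-1}$, producing precisely the extension displayed in the statement.

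The equality of KK-classes in the second sentence is then immediate from Lemma \ref{homotopic_1}: since $[\varTheta] = 1_{R^S(X,\varphi,Q) \tensor \mathscr{S}}$ in $KK(R^S(X,\varphi,Q) \tensor \mathscr{S}, R^S(X,\varphi,Q) \tensor \mathscr{S})$, associativity and unitality of the Kasparov product give
\[ [\mathcal{E}''] = [\varTheta] \tensor_{R^S(X,\varphi,Q) \tensor \mathscr{S}} \bigl(\delta \tensor_{R^U(X,\varphi,P)} \Delta\bigr) = \delta \tensor_{R^U(X,\varphi,P)} \Delta. \]
No substantive obstacle is expected. The only careful point is to verify, at the level of Busby triples, that the Kasparov product of a $*$-homomorphism class with an extension class is realized by pre-composition of the Busby invariant; this is standard (cf.\ \cite{Bla}), but it must be invoked explicitly so that the explicit formulas for the generators of $\mathcal{E}''$ can be read off as above and used in the final step of the main theorem.
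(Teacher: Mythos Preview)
Your proposal is correct and matches the paper's approach: the paper itself says only that ``the proof is analogous to the proof of Lemma~\ref{E_prime_extension}'' and then records the generator formulas for $\mathcal{E}''$ obtained by precomposing with $\varTheta$, which is exactly the pullback-of-Busby-invariant argument you outline. Your additional observation that $\mathcal{E}'' = \mathcal{E}'$ as subalgebras (only the quotient map changes) and your derivation of the second sentence from Lemma~\ref{homotopic_1} are both correct and in the spirit of the paper's treatment.
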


The proof is analogous to the proof of Lemma \ref{E_prime_extension}. However, we note that $\mathcal{E}^{\prime\prime}$ is the $C^\ast$-algebra generated by $R^S(X,\varphi,Q) \tensor \mathcal{K}(\ho)$ and the image of the map 
\[ (1 \tensor \overline{\pi}_u \tensor \overline{\pi}_s) \circ  (\delta \tensor 1) \circ \sigma_* \circ \varTheta:  R^S(X,\varphi,Q) \tensor \mathscr{S} \rightarrow \mathcal{B}(\h \tensor \h \tensor \ell^2(\mathbb{Z})). \]
Furthermore, the image of $a \cdot u^k \tensor z-1$ completely determines the above map, which is described on generators by
\begin{eqnarray}
\label{74}
1 \tensor z & \mapsto & ((u \tensor u)p_G v^\ast) \tensor 1, \\
\label{75}
a \tensor 1 & \mapsto & [((u \tensor u)p_G (u \tensor u)^\ast) \tensor 1)][\mathfrak{U}(1 \tensor a \tensor 1)\mathfrak{U}^\ast], \\
\label{76}
u \tensor 1 & \mapsto & ((u \tensor u)p_G v^\ast) \tensor 1.
\end{eqnarray}

To complete the proof we must show that the class
\[ \varTheta \tensor_{R^S(X,\varphi,Q) \tensor \mathscr{S}} (\delta \tensor_{R^U(X,\varphi,P)} \Delta) \]
in $KK^1(R^S(X,\varphi,Q) \tensor \mathscr{S},R^S(X,\varphi,Q))$ is equivalent to $\tau^{R^S(X,\varphi,Q)}(\mathcal{T}_0)$. Once we have accomplished this then Bott periodicity implies the result, see Section \ref{sec:dual}.

We begin with a technical construction to produce a unitary operator. Suppose that $(\mathcal{F},G)$ is an $\epsilon^\prime_{X}$-partition of $X$, as in section \ref{class_delta}. We define a vector, which we denote $\chi_{G}$ in $\h=\ell^2(X^h(P,Q))$ which takes the value $(\# G)^{-1/2}$ on the set $G$ and zero elsewhere. Note that $\chi_{G}$ is a unit vector. We let $q_{G}$ denote the rank one projection onto the span of $\chi_{G}$.

We define $W_G$ by setting, for all $y$ in $X^h(P,Q)$,
\[ W_G (\delta_{y} \tensor \chi_{G}) = \sum_{k} f_{k}(y) \delta_{[y, g_{k}]} \tensor \delta_{[g_{k}, y]}, \]
where the sum is taken over all $k$ such that $y$ is in $B(g_{k}, \ep'_{X}/2)$. Recall that in an $\ep'_{X}$-partition of $X$ the support of $f_{k}$ is contained in $B(g_{k}, \ep'_{X}/2)$. Using our standard convention (the bracket returns zero if it is not defined), we will simply write the sum above as being over all $k=1, 2, \ldots, K$ since $f_{k}(y)$ will be zero if $[y, g_{k}]$ and $[g_{k}, y]$ fail to be defined. We also set $W_G (\delta_{z} \tensor \xi) = 0$, 
for $ \xi$ in $\h$ orthogonal to $\chi_{G}$. 

It is easy to verify that 
\[ W_G^{*} (\delta_{y} \tensor \delta_{z}) 
= \left\{ \begin{array}{ll} 
f_{k}([y, z]) \delta_{[y,z]} \tensor \chi_{G} & \text{ if } y \in X^u(g_{k}, \ep), z \in X^s(g_{k}, \ep)  \\ 
0  &  \text{ otherwise } \end{array} \right. \]
for all $w, z$ in $X^h(P,Q)$. The following lemma summarizes the basic properties of $W_G$.

\begin{lemma}\label{lemma:W}
Suppose that $(\mathcal{F},G)$ is an $\epsilon^\prime_{X}$-partition of $X$ and $W_G$ is defined as above. Then
\begin{enumerate}
\item $W_G^\ast W_G = 1 \tensor q_{G}$.
\item $W_G W_G^\ast = p_G$.
\item If $(\mathcal{F} \circ \varphi^{-1}, \varphi(G))$ is also an $\ep'_{X}$-partition, then 
\[ (u \tensor u)W_G(u \tensor u)^{*} = W_{\varphi(G)}. \]
\item $W_G (S(X,\varphi,Q) \tensor \mathcal{K}) \subset S(X,\varphi,Q) \tensor \mathcal{K}$.
\end{enumerate}
\end{lemma}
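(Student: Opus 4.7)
The plan is to verify each of the four items by direct computation on basis vectors, leveraging the bracket axioms, the partition-of-unity condition $\sum_k f_k^2 \equiv 1$, the $\varphi$-equivariance $\varphi[x,y]=[\varphi(x),\varphi(y)]$, and (for (2)) the formula for $p_G$ from Lemma \ref{lemma:p_op}.

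For (1), apply $W_G$ to $\delta_y \tensor \chi_G$ to get $\sum_k f_k(y)\delta_{[y,g_k]} \tensor \delta_{[g_k,y]}$, and then apply $W_G^\ast$ termwise using the absorption identity $[[y,g_k],[g_k,y]] = [y,[g_k,y]] = [y,y] = y$; this returns $f_k(y)\delta_y \tensor \chi_G$, and summing with $\sum_k f_k(y)^2 = 1$ yields $\delta_y \tensor \chi_G$. Combined with the fact that $W_G$ annihilates $\h \tensor \chi_G^\perp$, this gives $W_G^\ast W_G = 1 \tensor q_G$. For (2), a parallel calculation on $\delta_w \tensor \delta_z$ reduces $W_G W_G^\ast(\delta_w \tensor \delta_z)$ to $f_k([w,z])\sum_i f_i([w,z])\delta_{[w,g_i]} \tensor \delta_{[g_i,z]}$ via the identities $[[w,z],g_i]=[w,g_i]$ and $[g_i,[w,z]]=[g_i,z]$, matching Lemma \ref{lemma:p_op} exactly. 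For (3), using $u\chi_G = \chi_{\varphi(G)}$ (immediate from $u\delta_x = \delta_{\varphi(x)}$) and $\varphi$-equivariance of the bracket, each summand of $(u\tensor u)W_G(u\tensor u)^\ast$ applied to $\delta_y \tensor \chi_{\varphi(G)}$ becomes $(f_k\circ\varphi^{-1})(y)\delta_{[y,\varphi(g_k)]} \tensor \delta_{[\varphi(g_k),y]}$, which is precisely $W_{\varphi(G)}(\delta_y \tensor \chi_{\varphi(G)})$ relative to the partition $(\mathcal{F}\circ\varphi^{-1},\varphi(G))$.

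Step (4) is the main obstacle. My strategy is to use $W_G = W_G(1 \tensor q_G)$ to reduce to showing $W_G(a \tensor k) \in S \tensor \mathcal{K}$ for $k$ of the form $|\chi_G\rangle\langle\eta|$, then decompose $W_G = \sum_{l=1}^K W_l$ and approximate $a$ by a function supported on a basic set $V^s(v,w,h^s,\delta)$. Each $W_l(a \tensor k)$ decomposes as a sum of rank-one elementary tensors $f_l(y^*)a(y^*,y)|\delta_{[y^*,g_l]}\rangle\langle\delta_y| \tensor |\delta_{[g_l,y^*]}\rangle\langle\eta|$ with $y^* = h^s(y)$; the first tensor factor lies in $S$ because $[y^*,g_l]\sim_s y^*\sim_s y$ is a stable move, and the second is rank-one compact. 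The delicate point is establishing membership in the norm-closed algebra $S \tensor \mathcal{K}$: I would compute $T^\ast T$ for $T = W_l(a\tensor k)$, observe that off-diagonal terms vanish by injectivity of the local bracket map $y \mapsto ([y^*,g_l],[g_l,y^*])$ near $g_l$, and show that the diagonal equals a continuous compactly supported function on $X^u(Q)$ tensored with $|\eta\rangle\langle\eta|$, placing $T^\ast T$ in $S \tensor \mathcal{K}$. Combined with the identity $T = p_G T (1 \tensor q_G)$, where $p_G \in S \tensor U$ and $1 \tensor q_G$ both multiply $S \tensor \mathcal{K}$ into itself via $U \cdot \mathcal{K} \subset \mathcal{K}$ (Lemma \ref{lemma:SU_cmpct}), a polar decomposition argument should give $T \in S \tensor \mathcal{K}$.
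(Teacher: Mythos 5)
Your treatment of items (1)--(3) is fine and matches the paper, which omits those computations. The difference is in item (4), and there you have both a missed simplification and a genuine gap.

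The missed simplification: after reducing (as you correctly do, and as the paper does) to showing $W_G(a\otimes q_G)\in S\otimes\mathcal K$, and after decomposing over the partition index $l$, the rank-one operator appearing in the second tensor slot, $\lvert\delta_{[g_l,h^s(y)]}\rangle\langle\chi_G\rvert$, is in fact \emph{independent of $y$}: writing $y^*=h^s(y)\in X^u(v,\cdot)$ we have $[v,y^*]=v$, so by the bracket axiom $[g_l,y^*]=[g_l,[v,y^*]]=[g_l,v]$. Thus the $l$-th piece is already an elementary tensor $b_l\otimes e_l$ with $b_l$ a continuous compactly supported kernel on a basic set of $G^s(X,\varphi,Q)$ and $e_l=\lvert\delta_{[g_l,v]}\rangle\langle\chi_G\rvert$, and $W_G(a\otimes q_G)=\sum_l b_l\otimes e_l$ is a \emph{finite} sum of elementary tensors. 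This is exactly the paper's argument, and it finishes the proof cleanly. You noticed that the first slot lies in $S$ because $[y^*,g_l]\sim_s y$, but you did not notice that the second slot is constant, which is the whole point.

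The gap: having missed the simplification, you fall back on ``compute $T^*T$ and invoke polar decomposition.'' This does not work as stated. Knowing $T^*T\in S\otimes\mathcal K$ gives $\lvert T\rvert=(T^*T)^{1/2}\in S\otimes\mathcal K$, but the polar partial isometry $V$ in $T=V\lvert T\rvert$ is just a bounded operator on $\mathcal H\otimes\mathcal H$; there is no a priori reason for it to be a multiplier of $S\otimes\mathcal K$, so you cannot conclude $T\in S\otimes\mathcal K$. (For $A=\mathcal K(\mathcal H)$ the implication $T^*T\in A\Rightarrow T\in A$ is special to the compacts; for a proper subalgebra such as $S\otimes\mathcal K\subset\mathcal B(\mathcal H\otimes\mathcal H)$ it fails in general.) The supplementary observation $T=p_GT(1\otimes q_G)$ does not repair this, since sandwiching by multipliers of $S\otimes\mathcal K$ preserves $S\otimes\mathcal K$ only if you already know the middle factor is there. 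You should replace this whole step with the direct identification $W_G(a\otimes q_G)=\sum_l b_l\otimes e_l$.
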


\begin{proof}
The first three items are the result of direct computations, which we omit. For the fourth item, let $a$ be in $S(X,\varphi,Q)$ and suppose $k$ is any compact operator. Now
\[ W_G(a \tensor k) = W_G(1 \tensor q_{G})(a \tensor k) = W_G (a \tensor q_{G}) (1 \tensor k), \]
and so it suffices to show $W_G(a \tensor q_{G})$ is in $S(X,\varphi,Q) \tensor \mathcal{K}$. The $C^\ast$-algebra $S(X,\varphi,Q)$ has an approximate identity consisting of continuous functions of compact support on $X^u(Q)$. Moreover, such functions are spanned by elements supported on sets of the form $X^u(v,\epsilon'_{X}/2)$. So it suffices to consider a point $v$ in $X^u(Q)$, a function $a$ in $S(X,\varphi,Q)$ supported on a basic set of the form $V^s(v,v,h^s,\epsilon'_{X}/2)$ such that
$a \delta_{y} = a(y,y)\delta_{y}$ if $y$ is in $X^u(v,\epsilon'_{X}/2)$ and zero otherwise, and prove that $W_G(a \tensor q_{G})$ is in $S(X,\varphi,Q) \tensor \mathcal{K}$.

For each $k$, define a function $b_{k}$ supported on a basic set of the form $V^s([v,g_k],v,h^s,\epsilon'_X /2)$ by $b_k(y', y) = a(y,y)f_{k}(y)$ if $d(y, g_{k}) < \epsilon'_{X}$ and $[y, g_{k}] = y'$ and to be zero otherwise. Also define $e_{k}$ to be the rank one operator which maps $\chi_{G}$ to $\delta_{[g_k, v]}$ and is zero on the orthogonal complement of $\chi_{G}$. It follows that $b_k$ is in $S(X,\varphi,Q)$and a computation shows that
\begin{eqnarray*} 
W_G (a \tensor q_{G}) & = & \sum_k b_k \tensor e_{k} \in S(X,\varphi,Q) \tensor \mathcal{K}. 
\end{eqnarray*}
\end{proof}

\begin{lemma} \label{lemma:key}
Let $a$ be in $S(X, \varphi, Q)$ and let $(\mathcal{F}, G)$ be an 
$\ep'_{X}$-partition. Then we have 
\[ \lim_{n \rightarrow \infty} \| (1 \tensor \alpha_s^n(a))W_G - W_G(\alpha_s^n(a)  \tensor 1) \| = 0. \]
\end{lemma}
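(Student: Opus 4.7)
First I would reduce to the case where $a$ is supported on a single basic open set $V^s(v,w,h^s,\delta)$, the general case then following from the fact that $\alpha_s^n$ is isometric, so density of such finite sums in $S(X,\varphi,Q)$ together with a standard $3\epsilon$-argument transfers the estimate. Since $W_G^*W_G = 1\tensor q_G$, both operators in the statement vanish on the orthogonal complement of the image of $1\tensor q_G$, so it suffices to control their action on vectors of the form $\delta_y \tensor \chi_G$. Writing $\tilde a_n(z) := a(h^s\varphi^{-n}(z),\varphi^{-n}(z))$ and $y_n := \varphi^n h^s \varphi^{-n}(y)$, Lemma~\ref{S_basic_function} gives
\[ (1\tensor\alpha_s^n(a))W_G(\delta_y\tensor\chi_G) = \sum_k f_k(y)\,\tilde a_n([g_k,y])\,\delta_{[y,g_k]}\tensor\delta_{\varphi^n h^s\varphi^{-n}([g_k,y])}, \]
\[ W_G(\alpha_s^n(a)\tensor 1)(\delta_y\tensor\chi_G) = \tilde a_n(y)\sum_k f_k(y_n)\,\delta_{[y_n,g_k]}\tensor\delta_{[g_k,y_n]}. \]

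The next step is to identify the basis vectors appearing in the two sums. Iterating the axiom $\varphi[x,y]=[\varphi(x),\varphi(y)]$ one finds $y_n=[y,\varphi^n(v)]$; the axioms $[[x,y],z]=[x,z]$ and $[x,[y,z]]=[x,z]$ then give $[y_n,g_k] = [[y,\varphi^n(v)],g_k]=[y,g_k]$ as well as $\varphi^n h^s\varphi^{-n}([g_k,y]) = [[g_k,y],\varphi^n(v)] = [g_k,\varphi^n(v)] = [g_k,[y,\varphi^n(v)]] = [g_k,y_n]$. Hence both sums run over the same basis vectors $\delta_{[y,g_k]}\tensor\delta_{[g_k,y_n]}$, and the difference reduces to
\[ T(\delta_y\tensor\chi_G) = \sum_k c_k(y)\,\delta_{[y,g_k]}\tensor\delta_{[g_k,y_n]}, \qquad c_k(y) = f_k(y)[\tilde a_n([g_k,y]) - \tilde a_n(y)] + \tilde a_n(y)[f_k(y)-f_k(y_n)]. \]

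The heart of the argument is then to show $\sup_y |c_k(y)|\to 0$, which rests on two uniform contraction estimates. Using $y_n=[y,\varphi^n(v)]$ and joint continuity of the bracket, together with $d(\varphi^n(v),\varphi^n(w))\leq\lambda^{-n}d(v,w)$ (since $v\sim_s w$ and $\varphi$ contracts on local stable sets), one gets $d(y,y_n)\leq C\lambda^{-n}$ uniformly in $y$; continuity of $f_k$ then yields $|f_k(y)-f_k(y_n)|\to 0$ uniformly. The crucial second estimate is that $[g_k,y]\in X^u(y)$ (from $[y,[g_k,y]]=y$), so $y$ and $[g_k,y]$ are unstably equivalent and within $\epsilon_X'$ in their common local unstable set; iterating the unstable contraction axiom gives $d(\varphi^{-n}(y),\varphi^{-n}([g_k,y]))\leq\lambda^{-n}\epsilon_X'\to 0$, and uniform continuity of $a$ (extended by zero past its compact support) delivers $|\tilde a_n(y)-\tilde a_n([g_k,y])|\to 0$ uniformly. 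This is the step I expect to be the main obstacle --- not because the estimate is hard once set up, but because one must be careful to identify which pairs of points become close under $\varphi^{-n}$ (the unstable direction contracts), and the partition of unity squares must be used to treat the case when only one of $\tilde a_n(y)$, $\tilde a_n([g_k,y])$ happens to be nonzero for a given $n$.

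Finally, for fixed $k$ the map $y \mapsto ([y,g_k],[g_k,y_n])$ is (approximately, as $n\to\infty$) the local product coordinate homeomorphism at $g_k$, hence is at most boundedly-many-to-one, so the operator $T_k:\delta_y\tensor\chi_G \mapsto c_k(y)\delta_{[y,g_k]}\tensor\delta_{[g_k,y_n]}$ has norm bounded by a constant times $\sup_y|c_k(y)|$. Summing over the finitely many $k$ in the partition yields $\|T\|\to 0$, completing the proof.
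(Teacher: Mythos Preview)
Your overall strategy matches the paper's: reduce to $a$ supported on a basic set, restrict to vectors $\delta_y\otimes\chi_G$, identify the basis vectors in the two sums, and then estimate the coefficient differences using contraction in the appropriate direction. The final two estimates you isolate (that $d(y,y_n)\to 0$ and $d(\varphi^{-n}(y),\varphi^{-n}[g_k,y])\to 0$) are exactly the paper's items (4) and (5), and your coefficient decomposition is the right one.

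The genuine gap is in your middle paragraph. You write $y_n=[y,\varphi^n(v)]$ ``by iterating the axiom $\varphi[x,y]=[\varphi(x),\varphi(y)]$'' and then deduce $[y_n,g_k]=[y,g_k]$ and $\varphi^n h^s\varphi^{-n}([g_k,y])=[g_k,y_n]$ from the bracket identities. But the bracket is only defined on pairs within $\epsilon_X$, and the axioms only assert equality when \emph{both} sides are defined. There is no reason whatsoever for $d(y,\varphi^n(v))<\epsilon_X$: the point $v$ lies in $X^u(Q)$, and $\varphi^n(v)$ wanders over $X$ with no relation to $y$. So $[y,\varphi^n(v)]$ is typically undefined, the iteration breaks down at the first step where the intermediate bracket fails to exist, and the subsequent cancellations $[[y,\varphi^n(v)],g_k]=[y,g_k]$ etc.\ are formal manipulations with undefined objects. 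The \emph{conclusions} you state are correct, but the argument does not establish them.

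The paper's fix is to avoid the symbol $[y,\varphi^n(v)]$ entirely and instead keep $y_n$ in the form $\varphi^n[\varphi^{-n}(y),v]$. To prove the two basis-vector identities, the paper shows that for every intermediate $0\leq m\leq n$ the points $\varphi^m[\varphi^{-n}(y),v]$ and $\varphi^{m-n}[g_k,y]$ remain within $\epsilon_X$ of each other (one contracts from $m=0$, the other from $m=n$, and the triangle inequality through $\varphi^{m-n}(y)$ closes the gap), so their bracket is defined at each stage; one then tracks this bracket from $m=0$ to $m=n$ using $\varphi$-invariance, arriving at the desired identities. This intermediate-iterate argument is the missing substance. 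A smaller point: for the norm bound at the end you want injectivity of $(y,k)\mapsto([y,g_k],[g_k,y_n])$, not merely bounded multiplicity; the paper recovers $y$ and $g_k$ explicitly via $g_k=[z,x]$ and $y=[x,z]$ from the image pair $(x,z)$.
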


\begin{proof}
It suffices to prove the result for $a$ supported in a basic set of the form $V^s(v,w,h^s,\delta)$ and further, since we are taking limits as $n$ goes to positive infinity, we may also assume that $v$ and $w$ are within $\ep'_X /2$ so that $h^s$ is given by the bracket map.

We observe that both operators $(1 \tensor \alpha_s^n(a)) W_G$ and $W_G (1 \tensor \alpha_s^n(a)) $ are zero on the orthogonal complement of $\ell^2(X^h(P,Q)) \tensor \cp \cdot \chi_{G}$. We consider $y$ in $\h = \ell^2(X^h(P,Q))$ and compute
\begin{eqnarray*}
(1 \tensor \alpha_s^n(a)) W_G( \delta_y \tensor \chi_{G}) & = & (1 \tensor \alpha_s^n(a)) \sum_{k} f_{k}(y) \delta_{[y, g_{k}]} \tensor \delta_{[g_{k}, y]} \\
& = & \sum_{k} f_{k}(y) a([\varphi^{-n}[g_{k}, y], v],\varphi^{-n}[g_{k}, y]) \delta_{[y, g_{k}]} \tensor \delta_{\varphi^{n}[\varphi^{-n}[g_{k}, y], v]} 
\end{eqnarray*}
and also
\begin{eqnarray*}
&& W_G (\alpha_s^n(a) \tensor 1) ( \delta_{y} \tensor \chi_{G}) = W_G a([\varphi^{-n}(y), v],\varphi^{-n}(y)) \delta_{\varphi^{n}[\varphi^{-n}(y), v]} \tensor \chi_{P} \\
&&  \qquad = \sum_{k} f_{k}(\varphi^{n}[\varphi^{-n}(y), v]) a([\varphi^{-n}(y), v],\varphi^{-n}(y)) \delta_{[\varphi^{n}[\varphi^{-n}(y), v], g_{k}]} \tensor \delta_{[g_{k}, \varphi^{n}[\varphi^{-n}(y), v]]}. 
\end{eqnarray*}

Let $\ep > 0$ be given. Let $M$ be an upper bound on the function $|a|$. We may find a constant $\ep_{1} > 0$ such that $| f_{k}(y) - f_{k}(z) | < \epsilon/2MK$, for all $y, z$ with $d(y, z)  < \epsilon_{1}$ and $1 \leq k \leq K$. In addition, we select $\epsilon_{1} > 0$ such that $|a([y,v],y) - a([z,v],z)| < \epsilon/2K$ for all $y, z$ with $z$ in $X^{u}(y, \epsilon_{1})$. We choose $N$ sufficiently large so that $\lambda^{-n} \epsilon_{X}/2 < \epsilon_{1}$ and $\lambda^{-n} \epsilon_{X} < \epsilon'_{X}$, for all $n \geq N$. With $n \geq N$, holding $k$ fixed for the moment, we make the claim that if the coefficient in either expression above:
\[ f_{k}(y) a([\varphi^{-n}[g_{k}, y], v],\varphi^{-n}[g_{k}, y]) \qquad \textrm{ or } \qquad f_{k}(\varphi^{n}[\varphi^{-n}(y), v]) a([\varphi^{-n}(y), v],\varphi^{-n}(y)) \]
is not zero, then we have 
\begin{enumerate}
 \item $[y, g_{k}] = [\varphi^{n}[\varphi^{-n}(y), v], g_{k}]$,
\item $ \varphi^{n}[\varphi^{-n}[g_{k},y], v] = [g_{k}, \varphi^{n}[\varphi^{-n}(y), v]]$,
\item the map sending $(y, g_{k})$ to $([y, g_{k}],  \varphi^{n}[\varphi^{-n}[g_{k},y], v] )$ is injective,
\item $\varphi^{-n}[g_{k}, y]$ is in $X^{u}(\varphi^{-n}(y), \epsilon_{1})$,
\item $d(\varphi^{n}[\varphi^{-n}(y), v], y) < \epsilon_{1}$.
\end{enumerate}

If $f_{k}(y) a([\varphi^{-n}[g_{k}, y], v],\varphi^{-n}[g_{k}, y])$  is non-zero, then $f_{k}(y)$ must be non-zero and this means that $y$ is in $B(g_{k}, \epsilon'_{X}/2)$. Moreover, from the choice of $\epsilon'_{X}$, we have that $[g_{k}, y]$ is in $X^{u}(y, \epsilon_{X}/2)$ and hence $\varphi^{-n}([g_{k}, y])$ is in $X^{u}(\varphi^{-n}(y) \lambda^{-n} \epsilon_{X}/2)$. In addition, we know that $a([\varphi^{-n}[g_{k}, y], v],\varphi^{-n}[g_{k}, y])$ is non-zero and this means that $\varphi^{-n}([g_{k}, y])$ is in $X^{u}(w, \epsilon'_{X}/2)$ and it follows that $\varphi^{-n}(y)$ is in $X^{u}(w, \epsilon'_{X}/2 + \lambda^{-n} \epsilon_{X}/2)$. Since $\lambda^{-n} \epsilon_{X} < \epsilon'_{X} < \epsilon_{X}/2$, we see that $[\varphi^{-n}(y), v]$ is also defined and is in $X^{s}(\varphi^{-n}(y), \epsilon_{X}/2)$. It follows that $\varphi^{n}[\varphi^{-n}(y), v]$ is in $X^{s}(y, \lambda^{-n}\epsilon_{X}/2 )$ and also in $B(g_{k}, \epsilon'_{X})$.

If the second expression, $f_{k}(\varphi^{n}[\varphi^{-n}(y), v]) a([\varphi^{-n}(y), v],\varphi^{-n}(y))$ is non-zero, then we must have that $a([\varphi^{-n}(y), v],\varphi^{-n}(y))$ is non-zero and so $\varphi^{-n}(y)$ is in $X^{u}(w, \epsilon'_{X}/2)$. Then we have $[\varphi^{-n}(y), v]$ is in $X^{s}(\varphi^{-n}(y), \epsilon_{X}/2)$ and hence
$\varphi^{n}[\varphi^{-n}(y), v]$ is in $X^{s}(y, \lambda^{-n} \epsilon_{X}/2)$. In addition, if the coefficient is non-zero, the $f_{k}$ term is non-zero and this means that this same point is in $B(g_{k}, \epsilon'_{X}/2)$ and hence, $y$ is in $B(g_{k}, \epsilon'_{X}/2 + \lambda^{-n} \epsilon_{X}/2)$. Since $\lambda^{-n} \epsilon_{X} < \epsilon'_{X}$, $[g_{k}, y]$ is in $X^{u}(y, \epsilon_{X}/2)$. 

To summarize, if either term is non-zero, then we have $[g_{k}, y]$ is defined and is in $X^{u}(y, \epsilon_{X}/2)$, $\varphi^{-n}(y)$ is in 
$X^{u}(w, \epsilon'_{X})$ and $[\varphi^{-n}(y), v]$ is defined and in $X^{s}(\varphi^{-n}(y), \epsilon_{X}/2)$. Parts $4$ and $5$ of the claim follow at once since $\lambda^{-n} \epsilon_{X}/2 < \epsilon_{1}$.

For any $0 \leq m \leq n$, we have 
\[ d(\varphi^{m-n}(y), \varphi^{m}[\varphi^{-n}(y), x]) \leq \lambda^{-m} d(\varphi^{-n}(y), [\varphi^{-n}(y), v]) \leq \lambda^{-m} \epsilon_{X}/2 \leq \epsilon_{X}/2, \]
and 
\[ d(\varphi^{m-n}(y), \varphi^{m-n}[g_{k}, y]) \leq \lambda^{-n+m} d(y, [g_{k}, y]) \leq  \lambda^{-n+m} \epsilon_{X}/2 \leq  \epsilon_{X}/2. \]
From the triangle inequality, we have 
\[ d( \varphi^{m}[\varphi^{-n}(y), v], \varphi^{m-n}[g_{k}, y]) \leq \epsilon_{X}. \]
This means that the bracket of these points is defined (in either order). First, taking bracket in the order given and using the $\varphi$-invariance of the bracket we have
\begin{eqnarray*}
[\varphi^{m}[\varphi^{-n}(y), v], \varphi^{m-n}[g_{k}, y]]  &  =  &  \varphi^{m}[ [\varphi^{-n}(y), v], \varphi^{-n}[g_{k}, y] ]  \\
& = & \varphi^{m}[ \varphi^{-n}(y), \varphi^{-n}[g_{k}, y] ].
\end{eqnarray*}
When $m=n$, the left hand side becomes
\[ [\varphi^{n}[\varphi^{-n}(y), v], \varphi^{n-n}[g_{k}, y]] = [  \varphi^{n}[\varphi^{-n}(y), v], [g_{k}, y]] = [  \varphi^{n}[\varphi^{-n}(y), v], y]] \]
while the right hand side is
\[ \varphi^{n}[ \varphi^{-n}(y), \varphi^{-n}[g_{k}, y] ] = \varphi^{n}(\varphi^{-n}(y)) = y \]
as $\varphi^{-n}[g_{k}, y]$ is in $X^{u}(\varphi^{-n}(y), \lambda^{-n} \epsilon_{X}/2)$. Now bracketing each with $g_{k}$ yields
\[ [y, g_{k}] = [ [  \varphi^{n}[\varphi^{-n}(y), v], y] , g_{k}] =  [ \varphi^{n}[\varphi^{-n}(y), v], g_{k}] \]
and we have established part 1 of the claim.

On the other hand, if we bracket in the other order, and again use the $\varphi$-invariance, we obtain
\[ [\varphi^{m-n}[g_{k}, y], \varphi^{m}[\varphi^{-n}(y), v]] = \varphi^{m}[\varphi^{-n}[g_{k},y], [\varphi^{-n}(y), v]] = \varphi^{m}[\varphi^{-n}[g_{k},y], v]. \]
Setting $m=n$, the left hand side is 
\[ [\varphi^{n-n}[g_{k}, y], \varphi^{n}[\varphi^{-n}(y), v] ] = [ [g_{k}, y], \varphi^{n}[\varphi^{-n}(y), v] ] = [ g_{k}, \varphi^{n}[\varphi^{-n}(y), v] \]
while the right is 
\[ \varphi^{n}[\varphi^{-n}[g_{k}, y], \varphi^{-n}(y) ]. \]
We have established the second part of the claim.

For the third part of the claim, let $x = [y, g_{k}]$ and $z =[g_{k}, \varphi^{n}[\varphi^{-n}(y), v]]$. We can recover $y$ and $g_{k}$ from $x$ and $z$ by observing that $[z, x] = g_{k}$ and  
\[ [x, z] = [[y, g_{k}], [g_{k}, \varphi^{n}[\varphi^{-n}(y), v]]] = [y, \varphi^{n}[\varphi^{-n}(y), v]] = y, \]
since $\varphi^{n}[\varphi^{-n}(y), v]$ is in $X^{s}(y, \epsilon)$.

We may conclude from the first two parts of our claim that
\begin{eqnarray*}
&& (1 \tensor \alpha_s^n(a)) W_G - W_G (\alpha_s^n(a) \tensor 1 ) ( \delta_{y} \tensor \chi_{G})  \\
&& \quad  = \sum_{k} (f_{k}(y) a([\varphi^{-n}[g_{k}, y], v],\varphi^{-n}[g_{k}, y]) - f_{k}(\varphi^{n}[\varphi^{-n}(y), v]) a([\varphi^{-n}(y), v],\varphi^{-n}(y))) \\ 
&& \hspace{10cm} \delta_{[y, g_{k}]} \tensor \delta_{[g_{k}, \varphi^{n}[\varphi^{-n}(y), v]]}.
\end{eqnarray*}

From part $3$, we see that the vectors appearing in the right hand side of the expression at the end of the last paragraph are pairwise orthogonal and the sums obtained for different values of $y$ are pairwise orthogonal. From this it follows that 
\begin{eqnarray*}
&& \| (1 \tensor \alpha_s^n(a)) W_G - W_G (\alpha_s^n(a) \tensor 1 ) \|^{2} \\
&& \qquad \qquad \qquad =  \sup_{y \in X^h(P,Q)} \sum_{k} | f_{k}(y) a([\varphi^{-n}[g_{k}, y], v],\varphi^{-n}[g_{k}, y])  \\
&& \qquad \qquad \qquad \qquad - f_{k}(\varphi^{n}[\varphi^{-n}(y), v]) a([\varphi^{-n}(y), v],\varphi^{-n}(y)) |^{2}.
\end{eqnarray*}
The estimate that, for fixed $k$ and $y$, 
\[ |f_{k}(y) a([\varphi^{-n}[g_{k}, y], v],\varphi^{-n}[g_{k}, y]) - f_{k}(\varphi^{n}[\varphi^{-n}(y), v]) a([\varphi^{-n}(y), v],\varphi^{-n}(y)) | < \epsilon/K \]
follows from the last two parts of our claim and standard techniques. This completes the proof.
\end{proof}

We next define
\[ \widehat{W_G} = \left( \begin{array}{cc} 
W_G & (1 - W_G W_G^{*})^{1/2} \\
-(1 -  W_G^{*} W_G)^{1/2} & W_G^{*} 
\end{array} \right) \]
which is a unitary operator. Moreover, it follows from part 4 of Lemma \ref{lemma:W} that it is in the multiplier algebra of $S(X, \varphi, Q) \tensor \mathcal{K}$.

\begin{lemma} \label{lemma:key2}
Let $a$ be in $S(X, \varphi, Q)$ and let $(\mathcal{F},G)$ be an 
$\ep$-partition. Then we have 
\[ \lim_{n \rightarrow + \infty} \| ((p_G (1 \tensor \alpha^n(a)) ) \tensor e_{1,1}) \widehat{W_G} - \widehat{W_G}(( \alpha^n(a) \tensor q_{G}) \tensor e_{1,1}) \| = 0. \]
Moreover, if $(\mathcal{F}\circ \varphi^{-1}, \varphi(G))$ is also an $\ep'_X$-partition, then $\widehat{W_{\varphi(G)}}$ is also in the multiplier algebra of $S(X, \varphi, Q) \tensor \mathcal{K}$ and the analogous result holds with $\widehat{W_{\varphi(G)}}$, $p_{\varphi(G)}$, and $q_{\varphi(G)}$.
\end{lemma}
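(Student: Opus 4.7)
The plan is to expand $\widehat{W_G}$ as a $2\times 2$ block matrix and compare the two sides entry-by-entry. Since $p_G = W_G W_G^\ast$ and $1\tensor q_G = W_G^\ast W_G$ are projections (by Lemma~\ref{lemma:W}(1)-(2)), the positive square roots in the definition of $\widehat{W_G}$ simplify to $(1-W_G W_G^\ast)^{1/2}=1-p_G$ and $(1-W_G^\ast W_G)^{1/2}=1-1\tensor q_G$. Carrying out the two matrix products, the difference
\[ ((p_G(1\tensor \alpha^n(a)))\tensor e_{1,1})\widehat{W_G} - \widehat{W_G}((\alpha^n(a)\tensor q_G)\tensor e_{1,1}) \]
has nonzero entries only in positions $(1,1)$, $(1,2)$, and $(2,1)$. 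The $(2,1)$ entry is exactly $(1-1\tensor q_G)(\alpha^n(a)\tensor q_G)$, which vanishes identically because $\alpha^n(a)\tensor 1$ commutes with the projection $1\tensor q_G$. The $(2,2)$ entry is zero by construction, so only the top row requires work.

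For the $(1,1)$ entry, $p_G(1\tensor \alpha^n(a))W_G - W_G(\alpha^n(a)\tensor q_G)$, I would invoke Lemma~\ref{lemma:key} directly to replace $(1\tensor \alpha^n(a))W_G$ with $W_G(\alpha^n(a)\tensor 1)$ up to a norm-vanishing error, and then use the partial isometry identity $p_G W_G = W_G W_G^\ast W_G = W_G$. Finally I would rewrite $W_G(\alpha^n(a)\tensor 1) = W_G(1\tensor q_G)(\alpha^n(a)\tensor 1) = W_G(\alpha^n(a)\tensor q_G)$, using that $W_G(1\tensor q_G)=W_G$ and that operators on disjoint tensor factors commute. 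For the $(1,2)$ entry $p_G(1\tensor \alpha^n(a))(1-p_G)$, I would expand $p_G = W_G W_G^\ast$ and apply the adjoint form of Lemma~\ref{lemma:key}, namely $\|W_G^\ast(1\tensor \alpha^n(a)) - (\alpha^n(a)\tensor 1)W_G^\ast\|\to 0$, to reduce this entry to $W_G(\alpha^n(a)\tensor 1)W_G^\ast(1-p_G)$ plus a vanishing error; then $W_G^\ast(1-p_G) = W_G^\ast - W_G^\ast W_G W_G^\ast = 0$ collapses this to zero in the limit.

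The substantive input is Lemma~\ref{lemma:key}; the remainder is careful bookkeeping. The main (minor) obstacle is simply verifying that the two approximate commutation estimates survive when wedged between the bounded operators $p_G$, $W_G^\ast$, $1-p_G$; since these are all uniformly bounded (indeed, contractions), the norm estimates propagate without difficulty. For the ``moreover'' clause, the hypothesis that $(\mathcal{F}\circ\varphi^{-1},\varphi(G))$ is an $\ep'_X$-partition means that Lemma~\ref{lemma:W} applies verbatim with $G$ replaced by $\varphi(G)$, so $\widehat{W_{\varphi(G)}}$ lies in the multiplier algebra of $S(X,\varphi,Q)\tensor \mathcal{K}$ and satisfies the same structural identities as $\widehat{W_G}$. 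Repeating the block-matrix argument above with $(G,p_G,q_G,W_G)$ replaced by $(\varphi(G),p_{\varphi(G)},q_{\varphi(G)},W_{\varphi(G)})$ then yields the analogous norm convergence.
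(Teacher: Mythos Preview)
Your argument is correct. The block-matrix computation is accurate, the $(2,1)$ and $(2,2)$ entries vanish exactly as you say, and your treatment of the $(1,1)$ and $(1,2)$ entries via Lemma~\ref{lemma:key} (and its adjoint) is sound; in particular $W_G^*(1-p_G)=W_G^*-W_G^*W_GW_G^*=0$ because $W_G$ is a partial isometry.

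Your route differs from the paper's in one interesting respect. The paper first invokes Lemma~\ref{lemma:asymp_abel} to commute $p_G$ past $1\tensor\alpha_s^n(a)$ asymptotically (using that $p_G\in S\tensor U$ and $\alpha_s^n(a)\in S$ act on the second factor through $U$ and $S$ respectively). Once that swap is made, the identities $(p_G\tensor e_{1,1})\widehat{W_G}=W_G\tensor e_{1,1}$ and $\widehat{W_G}((1\tensor q_G)\tensor e_{1,1})=W_G\tensor e_{1,1}$ hold \emph{exactly}, so the $(1,2)$ entry never appears and the whole statement reduces immediately to Lemma~\ref{lemma:key}. You instead bypass Lemma~\ref{lemma:asymp_abel} entirely, paying for this by handling the $(1,2)$ entry with the adjoint form of Lemma~\ref{lemma:key}. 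Your approach is slightly more self-contained (one fewer external lemma), while the paper's is a little cleaner once the preliminary commutation is granted. Both are short and the ``moreover'' clause goes through identically in either version.
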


\begin{proof}
First, notice that $p_G$ is in $S \tensor U$, while $\alpha^{n}(a)$ is in $S$. It follows from Lemma \ref{lemma:asymp_abel} that in the first term we have
\[ \lim_{n \rightarrow \infty} \| p_G(1 \tensor \alpha_s^n(a)) - (1 \tensor \alpha_s^n(a))p_G \| = 0. \]
So it suffices to prove the result after interchanging the order of $p_G$ and $1 \tensor \alpha_s^n(a)$.

It follows from the fact that $W_G W_G^{*} = p_G$ that the (new) first term above is
\begin{eqnarray*}
(((1 \tensor \alpha_s^n(a))p_G) \tensor e_{1,1}) \widehat{W_G} & = & ( (1 \tensor \alpha_s^n(a)) \tensor e_{1,1})(p_G \tensor e_{1,1}) \widehat{W_G} \\
& = & ((1 \tensor \alpha_s^n(a)) \tensor e_{1,1}) (W_G \tensor e_{1,1} ) \\
& = & (1 \tensor \alpha_s^n(a))W_G \tensor e_{1,1}.
\end{eqnarray*}
On the other hand, using the fact that $W_G^{*}W_G = 1 \tensor q_{G}$, the second term above is
\begin{eqnarray*}
\widehat{W_G}((\alpha_s^n(a) \tensor q_{G}) \tensor e_{1,1}) & = & \widehat{W_G} ((1 \tensor q_{G}) \tensor e_{1,1})(\alpha_s^n(a) \tensor 1) \tensor e_{1,1} ) \\
& = & (W_G \tensor e_{1,1}) ((\alpha_s^n(a) \tensor 1) \tensor e_{1,1} ) \\
& = &  W_G ((\alpha_s^n(a) \tensor 1) \tensor e_{1,1}).
\end{eqnarray*}
The first statement now follows at once from Lemma \ref{lemma:key} and the second statement follows from combining the first statement with part $3$ of lemma \ref{lemma:W}.
\end{proof}

Let us denote the map 
\[ (1 \tensor \overline{\pi}_u \tensor \overline{\pi}_s) \circ  (\delta \tensor 1) \circ \sigma_* \circ \varTheta:  R^S(X,\varphi,Q) \tensor \mathscr{S} \rightarrow \mathcal{B}(\h \tensor \h \tensor \ell^2(\mathbb{Z})) \]
by $\psi$. Observe that $\psi(a \cdot u^k \tensor z^l - 1)$ determines the extension
\begin{diagram}
0 & \rTo & R^S(X,\varphi,Q) \tensor \mathcal{K}(\ho) & \rTo & \mathcal{E^{\prime \prime}} & \rTo & R^S(X,\varphi,Q) \tensor \mathscr{S} & \rTo & 0,
\end{diagram}
representing the class $\varTheta \tensor_{R^S(X,\varphi,Q) \tensor \mathscr{S}} (\delta \tensor_{R^U(X,\varphi,P)} \Delta)$ in $KK^1(R^S(X,\varphi,Q) \tensor \mathscr{S},R^S(X,\varphi,Q))$. Therefore, using Lemma \ref{lemma:key2} and equivalence in $KK$-theory, we have
\begin{eqnarray*}
\psi(a \cdot u^k \tensor z^l - 1) & = & (\widehat{W_{\varphi(G)}} \tensor 1)^\ast (\psi(a \cdot u^k \tensor z^l - 1) \tensor e_{1,1})(\widehat{W_{\varphi(G)}} \tensor 1) \\
& = & (W_{\varphi(G)} \tensor 1)^\ast (\psi(a \cdot u^k \tensor z^l - 1))(W_{\varphi(G)} \tensor 1).
\end{eqnarray*}
Now, combining Lemma \ref{lemma:SU_cmpct}, Lemma \ref{SU_eventually_zero}, and Lemma \ref{lemma:key2}, we compute
\begin{eqnarray*}
(W_{\varphi(G)}^* \tensor 1)\mathfrak{U}(1 \tensor a \tensor 1)\mathfrak{U}^\ast(W_{\varphi(G)} \tensor 1) & = & \left\{ \begin{array}{cc} \mathfrak{U}(a \tensor 1 \tensor 1)\mathfrak{U}^\ast(1 \tensor q_G \tensor 1) & \textrm{ if } n \geq 0 \\ 0 & \textrm{ if } n < 0 \\ \end{array} \right.
\end{eqnarray*}
where equality is up to the ideal $R^S(X,\varphi,Q) \tensor \mathcal{K}(\ho)$. To further simplify notation, let us also define
\[ U = W_{\varphi(G)}^\ast ((u \tensor u)p_G v^\ast)W_{\varphi(G)}  \in \mathcal{B}(\h \tensor \h).\]

Using the notation and computations from the preceding paragraph together with the maps described in (\ref{74})-(\ref{76}) on page \pageref{74}, we obtain, for $a$ in $S(X,\varphi,Q)$,
\[ \psi(a \cdot u^k \tensor z^l - 1) = (U \tensor B^\ast)^{l+k}\mathfrak{U}(a \tensor 1 \tensor 1)\mathfrak{U}^\ast(1 \tensor B)^k - (U \tensor B^\ast)^k \mathfrak{U}(a \tensor 1 \tensor 1)\mathfrak{U}^\ast(1 \tensor B)^k \]
as a bounded operator on the Hilbert space $\h \tensor q_G \tensor \ell^2(\mathbb{N})$,  where we have replaced $\ell^2(\mathbb{Z})$ by $\ell^2(\mathbb{N})$ since the operator $\psi(a \cdot u^k \tensor z^l - 1)$ is zero on the subspace $\{ 1 \tensor q_G \tensor \delta_n \mid n < 0\}$. Therefore, note that $B$ is a one sided shift on $\ell^2(\mathbb{N})$.

We are left to show that we have the following isomorphism of extensions, where $\mathcal{E}^{\prime\prime\prime}$ is the $C^\ast$-algebra generated by the image of $\psi$ and the ideal $R^S(X,\varphi,Q) \tensor \mathcal{K}(\h \tensor \ell^2(\mathbb{N}))$,
\[ \scalebox{0.87}{
\begin{diagram}
0 & \rTo & R^S(X,\varphi,Q) \tensor q_G \tensor \mathcal{K}(\ell^2(\mathbb{N})) & \rTo & \mathcal{E^{\prime \prime\prime}} & \rTo^{\pi \qquad} & R^S(X,\varphi,Q) \tensor \mathscr{S} & \rTo & 0 \\
 & & \dTo^{\cong} & & \dTo^{\beta} & & \dEq & \\
0 & \rTo & R^S(X,\varphi,Q) \tensor \mathcal{K}(\ell^2(\mathbb{N})) & \rTo & R^S(X,\varphi,Q) \tensor C^\ast(B-1) & \rTo & R^S(X,\varphi,Q) \tensor \mathscr{S} & \rTo & 0. \\
\end{diagram}} \]
Indeed, the quotient map $\pi: \mathcal{E}^{\prime\prime\prime} \rightarrow R^S(X,\varphi,Q) \tensor \mathscr{S}$ is given on generators by
\[ U \tensor 1 \mapsto  u \tensor 1 \quad , \quad 1 \tensor B  \mapsto  u \tensor \overline{z} \quad , \quad \mathfrak{U}(a \tensor 1 \tensor 1)\mathfrak{U}^\ast \mapsto  a \tensor 1 \]
and the map $\beta$ is given on generators by
\[ U \tensor 1 \mapsto  u \tensor 1 \quad , \quad 1 \tensor B  \mapsto  u \tensor B \quad , \quad \mathfrak{U}(a \tensor 1 \tensor 1)\mathfrak{U}^\ast \mapsto  a \tensor 1. \]
The reader is left to show that $\beta$ is an isomorphism and that the above diagram commutes. The second extension represents the class $\tau^{R^S(X,\varphi,Q)}(\mathcal{T}_0)$, which is $KK$-equivalent to $1_{R^S(X,\varphi,Q)}$ by Bott periodicity. This completes the proof.

\section{Concluding remarks and questions}

\subsection{Existence of the duality classes}

As we saw in Section \ref{sec:delta}, the construction of the K-theory duality element did not require the expanding and contracting nature of the dynamics. An essential property was that the stable and unstable relations intersected at a countable set of points.  Recall that a transversal to a foliation is a set which meets each leaf in a countable set, so the condition that each stable equivalence class meets each unstable class in a countable set is a transversality condition.  Based on the example of transverse foliations, a notion of transverse groupoids has been suggested and it is hoped that the axioms will be sufficient for the construction of a K-theory duality class in $KK^i(C^*(\mathcal G_1), C^*(\mathcal G_2))$ when $\mathcal G_1$ and $\mathcal G_2$ are transverse groupoids.

On the other hand, the existence of the class $\Delta$ requires hyperbolicity.  This is what might be expected from the Dirac-dual Dirac approach to the Baum-Connes conjecture.  In that case, the construction of the dual Dirac element, which is the analog of our $\Delta$, requires the use of a non-positively curved space.  For such a space the geodesic flow will be hyperbolic.  It would be interesting to find fundamentally different types of conditions which would lead to a K-homology duality class, or to understand why there no other possibilities.

\subsection{Crossed products and dynamics}

The relation between a hyperbolic group acting (amenably) on its Gromov boundary and hyperbolic dynamical systems is an interesting subject.  One might hope that in general one could do as Bowen and Series did, and  recode the action of certain Fuchsian groups on their boundaries to obtain a single transformation with an associated Markov partition, hence one gets a subshift of finite type.  Spielberg showed that the Ruelle algebras for the subshift are isomorphic to crossed product algebras.  This was extended by Laca-Spielberg and Delaroche, but in those cases it was necessary to use the fact that the algebras satisfied the hypothesis of the Phillips-Kirchberg theorem, and show that their K-theories were the same, to deduce that they were isomorphic.  This suggest that the appropriate setting for relating amenable actions of hyperbolic groups on their boundaries to hyperbolic dynamics is via identifying the crossed product \csa s with Ruelle algebras of Smale spaces.

\subsection{Ruelle algebras as building blocks for algebras associated to diffeomorphisms of manifolds}
 
Let $f:M\to M$ be an Axiom A diffeomorphism of a compact manifold, as defined by Smale \cite{Sma}.  Recall that $M$ can be expressed as a union of submanifolds, $M \supset M_1 \supset \ldots \supset M_n$ where each $M_i \setminus M_{i+1}$ contains a basic set, $S_i$, such that $(f|S_i, S_i)$ is an irreducible Smale space.  Thus, for each $i$, we have a Ruelle algebra, $R^u_i$.  These algebras are determined by their K-theory, analogously to how spheres are determined by their homology groups.  It would be interesting to find a natural algebra associated to the diffeomorphism $f$ which could be constructed from these Ruelle algebras with additional algebraic data.  One possibility is to build the algebra as iterated extensions, with elements of Kasparov groups playing the role of $k$-invariants from topology.   

% This is the bibliography.
\newpage

\end{document}